\numberwithin{equation}{section}
\newtheorem{theorem}{Theorem}[section]
\newtheorem{lemma}[theorem]{Lemma}
\newtheorem{corollary}[theorem]{Corollary}
\theoremstyle{definition}
\newtheorem{example}[theorem]{Example}
\newtheorem{remark}[theorem]{Remark}
\newenvironment{assumption}[1]
  {\innercustomthm}
  {\endinnercustomthm}
\DeclareMathOperator*{\esssup}{ess\,sup}
\def\E{{\mathbb E}}
\def\R{{\mathbb R}}
\def\N{{\mathbb N}}
\def\Z{{\mathbb Z}}
\def\P{{\mathcal P}}
\def\W{{\mathcal W}}
\def\T{{\mathbb T}}
\def\div{\mathrm{div}}
\def\LSI{\eta}
\def\approx1{\zeta}
\title[Sharp uniform-in-time propagation of chaos]{Sharp uniform-in-time propagation of chaos}
\author{Daniel Lacker and Luc Le Flem}
\address{Department of Industrial Engineering \& Operations Research, Columbia University}
\email{daniel.lacker@columbia.edu}
\email{ll3240@columbia.edu}
\thanks{D.L.\ is partially supported by the AFOSR Grant FA9550-19-1-0291 and the NSF CAREER award DMS-2045328. L.L.\  is partially supported by the same NSF award.}
\begin{document}

\begin{abstract}
We prove the optimal rate of quantitative propagation of chaos, uniformly in time, for interacting diffusions. Our main examples are interactions governed by convex potentials and models on the torus with small interactions. We show that the distance between the $k$-particle marginal of the $n$-particle system and its limiting product measure is $O((k/n)^2)$, uniformly in time, with distance measured either by relative entropy, squared quadratic Wasserstein metric, or squared total variation. Our proof is based on an analysis of relative entropy through the BBGKY hierarchy, adapting prior work of the first author to the time-uniform case  by means of log-Sobolev inequalities.
\end{abstract}

\maketitle

\section{Introduction}

This paper studies interacting diffusion processes in $\R^d$ of the form
\begin{align}
d X_t^i = \bigg(b_0(t, X_t^i) + \frac{1}{n-1} \sum_{j=1,j\neq i}^n b(t,X_t^i, X_t^j)\bigg)dt + \sigma d W_t^i, \qquad i =1, \ldots, n,
\label{eq:intro-interacting-particle-system}
\end{align}
where $W^1, \ldots, W^n$ are independent $d$-dimensional Brownian motions and $\sigma > 0$ is scalar. 
Precise assumptions are deferred to Section \ref{se:main-results}.
Typically, as $n\to\infty$, the limiting behavior of this $n$-particle system is described in terms of the McKean-Vlasov equation
\begin{align}
d X_t = \left(b_0(t, X_t) + \langle \mu_t, b(t,X_t,\cdot)\rangle\right)dt + \sigma d W_t, \qquad \mu_t = \text{Law}(X_t),
\label{eq:intro-McKean-Vlasov}
\end{align}
or the associated nonlinear Fokker-Planck equation
\begin{align}
\partial_t\mu_t(x) = -\mathrm{div}_x\big((b_0(t,x) + \langle \mu_t,b(t,x,\cdot)\rangle ) \mu_t(x)\big) + (\sigma^2/2)\Delta_x\mu_t(x), \label{eq:intro-McKean-Vlasov-PDE}
\end{align}
where we abbreviate $\langle \mu_t,b(t,x,\cdot)\rangle = \int_{\R^d} b(t,x,y)\,\mu_t(dy)$.
The large-$n$ limit is formalized by the notion of \emph{propagation of chaos} originating in \cite{kac1956foundations,mckean1967propagation}. Suppose for this introduction that the above equations are well-posed, with the law of $(X^1_t,\ldots,X^n_t)$ being exchangeable at $t=0$ and thus also at each $t > 0$. Let $P^{n,k}_t=\mathrm{Law}(X^1_t,\ldots,X^k_t)$ denote the $k$-particle marginal. The problem of propagation of chaos is to show that $P^{n,k}_t \to \mu^{\otimes k}_t$ weakly as $n \to \infty$, for each $k\in \N$ and $t > 0$, where $\mu^{\otimes k}_t$ denotes the $k$-fold product measure (assuming that the same is true at $t=0$, which is trivially the case if the initial positions are i.i.d.). This is well known to be equivalent to the statement that the empirical measure
\[
L^n_t := \frac{1}{n}\sum_{i=1}^n\delta_{X^i_t}
\]
converges weakly in probability to $\mu_t$ as $n\to\infty$, for each $t > 0$. 

Over several decades of intensive research, propagation of chaos has been rigorously justified in a wide variety of contexts which we do not attempt to list here, instead referring to the comprehensive recent surveys \cite{ChaintronDiezI,ChaintronDiezII} as well as \cite{jabinwang-review}.
While the earliest results were qualitative in nature, recent work has succeeded in \emph{quantifying} propagation of chaos in various ways.
Let us first discuss some typical quantitative results which hold on bounded time intervals.
The classical synchronous coupling argument, going back to \cite{sznitman1991topics} in the case where $(b_0,b)$ are Lipschitz, gives a convergence rate for $\E[\W_2^2(L^n_t,\mu_t)]$ of the same order as if $L^n_t$ were an empirical measure of i.i.d.\ random variables, where $\W_2$ denotes the quadratic Wasserstein distance.
While this estimate deteriorates with the dimension, the same coupling argument yields the dimension-free $\W_2^2(P^{n,k}_t,\mu^{\otimes k}_t) = O(k/n)$, and we note also the recent work \cite{han2021class} that gives dimension-free estimates for $\E[d^2(L^n_t,\mu_t)]$ for certain metrics $d$ dominated by $\W_2$.
A recently popular approach is to estimate the relative entropy $H(P^{n,n}_t\,|\,\mu^{\otimes n}_t)$ of the full $n$-particle system; if it is shown to be bounded with respect to $n$, then the subadditivity of entropy yields $H(P^{n,k}_t\,|\,\mu^{\otimes k}_t) = O(k/n)$, or by Pinsker's inequality $\|P^{n,k}_t-\mu_t^{\otimes k}\|_{\mathrm{TV}}^2 =O(k/n)$. Relative entropy methods have gained popularity in part due to their ability to handle models with singular interactions relevant in physics, which were inaccessible by other methods \cite{bresch2020mean,jabin-wang-bounded,jabin-wang-W1inf}.

The recent paper \cite{LackerFoundation} of the first named author showed that the previous three estimates can actually be improved from $O(k/n)$ to $O((k/n)^2)$ in many cases, with a simple Gaussian example showing that the latter cannot be further improved, thus obtaining the sharp rate of propagation of chaos.
The broad class of models treated in \cite{LackerFoundation} includes bounded or uniformly continuous interaction functions, among others, but nothing too singular.
The novelty of the approach of \cite{LackerFoundation}, which is also based on relative entropy estimates, is its \emph{local} character, estimating $H^k_t:=H(P^{n,k}_t\,|\,\mu^{\otimes k}_t)$ hierarchically in terms of $H^{k+1}_t$ for each $k < n$ rather than estimating the \emph{global} quantity $H^n_t$. Interestingly, both of the estimates $H^n_t= O(1)$ and $H^k_t=O((k/n)^2)$ are typically sharp, which means that something is lost in using subadditivity to pass from global to local bounds.
Note by Pinsker's inequality that the latter estimate implies that $P^{n,k}_t$ has total variation distance at most $O(k/n)$ from the i.i.d.\ measure $\mu^{\otimes k}_t$, which is interesting to contrast with the well known result of Diaconis-Freedman \cite[Theorem 13]{diaconis1980finite} about finite exchangeable measures on general spaces; the Diaconis-Freedman bound, which is sharp at its level of generality, implies merely that the total variation distance between $P^{n,k}_t$ and \emph{some mixture} of i.i.d.'s is $O(k^2/n)$.

In this paper, we adapt the $O((k/n)^2)$  estimate of \cite{LackerFoundation} to be \emph{uniform in time}, under an additional assumption of a log-Sobolev inequality (LSI) for $\mu_t$, uniformly in $t$. 
Before discussing the results in more detail, let us briefly review some typical prior uniform-in-time results, which were all obtained by \emph{global} methods.
First, it is an important and well known fact that McKean-Vlasov equations may admit multiple invariant measures even when the $n$-particle system is uniquely ergodic for each $n$, which precludes a uniform-in-time convergence; see \cite{dawson1983critical,herrmann2010non} for examples. 
Early results on uniform-in-time propagation of chaos appeared in \cite{Malrieu2001,Malrieu2003}, based on the synchronous coupling approach combined with a uniform-in-$n$ LSI for the $n$-particle system, in the case of uniformly convex potentials:
\begin{align}
b_0(t,x)= -\nabla U(x), \quad b(t,x,y)=-\nabla W(x-y), \quad \nabla^2 U \succeq \alpha I, \ \nabla^2 W \succeq 0, \ \alpha > 0. \label{intro:potentials}
\end{align}
In particular, it is shown in \cite{Malrieu2001,Malrieu2003} that $\W_2^2(P^{n,k}_t,\mu^{\otimes k}_t)$  and $H(P^{n,k}_t \,|\, \mu^{\otimes k}_t)$ are both $O(k/n)$, uniformly in time.
This approach was extended in \cite{Malrieu2008} to relax the uniform convexity to mere convexity-at-infinity.
More recent developments have managed to obtain  similar rates under weaker convexity requirements, as long as the interaction strength is small or the temperature $\sigma$ sufficiently large \cite{Durmus2020,Moral2019,salem2020gradient,Salhi2017}.
A different but still global method is adopted in the recent paper \cite{Delarue2021} dealing with models on the torus $\T^d$ via analysis of a PDE (``master equation") set on $\P(\T^d)$, obtaining estimates like $\sup_{t \ge 0}|\E F(L^n_t) - F(\mu_t)|=O(1/n)$ for sufficiently smooth functionals $F$.
The recent papers \cite{guillin2021uniform,rosenzweig2021global} treat specific physically relevant models with singular interactions, again working globally and thus not obtaining our optimal rate. 

A noteworthy source of recent applications beyond physics is the analysis of large neural networks trained by stochastic gradient descent \cite{Bach2018,Mei2018,Sirignano2020}, which is well approximated by particle systems of the form \eqref{eq:intro-interacting-particle-system} and for which long-time behavior is particularly important.
On a more mathematical note, uniform-in-time propagation of chaos can also be used to derive the rate of convergence of $\mu_t$ to equilibrium as $t \to\infty$, as in \cite{Malrieu2008,Malrieu2001,Malrieu2003}.

The main contribution of this paper is to identify the sharp rate of propagation of chaos, along with a methodology that is amenable to further development. 
On purely qualitative grounds, though, we do not claim to cover any notable models for which propagation of chaos was not already known. As in \cite{LackerFoundation}, we state general results (Theorem \ref{th:main-theorem} and \ref{th:reverse}) under somewhat abstract assumptions, which highlight the key  a priori estimates one needs in order to implement our method. Most important among them are that $\mu_t$ obeys a transport-type inequality, which already appeared in \cite{LackerFoundation}, as well as a LSI, with constants uniform in time; the latter was not needed in the finite-time results of \cite{LackerFoundation}.
The main examples that we cover are convex-at-infinity  potentials as in \eqref{intro:potentials} with $\nabla^2W$ also being bounded from above, and certain models on the torus with $b_0 \equiv 0$ and $b(t,x,y)=K(x-y)$ for a vector field $K$ of small divergence.

The role of the LSI in our work warrants further discussion. 
Starting from \cite{Malrieu2001}, a common ingredient in a proof of uniform-in-time propagation of chaos is a uniform-in-$n$ LSI for the invariant measure of the $n$-particle system. 
The remarkable recent result of \cite[Theorem 3.7]{delgadino2021phase} shows that this uniform-in-$n$ LSI implies uniform-in-time propagation of chaos, with minimal additional assumptions.
See also the recent paper \cite{GuillinKinetic2021} on quantitative uniform-in-time propagation of chaos for kinetic models, which applies recent uniform LSIs for invariant measures developed in \cite{guillin2019uniform}.
Based similarly on a LSI for $P^{n,n}_t$ uniform in $n$ and $t$ (plus other non-trivial assumptions),  we obtain in Theorem \ref{th:reverse} an estimate on the reversed entropy $H(\mu^{\otimes k}_t\,|\,P^{n,k}_t) = O((k/n)^2)$, and note that this  optimal  rate $O((k/n)^2)$ cannot be recovered from the aforementioned results.
On the other hand, our first Theorem \ref{th:main-theorem} is perhaps more novel in that it proves the optimal rate $H(P^{n,k}_t\,|\,\mu^{\otimes k}_t)=O((k/n)^2)$ under essentially no assumptions on  the $n$-particle system itself, but instead we need a sufficiently high temperature and a LSI for the measure $\mu_t$ uniformly in $t$. See Theorem \ref{th:main-theorem} for the general result, and Corollary \ref{cor:convex-main} and \ref{co:torus} for the examples of convex interactions and small interactions on the torus, respectively. 
Among prior work, the closest to our approach appears to be \cite{guillin2021uniform}, which derives the uniform-in-time estimate $H(P^{n,n}_t\,|\,\mu^{\otimes n}_t)=O(1)$ for certain singular models on the torus by using similarly a LSI for $\mu_t$ which is uniform in $t$; by subadditivity, they deduce $H(P^{n,k}_t\,|\,\mu^{\otimes k}_t)=O(k/n)$ in \cite[Corollary 1]{guillin2021uniform}.

Let us briefly highlight the main new ideas of the method. We employ a well known relative entropy calculation, formally stated as follows. Suppose $(\mu^i_t)_{t \ge 0}$ is a probability measure flow solving the Fokker-Planck equation
\begin{align*}
\partial_t\mu^i_t = -\mathrm{div}(b^i_t\mu^i_t) + (\sigma^2/2)\Delta\mu^i_t,
\end{align*}
for some (time-dependent) vector field $b^i_t$, for $i=1,2$. Then
\begin{align}
\frac{d}{dt}H(\mu^1_t\,|\,\mu^2_t) &= \int \left((b^1_t - b^2_t) \cdot \nabla \log(d\mu^1_t/d\mu^2_t) - (\sigma^2/2)\big| \nabla \log(d\mu^1_t/d\mu^2_t)\big|^2 \right)\,d\mu^1_t. \label{intro:entropy}
\end{align}
See Lemma \ref{le:entropy-estimate} for a rigorous version.
For many purposes it is good enough to immediately bound the right-hand side of \eqref{intro:entropy} by $(1/2\sigma^2)\|b^1_t-b^2_t\|_{L^2(\mu^1_t)}^2$, which is actually precisely the time-derivative of the path-space relative entropy used in \cite{LackerFoundation}.
Here,  we avoid completing the square, instead using the LSI in the natural way to take advantage of the final term in \eqref{intro:entropy}.
More specifically, we apply \eqref{intro:entropy} with $b^1$ and $b^2$ respectively being the drifts of $P^{n,k}_t$ and $\mu^{\otimes k}_t$, the former identified using the BBGKY hierarchy. The $b^1-b^2$ term is estimated similarly to \cite{LackerFoundation}, and we ultimately obtain a differential inequality, central to our approach, of the form
\begin{align}
\frac{d}{dt}H_t^k \le - c_1 H_t^k + c_2\frac{k^3}{n^2} + c_3 k (H_t^{k+1} - H_t^k), \quad \text{where} \quad H^k_t := H(P^{n,k}_t\,|\,\mu^{\otimes k}_t), \label{intro:entropy-inequality}
\end{align}
for certain positive constants $c_1,c_2,c_3$ which do not depend on $(n,k)$. The key difference with \cite{LackerFoundation} is the first term, which stems from the LSI and provides the additional decay needed to obtain uniform-in-time bounds. Once \eqref{intro:entropy-inequality} is established and Gronwall's inequality is applied, the remainder of the proof proceeds by iterating the resulting integral inequality from $k$ to $n$, using the crude global estimate $\sup_t H^n_t =O(n)$ for the last step. 
Compared to the finite-time setting of \cite{LackerFoundation}, we face new difficulties in estimating the multiple integrals arising from this iteration in a sharp enough manner to produce the optimal exponent.

Two recent papers developed related quantitative estimates along the BBGKY hierarchy, using (weighted) $L^p$ norms rather than relative entropy, for non-exchangeable models in \cite{jabin2021mean} and for certain singular interactions in \cite{bresch2022new}. Lastly, the very recent \cite{Han2022} adapts the methods of \cite{LackerFoundation} to handle certain singular interactions. These results are all on finite time horizons.

In Section \ref{se:main-results} below we state precisely all of our main results. Sections \ref{se:sec-proof-main-theorem} and \ref{se:sec-proof-reversed-entropy} prove the two mains Theorems \ref{th:main-theorem} and \ref{th:reverse}, respectively, and Section \ref{se:sec-proof-corollaries} proves the corollaries.

\section{Main Result and Examples}\label{se:main-results}

The space of Borel probability measures on a metric space $E$ is denoted $\P(E)$. We use the notation $\langle \mu,f\rangle =\int_E f\,d\mu$ for integration.
For $\mu \in \P(E)$ and $k \in \N$, we write $\mu^{\otimes k}(dx_1,\ldots,dx_k)=\mu(dx_1)\cdots\mu(dx_k)$ for the product measure. We  use the notation $x=(x_1, \ldots, x_k)$ for general element $x\in E^k$. For example, if $\varphi:E^k\mapsto \R$, $\varphi(x)$ and $\varphi(x_1, \ldots, x_k)$ denote the same quantity. Similarly, $\mu(dx)$ and $\mu(dx_1,\ldots,dx_k)$ are equivalent notations for a measure $\mu\in\P(E^k)$.
For $\mu, \nu \in \P(\R^k)$, we define the relative entropy and the Fisher Information between $\mu$ and $\nu$ respectively by
\begin{align*}
H(\nu\,|\,\mu) := \int_{\R^k} \frac{d\nu}{d\mu} \log \frac{d\nu}{d\mu} d\mu, \qquad I(\nu\,|\,\mu) := \int_{\R^k} \left|\nabla \log \frac{d\nu}{d\mu} \right|^2 d\nu.
\end{align*}
We set $H(\nu\,|\,\mu):=\infty$ when $\nu\not\ll\mu$, and similarly $I(\nu\,|\,\mu):=\infty$ when $\nu\not\ll\mu$ or $\nabla \log d\nu/d\mu$ does not exist in $L^2(\nu)$.
Many measures on Euclidean space encountered in this paper are absolutely continuous with respect to Lebesgue measure. For such measures, we abuse notation by using the same letter to denote both a measure and its density, e.g., $\mu(dx)=\mu(x)dx$.
Finally, we define $C_c^\infty(\R^k)$ to be the space of infinitely differentiable functions with compact support on $\R^k$.

\subsection{General setup and main result}

Fix $d,n\in \N$. The $n$-particle system of interest \eqref{eq:intro-interacting-particle-system} is described by a weakly continuous flow of probability measures $(P^n_t)_{t \ge 0}$ on $(\R^d)^n$ satisfying the following Fokker-Planck equation, written in weak form:
For every $\varphi \in C^\infty_c((\R^d)^n)$ and $t \ge 0$,
\begin{align}
\langle & P^n_t - P^n_0,\varphi\rangle \label{eq:FokkerPlanck} \\
	& \ = \int_0^t \int_{(\R^d)^n}  \bigg[\sum_{i=1}^n\bigg(b_0(s, x_i) + \frac{1}{n-1} \sum_{j = 1, j\neq i}^n b(s, x_i, x_j)\bigg) \cdot \nabla_{x_i}\varphi(x) + \frac{\sigma^2}{2}\Delta \varphi(x)\bigg] P^n_s(dx)ds. \nonumber 
\end{align}
The mean field limit is described by a continuous flow of probability measures $(\mu_t)_{t \ge 0}$ on $\R^d$ satisfying the following nonlinear Fokker-Planck equation, again written in weak form:
For every $\varphi \in C^\infty_c(\R^d)$ and $t \ge 0$,
\begin{align}
\begin{split}
\langle \mu_t-\mu_0,\varphi\rangle &= \int_0^t \int_{\R^d}  \bigg[\bigg(b_0(s, x) + \langle \mu_s, b(s, x, \cdot)\rangle\bigg) \cdot \nabla \varphi(x) + \frac{\sigma^2}{2}\Delta \varphi(x)\bigg] \mu_s(dx)ds.
	\end{split} \label{eq:FokkerPlanck-nonlinear}
\end{align}
Recall that a function on a metric space is said to be \emph{locally bounded} if its restriction to any bounded set is bounded. A probability measure on $(\R^d)^n$ is \emph{exchangeable} if it is invariant under permutations of its $n$ coordinates.
Our first set of assumptions is technical in nature:

\begin{assumption}{\textbf{E}} \label{assumption:E}
{\ }
\begin{enumerate}
\item[(E.1)] We are given a scalar $\sigma > 0$, and Borel measurable functions $b_0 : [0,\infty) \times \R^d \to \R^d$ and $b : [0,\infty) \times \R^d \times \R^d \to \R^d$, where $b_0$ is locally bounded.
\item[(E.2)] There exists a weak solution $(\mu_t)_{t \ge 0}$ to the nonlinear Fokker-Planck equation \eqref{eq:FokkerPlanck-nonlinear} such that $b(t,x,\cdot) \in L^1(\mu_t)$ for all $(t,x)$, and $(t,x) \mapsto \langle \mu_t,b(t,x,\cdot)\rangle$ is locally bounded.
\item[(E.3)] There exists a weak solution $(P^n_t)_{t \ge 0}$ to the Fokker-Planck equation \eqref{eq:FokkerPlanck} such that $P^n_t$ is exchangeable for each $t > 0$. Moreover, for each $p,T > 0$,
\begin{align*}
\int_0^T\int_{(\R^d)^n}\big( | b(t,x_1,x_2)|^p + |\langle \mu_t,b(t,x_1,\cdot)|^2  \big)P^n_t(dx)dt &< \infty, \\
\sup_{t \in [0,T]} \int_{(\R^d)^n} \big( |b_0(t,x_1)|^2 + | b(t,x_1,x_2)|^2 \big) P^n_t(dx) &< \infty.
\end{align*}
\end{enumerate}
\end{assumption}

Note that we prefer to \emph{assume} the existence of $P^n_t$ and $\mu_t$, rather than placing assumptions on $(b_0,b)$ which imply existence. The assumptions on $(b_0,b)$ are thus mostly implicit, which makes our main result fairly general. Sections \ref{se:convex-study} and \ref{se:torus-study} give more concrete sufficient conditions, stated directly in terms of $(b_0,b)$. The assumptions of local boundedness in (E.1,2) and the $p$-integrability condition in (E.3) are purely technical, serving only to justify a relative entropy estimate (see Lemma \ref{le:entropy-estimate} below).
The next set of assumptions is the more essential one. 

\begin{assumption}{\textbf{A}} \label{assumption:A}
{\ }
\begin{enumerate}
\item[(A.1)] Log-Sobolev inequality (LSI): There exists a constant $\LSI >0$ such that 
\begin{align}
H(\nu \,|\, \mu_t) \leq \LSI  I(\nu \,|\, \mu_t),  \quad \forall \nu \in \P(\R^d), \  t \ge 0.
\label{eq:log-sobolev-inequality}
\end{align}
\item[(A.2)] Transport-type inequality: There exists $\gamma > 0$ such that
\begin{align}
\left| \langle \nu - \mu_t,  b(t, x, \cdot) \rangle \right|^2 \leq \gamma H(\nu \,|\, \mu_t), \quad \forall \nu \in \P(\R^d), \  x \in \R^d, \ t \ge 0.
\label{eq:transport_type_inequality}
\end{align}
\item[(A.3)] $L^2$-boundedness: We have
\begin{align}
M := \esssup_{t\geq 0}\int_{(\R^d)^n} \left|b(t, x_1, x_2) - \langle \mu_t, b(t, x_1, \cdot)\rangle \right|^2 P^n_t(dx) < \infty.
\label{eq:square_integrability}
\end{align}
\end{enumerate}
\end{assumption}

Define $P^k_t \in \P((\R^d)^k)$ to be the $k$-particle marginal of $P^n_t$. That is,  
\begin{align*}
\langle P^k_t,\varphi\rangle = \int_{(\R^d)^n} \varphi(x_1,\ldots,x_k) P^n_t(dx_1,\ldots,dx_n),
\end{align*}
for bounded measurable $\varphi : (\R^d)^k \to \R$.
For brevity, and because $n$ can be considered as fixed in the following non-asymptotic results, we write $P^k_t$ instead of $P^{n,k}_t$ as in the introduction.
The following is our first main result.

\begin{theorem}\label{th:main-theorem}
Suppose Assumptions \ref{assumption:E} and \ref{assumption:A} hold. Let $r_c := \frac{\sigma^4}{4\gamma \LSI }-1$.
\begin{enumerate}
\item Suppose that $r_c > 1$ and that there exists a constant $C_0>0$ such that 
\begin{align*}
H(P_0^k \,|\, \mu_0^{\otimes{k}}) \leq C_0 (k/n)^2, \quad \text{for all } k = 1, \ldots, n.
\end{align*}
Then there exists a constant $C>0$ depending only on $(\sigma, \gamma, \LSI ,M,C_0)$ such that
\begin{align*}
H(P_T^k \,|\, \mu_T^{\otimes{k}}) \leq C (k/n)^2, \quad \text{for all } T \ge 0, \ k=1, \ldots, n.
\end{align*}
\item Suppose that $0 < r_c \le 1$ and that for each $0<\epsilon_1 < \epsilon_2 < r_c$ there exists a constant $C_0^{\epsilon_1, \epsilon_2}>0$ such that 
\begin{align*}
H(P_0^k \,|\, \mu_0^{\otimes{k}}) \leq C_0^{\epsilon_1, \epsilon_2} k^{1+r_c - \epsilon_1} /n^{2r_c - \epsilon_2}, \quad \text{for all } k = 1, \ldots, n.
\end{align*}
Then, for any $0< \epsilon_1 <\epsilon_2 < r_c$, there exists a constant $C>0$ depending only on $(\sigma, \gamma, \LSI,M, \epsilon_1, \epsilon_2,C_0^{\epsilon_1, \epsilon_2}, C_0^{(\epsilon_2-\epsilon_1)/2, \epsilon_2 - \epsilon_1})$ such that:
\begin{align*}
H(P_T^k \,|\, \mu_T^{\otimes{k}}) \leq C k^{1 + r_c - \epsilon_1} /n^{2r_c - \epsilon_2}, \quad \text{for all }  T \ge 0, \ k=1, \ldots, n.
\end{align*}
\end{enumerate}
\end{theorem}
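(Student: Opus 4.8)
The plan is to establish the key differential inequality \eqref{intro:entropy-inequality} and then iterate it along the hierarchy from level $k$ to level $n$, closing the recursion with the crude bound $H^n_t = O(n)$. Let me sketch the steps.

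\medskip

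\noindent\textbf{Step 1: The entropy dissipation identity at level $k$.} First I would apply the relative entropy calculation (Lemma \ref{le:entropy-estimate}, the rigorous version of \eqref{intro:entropy}) with $\mu^1_t = P^k_t$ and $\mu^2_t = \mu^{\otimes k}_t$. The drift of $\mu^{\otimes k}_t$ in coordinate $i$ is $b_0(t,x_i) + \langle \mu_t, b(t,x_i,\cdot)\rangle$. The drift of $P^k_t$ in coordinate $i$ is, by the BBGKY hierarchy, the conditional expectation $b_0(t,x_i) + \frac{1}{n-1}\sum_{j\le k, j\neq i} b(t,x_i,x_j) + \frac{n-k}{n-1}\E_{P^{k+1}_t}[\, b(t,x_i,x_{k+1}) \mid x_1,\ldots,x_k\,]$. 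Subtracting, the difference of drifts $b^1 - b^2$ in coordinate $i$ is a small ``diagonal'' term of order $k/n$ plus the fluctuation term $\frac{n-k}{n-1}\big(\E_{P^{k+1}_t}[b(t,x_i,x_{k+1})\mid x_{\le k}] - \langle \mu_t, b(t,x_i,\cdot)\rangle\big)$. I would bound the $L^2(P^k_t)$ norm of this difference: the diagonal term contributes $O(k^2/n^2)\cdot$(a moment bound, controlled by $M$), and the fluctuation term is handled exactly as in \cite{LackerFoundation} — Jensen plus the transport-type inequality (A.2) bounds its square by $\gamma$ times a conditional relative entropy, whose expectation telescopes to $H^{k+1}_t - H^k_t$ via the chain rule for relative entropy. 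The crucial new ingredient, versus \cite{LackerFoundation}, is to \emph{not} complete the square in \eqref{intro:entropy}: instead keep $-(\sigma^2/2) I(P^k_t \mid \mu^{\otimes k}_t)$, use Young's inequality $a\cdot b \le \frac{\sigma^2}{4}|b|^2 + \frac{1}{\sigma^2}|a|^2$ on the cross term so that $\frac{\sigma^2}{4}$ of the Fisher information survives, then invoke the LSI (A.1) — which tensorizes, so $I(P^k_t\mid\mu^{\otimes k}_t) \ge \eta^{-1} H(P^k_t\mid\mu^{\otimes k}_t)$ — to turn the surviving Fisher information into $-\frac{\sigma^2}{4\eta} H^k_t$. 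Tracking the constants carefully (the transport inequality gives a factor $\gamma$, and there are $k$ coordinates, so the fluctuation term contributes $\le \frac{k\gamma}{\sigma^2}(H^{k+1}_t - H^k_t)$ up to the $(n-k)/(n-1)$ factor $\le 1$) yields precisely \eqref{intro:entropy-inequality} with $c_1 = \sigma^2/(4\eta)$ — wait, one must be careful: the cross term between the diagonal part and $\nabla\log$ also eats some Fisher information, so actually $c_1$ comes out as $\sigma^2/(4\eta)$ only after splitting the budget, and $c_1/c_3 = \sigma^4/(4\gamma\eta)$ up to lower order, which is where $r_c = \frac{\sigma^4}{4\gamma\eta} - 1$ enters.

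\medskip

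\noindent\textbf{Step 2: Gronwall and the integral recursion.} Applying Gronwall to \eqref{intro:entropy-inequality} with the $-c_1 H^k_t$ and $-c_3 k H^k_t$ terms absorbed into the exponential decay, I get
\begin{align*}
H^k_T \le e^{-(c_1 + c_3 k)T} H^k_0 + c_2 \frac{k^3}{n^2}\int_0^T e^{-(c_1+c_3 k)(T-s)}ds + c_3 k \int_0^T e^{-(c_1 + c_3 k)(T-s)} H^{k+1}_s\,ds.
\end{align*}
The first two terms are $O(k^2/n^2)$ (using the hypothesis on $H^k_0$ and $\int_0^T e^{-(c_1+c_3k)(T-s)}ds \le 1/(c_1+c_3 k) \le 1/(c_3 k)$, giving $c_2 k^3/(n^2 c_3 k) = O(k^2/n^2)$). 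The third term is the recursive one. I would then iterate this bound $m$ times, expressing $H^k_T$ as a sum over $j = 0,\ldots,m-1$ of $j$-fold iterated time integrals of the forcing terms at levels $k, k+1, \ldots, k+j$, plus a remainder term involving $H^{k+m}$, and finally take $m = n - k$ and bound the remainder using $\sup_t H^n_t = O(n)$ (which follows from (A.3)/$M$ via the same entropy identity applied at level $n$, where there is no higher level — the full system — so the drift difference is purely diagonal).

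\medskip

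\noindent\textbf{Step 3: Estimating the iterated integrals — the main obstacle.} This is where the real work lies, and where the dichotomy $r_c > 1$ versus $0 < r_c \le 1$ appears. Each iteration step picks up a factor $c_3 k' \int_0^T e^{-(c_1 + c_3 k')(T-s)}(\cdots)ds$ at level $k' = k + j$; the decay rate $c_1 + c_3 k'$ grows linearly in $k'$ while the forcing at level $k'$ is $O((k')^2/n^2)$. Summing the resulting series $\sum_j \big(\prod_{i=0}^{j-1}\frac{c_3(k+i)}{c_1 + c_3(k+i)}\big)\cdot\frac{(k+j)^2}{n^2}$ — the product telescopes roughly like $\prod (1 + \frac{c_1}{c_3(k+i)})^{-1} \approx (k/(k+j))^{c_1/c_3} = (k/(k+j))^{1+r_c}$ — one needs $\sum_j (k/(k+j))^{1+r_c}(k+j)^2/n^2$. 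When $r_c > 1$ this sum is dominated by its first terms and is $O(k^2/n^2)$; when $r_c \le 1$ the tail up to $j \sim n$ contributes, and one only gets $O(k^{1+r_c-\epsilon_1}/n^{2r_c - \epsilon_2})$, with the $\epsilon$'s absorbing logarithmic factors and the imprecision in the $\approx$ for the telescoping product (which requires care: $\log\prod(1+\frac{c_1}{c_3(k+i)})^{-1} = -\sum \log(1 + \frac{c_1}{c_3(k+i)})$, and comparing the sum to $\int$ costs a bounded error, giving $(k/(k+j))^{1+r_c}$ up to a constant — but iterating this $n$ times, constants compound, which is exactly why the $\epsilon$-losses are needed). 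One must also handle the remainder term at level $n$: after $n-k$ iterations its prefactor is $\prod_{i=0}^{n-k-1}\frac{c_3(k+i)}{c_1+c_3(k+i)} \approx (k/n)^{1+r_c}$, times $\sup_t H^n_t = O(n)$, giving $O(k^{1+r_c}/n^{r_c})$ — which in case (1), $r_c > 1$, must be checked to be $\le O(k^2/n^2)$; this holds since $k \le n$ forces $k^{1+r_c}/n^{r_c} = (k/n)^{r_c} k \le (k/n)^2 \cdot k^{?}$... actually one needs $k^{1+r_c}/n^{r_c} \le k^2/n^2$, i.e. $n^{2-r_c} \le k^{1-r_c}$, which for $r_c > 1$ reads $k^{r_c-1} \le n^{r_c - 2}$ — false in general, so in fact the remainder is \emph{not} negligible and instead one must choose the number of iterations $m$ \emph{optimally} (not $m = n-k$), balancing the growing forcing sum against the shrinking-prefactor-times-$O(n)$ remainder; the optimal $m$ is of order $n$ up to constants when $r_c>1$ but the precise bookkeeping of the geometric-type decay against the $O(n)$ tail is the crux. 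I expect Step 3 — the sharp estimation of these nested integrals and the optimization over the iteration depth — to be the main obstacle, considerably more delicate than the finite-horizon analogue in \cite{LackerFoundation} because the time-uniform exponential weights $e^{-(c_1+c_3k')(T-s)}$ interact non-trivially with the combinatorial sum over hierarchy levels.
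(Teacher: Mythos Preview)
Your Steps 1 and 2 are essentially correct and match the paper's approach (Lemmas \ref{le:entropy-estimate}, \ref{le:BBGKY}, \ref{le:first_entropy_estimate}(1)): the entropy estimate plus BBGKY plus the transport inequality and LSI do yield the differential inequality \eqref{intro:entropy-inequality}, and Gronwall followed by iterating all the way down to level $n$ is the right move. One minor point: the splitting of the drift difference into diagonal and fluctuation parts requires a Young-type inequality $(x+y)^2\le(1+\delta)x^2+(1+\delta^{-1})y^2$ with a free parameter $\delta>0$, and the effective ratio $c_1/c_3$ becomes $\alpha=(1+r_c)/(1+\delta)$ rather than exactly $1+r_c$; this $\delta$ must be tuned later.

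Your Step 3, however, contains both a computational error and a genuine missing idea. First the error: the sum $\sum_j (k/(k+j))^{1+r_c}(k+j)^2/n^2 = (k^{1+r_c}/n^2)\sum_j (k+j)^{1-r_c}$ is \emph{not} $O(k^2/n^2)$ when $r_c>1$. The tail $\sum_{\ell=k}^{n}\ell^{1-r_c}$ diverges unless $r_c>2$, and even then it is $O(k^{2-r_c})$, so the best you get from a single pass is $O(k^3/n^2)$ (for $r_c>2$) or $O(k^{1+r_c}/n^{r_c})$ (for $r_c\le 2$). The paper confirms this: see the end of Section \ref{se:pf-part(1)}, where the single-pass output is explicitly stated as the ``suboptimal $H^k_T\le Ck^3/n^2$''. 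Your attempted fix---optimizing the iteration depth $m$---does not help: the iteration always goes all the way to level $n$, and the remainder term you worried about is in fact fine (it gives $k^\alpha/n^{\alpha-1}\le k^3/n^2$ once $\alpha>3$).

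The missing idea is a \emph{bootstrap on the forcing term}. In your Step 1 you bounded the diagonal term by $M(k-1)^2/(n-1)^2$ via Cauchy--Schwarz; this is where the extra power of $k$ comes from. The paper's key observation (Lemma \ref{le:first_entropy_estimate}(2)) is that if one already knows $\sup_T H^3_T\le C_1/n^p$, then expanding the square gives $I=\frac{k-1}{(n-1)^2}I_{\mathrm{diag}}+\frac{(k-1)(k-2)}{(n-1)^2}I_{\mathrm{cross}}$, where $I_{\mathrm{diag}}\le M$ but the cross term $I_{\mathrm{cross}}\le\sqrt{\gamma M H^3_T}$ (via Cauchy--Schwarz, the transport inequality (A.2), and the chain rule again, now at level 3). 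This reduces the forcing from $c_2 k^3/n^2$ to $c_2' k^{3-p/2}/n^2$. One then reruns the \emph{entire} hierarchy argument with this improved forcing. For $r_c>2$ a single such rerun (with $p=2$) suffices to reach $O(k^2/n^2)$; for $0<r_c\le 2$ the paper packages the rerun as an inductive lemma (Lemma \ref{le:iteration_Lemma}) and applies it finitely many times, the exponent on $n$ in $H^3_T$ climbing along the sequence $rq_m=2r(1-2^{-m})$. This iterated self-improvement, not the nested time integrals themselves, is the ``considerably more delicate'' part you anticipated, and it is entirely absent from your plan.
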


Note that Theorem \ref{th:main-theorem} only applies at sufficiently \emph{high temperature}, as it does not cover the case $\sigma^4 \le 4\gamma \LSI $ (equivalently, $r_c \le 0$). For high enough temperature, $\sigma^4 > 8\gamma \LSI $, Theorem \ref{th:main-theorem}(1) achieves the optimal order $(k/n)^2$.
See \cite[Example 2.8 and Section 3]{LackerFoundation} for a simple example (with linear coefficients) showing that the exponent $2$ cannot be improved. In the intermediate regime $4\gamma \LSI  < \sigma^4 \le 8 \gamma \LSI $, we do not know if our rate is sharp.

The assumption $H(P^k_0\,|\,\mu^{\otimes k})\le C_0(k/n)^2$ holds trivially in the case of i.i.d.\ initial conditions $P^n_0=\mu_0^{\otimes n}$. See the main results of \cite{LackerGibbs} for natural families of Gibbs measures for which this assumption holds non-trivially. The non-degenerate noise $\sigma> 0$ will ensure that the measures $P^n_t$ and $\mu_t$ are absolutely continuous for each $t > 0$ (see Remark \ref{re:mu2-positive}), but they need not be at $t=0$; in particular, Theorem \ref{th:main-theorem} can accommodate Dirac initial conditions.

\begin{remark} \label{re:kuramoto}
For small temperature, uniform-in-time propagation of chaos can fail, for the simple reason that the mean field equation may admit multiple invariant measures which all satisfy a LSI. In other words, the presence of Assumptions \ref{assumption:A} and \ref{assumption:E} alone are not enough to guarantee uniform-in-time propagation of chaos, without an additional smallness condition.
For a simple example, consider the Kuramoto model 
\begin{align}
dX^i_t = \frac{K}{ n-1}\sum_{j=1}^n \sin(X^i_t-X^j_t)dt + dB^i_t, \label{def:Kuramoto}
\end{align}
where $K > 0$ is a constant, and particles take values in the circle $\R/2\pi\Z \cong [0,2\pi]$.
This $n$-particle system is uniquely ergodic.
The uniform measure is always an invariant measure for the corresponding mean field model; i.e., $\mu_t(dx) = dx/2\pi$ for all $t$ solves \eqref{eq:FokkerPlanck-nonlinear}. In the supercritical case $K > 1$, the mean field limit admits an infinite set $S_{\mathrm{MF}}$ of invariant measures, obtained as the rotations of a common density which is bounded from above and below away from zero. See \cite{bertini2010dynamical} for details. In particular, all invariant measures admit a LSI by the Holley-Stroock perturbation argument \cite[Proposition 5.1.6]{Bakry}. One easily checks that Assumptions \ref{assumption:A} and \ref{assumption:E} hold when $\mu_t=\mu$ for all $t \ge 0$ for some $\mu \in S_{\mathrm{MF}}$, but uniform-in-time propagation of chaos cannot hold  in the sense of Theorem \ref{th:main-theorem} when $K > 1$. If it did, it would lead to the absurd conclusion that the $1$-particle marginal of the unique invariant measure of the $n$-particle system converges to $\mu$, for each $\mu \in S_{\mathrm{MF}}$. However, we highlight the remarkable recent results of \cite[Section 4]{Delarue2021}, which show that uniform-in-time propagation of chaos still holds modulo rotations, if one initializes away from the uniform measure which is unstable when $K>1$.
\end{remark}

\begin{remark} \label{re:OttoVillani}
The entropy bounds of Theorem \ref{th:main-theorem} imply similar bounds in Wasserstein distance. 
To be precise, recall first the definition of the $p$-Wasserstein distance between two measures $\mu, \nu \in \P(\R^d)$, for $p \ge 1$:
\begin{align}
\W_p(\mu, \nu) = \inf_\pi \left( \int_{\R^d \times \R^d} |x - y|^p \pi(dx, dy)\right)^{1/p}, \label{def:Wasserstein}
\end{align}
where the infimum is over $\pi \in \P(\R^d \times \R^d)$ with marginals $\mu$ and $\nu$. 
By a famous theorem of Otto-Villani \cite{OttoVillani} (see also \cite[Theorem 8.12]{CLeonard2009}), the LSI \eqref{eq:log-sobolev-inequality} implies  the quadratic transport inequality
\begin{align}
\W_2^2(\nu,\mu_t) \le 4\LSI H(\nu\,|\,\mu_t), \quad \forall \nu \in \P(\R^d), \ t \ge 0. \label{eq:OttoVillani}
\end{align}
The quadratic transport inequality  tensorizes \cite[Proposition 1.9]{CLeonard2009}, in the sense that 
\begin{align*}
\W_2^2(\nu,\mu_t^{\otimes k}) \le 4\LSI H(\nu\,|\,\mu_t^{\otimes k}), \quad \forall k \in \N, \ \nu \in \P((\R^d)^k), \ t \ge 0.
\end{align*}
In particular, in case (1) of Theorem \ref{th:main-theorem}, $\sup_{t \ge 0}\W_2(P^k_t,\mu^{\otimes k}_t) =O(k/n)$.
\end{remark}

\begin{remark}
We have not optimized or reported a precise value of the constant $C$ in Theorem \ref{th:main-theorem}, which is complicated and not very informative in general. However, in the case $\sigma^4>12\gamma \LSI $, it is not difficult to track the constants in our proof to obtain 
\begin{align*}
& H(P_T^k \,|\, \mu_T^{\otimes k}) \leq \big(C_1 + C_2 e^{- \sigma^2 T/24 \LSI}\big) (k/n)^2, \quad \forall T, k, \\
\text{where } \ C_1 &= \frac{10000M\sigma^{4} \gamma^{2} \LSI  }{(1-12 \gamma \LSI  \sigma^{-4})^2}, \qquad C_2 = 1250 \left(C_0 + \frac{\sqrt{\gamma M C_0} \LSI  \sigma^{-4}}{1 - 12 \gamma \LSI \sigma^{-4}}\right) \frac{\sigma^8}{\gamma^2 \LSI ^2}.
\end{align*}
This reveals that the term containing the constant $C_0$, which controls the initial entropy, decays as $T \to \infty$. Thus, as one would expect, the effect of the initial condition disappears over long time horizons.
By being even more careful in the proof, we can obtain $n$-dependent constants $C_1(n)$ and $C_2(n)$ in place of $C_1$ and $C_2$, which are bounded in $n$ and have the advantage of vanishing as either $\LSI  \to 0$, $M \to 0$, or $\sigma \to \infty$, for any fixed $n$ (but not uniformly in $n$).
Similar bounds are possible but much more involved in the remaining case $4 \gamma \LSI < \sigma^4\le 12\gamma \LSI $.
\end{remark}

\subsection{Reversing the relative entropy estimate}

This section presents a similar result for $H(\mu_t^{\otimes{k}} \,|\, P_t^k)$. We need a similar set of assumptions as before, essentially with the roles of the measures $P^n$ and $\mu$ inverted. We also add a simplifying assumption that the interaction function $b$ is bounded, which is a major limitation; see Remark \ref{re:rev-bounded} for discussion.

\begin{assumption}{\textbf{R}} \label{assumption:R}
{\ }
\begin{enumerate}
\item[(R.1)] We are given a scalar $\sigma > 0$, and Borel measurable functions $b_0 : [0,\infty) \times \R^d \to \R^d$ and $b : [0,\infty) \times \R^d \times \R^d \to \R^d$, where $b$ is bounded and $b_0$ is locally bounded.
\item[(R.2)] There exists a weak solution $(P^n_t)_{t \ge 0}$ to the Fokker-Planck equation \eqref{eq:FokkerPlanck} such that $P^n_t$ is exchangeable for each $t > 0$.
\item[(R.3)] There exists a weak solution $(\mu_t)_{t \ge 0}$ to the nonlinear Fokker-Planck equation \eqref{eq:FokkerPlanck-nonlinear} with 
\begin{align}
\sup_{t\in[0, T]} \int_{\R^d} |b_0(t,x)|^2 \mu_t(dx) < \infty, \quad T>0.
\label{eq:integ-b_0-rev}
\end{align}
\item[(R.4)] Log-Sobolev inequality (LSI): There exists a constant $\LSI >0$ such that 
\begin{align}
H(\nu \,|\, P_t^n) \leq \LSI  I(\nu \,|\, P_t^n), \quad \forall \nu \in \P((\R^d)^n), \ t \geq 0, \ n\in \N.
\label{eq:LSI-rev}
\end{align}
\end{enumerate}
\end{assumption}

\begin{theorem} \label{th:reverse}
Suppose that Assumption \ref{assumption:R} holds. Define
\begin{align*}
p_c = \frac{\sigma^4}{8 \LSI \||b|^2\|_\infty}.
\end{align*}
\begin{enumerate}
\item Suppose that $p_c > 2$ and that there exists a constant $C_0>0$ such that
\begin{align*}
H(\mu_0^{\otimes{k}} \,|\, P_0^k) \leq C_0 (k/n)^2, \quad \text{for all } k = 1, \ldots, n.
\end{align*}
Then there exists a constant $C>0$ depending only on $(\sigma, \||b|^2\|_\infty, \LSI ,C_0)$ such that
\begin{align*}
H(\mu_T^{\otimes{k}} \,|\, P_T^k) \leq C (k/n)^2, \quad \text{for all } T \ge 0, \ k=1, \ldots, n.
\end{align*}
\item Suppose that $p_c \leq 2$ and that there exist constants $C_0>0$ and $\epsilon \in (0, p_c)$ such that
\begin{align*}
H(\mu_0^{\otimes{k}} \,|\, P_0^k) \leq C_0 (k/n)^{p_c-\epsilon}, \quad \text{for all } k = 1, \ldots, n.
\end{align*}
Then there exists a constant $C>0$ depending only on $(\sigma,\||b|^2\|_\infty,\LSI ,C_0,\epsilon)$ such that
\begin{align*}
H(\mu_T^{\otimes{k}} \,|\, P_T^k) \leq C (k/n)^{p_c-\epsilon}, \quad \text{for all } T \ge 0, \ k=1, \ldots, n.
\end{align*}
\end{enumerate}
\end{theorem}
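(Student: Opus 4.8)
\textbf{Proof plan for Theorem \ref{th:reverse}.}
The strategy mirrors the proof of Theorem \ref{th:main-theorem} but with the roles of $P^n$ and $\mu^{\otimes\cdot}$ interchanged, so I would first set $\widetilde H^k_t := H(\mu_t^{\otimes k}\,|\,P^k_t)$ and seek a differential inequality of the same shape as \eqref{intro:entropy-inequality}. The starting point is the entropy dissipation identity \eqref{intro:entropy} (rigorously, Lemma \ref{le:entropy-estimate}), applied with $\mu^1_t = \mu_t^{\otimes k}$ and $\mu^2_t = P^k_t$. For this I need the drift of the marginal flow $P^k_t$, which is obtained from the BBGKY hierarchy: the effective drift felt by the first $k$ coordinates under $P^k_t$ is, for the $i$-th particle, $b_0(t,x_i) + \frac{1}{n-1}\sum_{j\le k, j\ne i} b(t,x_i,x_j) + \frac{n-k}{n-1}\,\E_{P^n_t}[\,b(t,x_i,x_{k+1})\mid x_1,\dots,x_k]$, while the drift of $\mu_t^{\otimes k}$ has the $i$-th component $b_0(t,x_i)+\langle\mu_t,b(t,x_i,\cdot)\rangle$. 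Subtracting, the $b_0$ terms cancel, and the difference splits into a ``diagonal'' part of size $O(k/n)$ per coordinate (the $\frac{1}{n-1}\sum_{j\le k}$ term, plus the $\frac{n-k}{n-1}$ vs.\ $1$ discrepancy) and a ``fluctuation'' part $\frac{n-k}{n-1}(\E_{P^n_t}[b(t,x_i,x_{k+1})\mid x_{1:k}] - \langle\mu_t,b(t,x_i,\cdot)\rangle)$, which is exactly where the next level $\widetilde H^{k+1}_t$ enters.

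The second step is to bound the cross term $\int (b^1_t-b^2_t)\cdot\nabla\log(d\mu_t^{\otimes k}/dP^k_t)\,d\mu_t^{\otimes k}$. Using Young's inequality I would split off a factor $\frac{\sigma^2}{2}|\nabla\log(d\mu_t^{\otimes k}/dP^k_t)|^2$ with a carefully chosen small constant so that it is absorbed by the negative Fisher-information term in \eqref{intro:entropy}, and then I apply the LSI \eqref{eq:LSI-rev} for $P^n_t$ --- which tensorizes to $P^k_t$ by the standard tensorization of LSI, since $P^k_t$ is a marginal and the LSI constant is $n$-uniform --- to convert the remaining Fisher information into $-c_1\widetilde H^k_t$. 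The leftover $\|b^1_t-b^2_t\|^2_{L^2(\mu_t^{\otimes k})}$ is controlled by: (i) for the diagonal part, boundedness of $b$ gives a contribution $O(k^3/n^2)$ after summing over the $k$ coordinates and using that each diagonal sum has $O(k)$ terms of size $O(1/n)$; (ii) for the fluctuation part, I need a transport-type bound $|\E_{P^n_t}[b(t,x_i,x_{k+1})\mid x_{1:k}] - \langle\mu_t,b(t,x_i,\cdot)\rangle|^2$ integrated against $\mu_t^{\otimes k}$ controlled by $\widetilde H^{k+1}_t - \widetilde H^k_t$ (up to constants), which is the reversed-entropy analogue of the chain-rule/transport estimate in \cite{LackerFoundation}; boundedness of $b$ is what makes this work cleanly here, replacing Assumption (A.2). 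This yields
\begin{align*}
\frac{d}{dt}\widetilde H^k_t \le -c_1 \widetilde H^k_t + c_2\frac{k^3}{n^2} + c_3 k(\widetilde H^{k+1}_t - \widetilde H^k_t),
\end{align*}
with $c_1,c_3$ essentially $\sigma^2/\LSI$ up to absolute constants and $c_2$ proportional to $\||b|^2\|_\infty$; the threshold $p_c = \sigma^4/(8\LSI\||b|^2\|_\infty)$ emerges from the ratio $c_1/c_3$ after optimizing the Young's-inequality split.

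The third step is purely the iteration of this differential inequality, which is identical in structure to what is needed for Theorem \ref{th:main-theorem}: apply Gronwall to the $k$-th inequality to get $\widetilde H^k_t$ in terms of a time-integral of $\widetilde H^{k+1}$, iterate from $k$ up to $n$, and close the recursion with the crude global bound $\sup_t \widetilde H^n_t = O(n)$ (which here follows because $b$ is bounded, so the full-system reversed entropy grows at most linearly). The cases $p_c>2$ and $p_c\le 2$ correspond to whether the geometric-type factors accumulated in the iteration converge or produce an $\epsilon$-loss in the exponent, exactly as in Theorem \ref{th:main-theorem}(1) vs.\ (2). I expect the main obstacle to be step two, specifically establishing the reversed transport/chain-rule estimate linking the conditional-expectation fluctuation term to $\widetilde H^{k+1}_t - \widetilde H^k_t$: in the forward case one uses the data-processing/chain rule for $H(P^k\,|\,\mu^{\otimes k})$ together with Assumption (A.2), but with the entropy reversed the relevant decomposition $H(\mu^{\otimes(k+1)}\,|\,P^{k+1}) = H(\mu^{\otimes k}\,|\,P^k) + \int H(\mu\,|\,P^{k+1}(\cdot\mid x_{1:k}))\,d\mu^{\otimes k}$ must be combined with a Pinsker- or transport-type control that only the boundedness of $b$ (not a general transport inequality for $P^n_t$) can supply --- this is precisely why Assumption \ref{assumption:R} imposes bounded $b$, and getting the constants in this step sharp enough to preserve the exponent $2$ is the delicate part.
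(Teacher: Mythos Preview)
Your overall strategy is right, and the chain-rule/Pinsker argument for the fluctuation term $II$ is exactly what the paper does. However, you miss the key simplification that makes the reversed case \emph{easier} than the forward one, and as written your estimates would not reach the threshold $p_c>2$.

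The point is that the ``diagonal'' term
\[
I \;=\; \int_{(\R^d)^k}\Big|\tfrac{1}{n-1}\sum_{j=2}^k \big(b(t,x_1,x_j)-\langle\mu_t,b(t,x_1,\cdot)\rangle\big)\Big|^2\,\mu_t^{\otimes k}(dx)
\]
is integrated against the \emph{product} measure $\mu_t^{\otimes k}$, not against $P^k_t$ as in the forward proof. Conditionally on $x_1$, the summands for distinct $j$ are independent and centered, so the cross terms vanish and one gets $I\le \tfrac{k-1}{(n-1)^2}\,\||b|^2\|_\infty$, i.e.\ $I=O(k/n^2)$, not the $O(k^2/n^2)$ you obtain from the crude bound. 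After summing over the $k$ coordinates this produces a source term $O(k^2/n^2)$ in the differential inequality, not $O(k^3/n^2)$. The same observation at $k=n$ gives $\sup_t \widetilde H^n_t = O(1)$, not $O(n)$: with $II=0$ one has $\frac{d}{dt}\widetilde H^n_t \le -c_1 \widetilde H^n_t + O(1)$, and Gronwall yields a uniform $O(1)$ bound.

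These two improvements are what make the iteration close at the sharp threshold $p_c>2$ directly, with \emph{no} need for the inductive sharpening (the analogue of Lemma~\ref{le:iteration_Lemma}). With your stated estimates---source term $k^3/n^2$ and $\widetilde H^n_t=O(n)$---the boundary term $\tilde A_k^{n-1}(T)\widetilde H^n_T$ is of order $k^\alpha/n^{\alpha-1}$, which is $\le C(k/n)^2$ only when $\alpha\ge 3$, so you would need $p_c>3$ on a first pass and then an extra iteration; but that iteration in the forward proof relies on bounding $I_{\mathrm{cross}}$ via Assumption~(\ref{assumption:A}.2), which you do not have here. (In fact $I_{\mathrm{cross}}=0$ under $\mu^{\otimes 3}$ for the same independence reason---so the detour is avoidable, but you have not identified it.)

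One smaller correction: the LSI for $P^n_t$ does not \emph{tensorize} to $P^k_t$---tensorization is a statement about product measures. Rather, it \emph{marginalizes}: given $\nu\in\P((\R^d)^k)$, extend it to $\tilde\nu\in\P((\R^d)^n)$ by $d\tilde\nu/dP^n_t(x_1,\dots,x_n)=d\nu/dP^k_t(x_1,\dots,x_k)$, and apply \eqref{eq:LSI-rev} to $\tilde\nu$.
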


One can compare Theorem \ref{th:reverse} with Theorem \ref{th:main-theorem} by identifying $\gamma = 2\||b|^2\|_\infty$, noting that \eqref{eq:transport_type_inequality} holds with this constant by Pinsker's inequality.
The advantage of Theorem \ref{th:reverse} is that it applies no matter the value of $p_c = \sigma^4/4\gamma \LSI = 1+r_c$, whereas Theorem \ref{th:main-theorem} applies only when $p_c > 1$. Moreover, in the range $1 < p_c \le 2$ the bound $O((k/n)^{p_c-\epsilon})$ obtained in Theorem \ref{th:reverse} is better than the bound $O(k^{p_c-\epsilon_1}/n^{2(p_c-1)-\epsilon_2})$ of Theorem \ref{th:main-theorem}. When $p_c > 2$, the bound of $O((k/n)^2)$ obtained in each theorem is the same.
We do not know if the exponent $p_c-\epsilon$ is sharp in Theorem \ref{th:reverse}(2).
Let us note, similarly to Remark \ref{re:OttoVillani}, that the results of Theorem \ref{th:reverse} imply the bound on Wasserstein distance, by marginalizing the LSI \eqref{eq:LSI-rev}  and using the Otto-Villani theorem to  get $\W_2^2(\mu_t^{\otimes k},P^k_t) \le 4\LSI H(\mu_t^{\otimes k} \,|\, P^k_t)$ for $t \ge 0$.

\begin{remark} \label{re:rev-bounded}
The assumption in Theorem \ref{th:reverse} that $b$ is bounded is difficult to relax. It could likely be generalized to the following analogue of Assumption (\ref{assumption:A}.2):
\begin{align*}
\left| \langle \mu_t - P^{k+1|k}_{t,x},  b(t, x_1, \cdot) \rangle \right|^2 \leq \gamma H\big(\mu_t  \,|\, P^{k+1|k}_{t,x}\big), \  \  \forall 1 \le k < n, \ x=(x_1,\ldots,x_k) \in (\R^d)^k, \ t \ge 0,
\end{align*}
where $P^{k+1|k}_{t,x}(dx_{k+1})$ denotes the conditional law of $X^{k+1}_t$ given $(X^1_t,\ldots,X^k_t)=x$ under $P^n_t$.
These conditional measures do not seem tractable enough to enable a proof of functional inequalities of this form beyond the case of bounded $b$; cf.\ \cite[Remark 4.11]{LackerFoundation}.
\end{remark}

These main theorems give  recipes for quantitative uniform-in-time propagation of chaos, and the rest of Section \ref{se:main-results} describes concrete situations in which they apply. 
There are conceivably many situations in which these conditions can be checked, on a case-by-case basis. We give two somewhat general classes of examples below. The first deals with convex potentials, for which the well known Bakry-Emery framework yields log-Sobolev inequalities along dynamics. The second deals with a class of models set on the torus, where the LSI can be obtained using the Holley-Stroock perturbation lemma after showing that the density of $\mu_t$ is bounded from above and below away from zero uniformly in time.

\subsection{Convex potentials}\label{se:convex-study}

Our first Corollary \ref{cor:convex-main} provides a sharper rate of convergence than was previously known for the extremely well-studied case of convex potentials. We impose similar assumptions to \cite{Malrieu2008}, albeit with more restrictions on the interaction potential $W$.   We write $\succeq$ to denote positive definite (Loewner) order.

\begin{assumption}{\textbf{C}} \label{assumption:C}
Assume $b_0(t,x) = - \nabla U(x)$ and $b(t, x, y) = - \nabla W(x-y)$, where $U$ and $W$ are twice continuously differentiable functions satisfying the following:
\begin{enumerate}
\item[(C.1)] We have $\nabla^2(U+W) \succeq \alpha I$ for some $\alpha >0$, and each function $\psi=U$ and $\psi=W$ is convex at infinity in the sense that there exist constants $c_1^\psi,c_2^\psi \ge 0$ for which
\begin{align*}
(x-y) \cdot (\nabla \psi(x) - \nabla \psi(y)) \ge c_1^\psi|x-y|^2 - c_2^\psi, \qquad \forall x,y \in \R^d.
\end{align*}
We require \emph{uniform} convexity at infinity for $U$, in the sense that $c_1^U > 0$.
\item[(C.2)] There exist $C_U, p_U  > 0$ such that  $|\nabla U(x) | \le C_U(1 + |x|^{ p_U} )$ for all $x \in \R^d$.
\item[(C.3)] $W$ is even and $\min(L,R) < \infty$, where we define $R : =\||\nabla W|\|_\infty$ and $L= \sup_{x \neq y}|\nabla W(x)-\nabla W(y)|/|x-y|$.
That is, $\nabla W$ is either bounded or Lipschitz (or both).
\end{enumerate}
\end{assumption}

\begin{corollary}\label{cor:convex-main}
Suppose Assumption \ref{assumption:C} holds.
Let $\mu_0 \in \P(\R^d)$ satisfy the LSI 
\begin{align*}
H(\nu \,|\, \mu_0)\leq (\LSI _0/4) I(\nu \,|\, \mu_0), \quad \forall \nu \in \P(\R^d),
\end{align*}
for some $\LSI_0 > 0$.
Let $P^n_0 \in \P((\R^d)^n)$ be exchangeable and have finite moments of every order.
Then there exists a unique solution $(P^n_t)_{t \ge 0}$ of the Fokker-Planck equation \eqref{eq:FokkerPlanck} starting from $P^n_0$, and there exists a solution $(\mu_t)_{t \ge 0}$ of the nonlinear Fokker-Planck equation \eqref{eq:FokkerPlanck-nonlinear} starting from $\mu_0$, unique among the class of solutions satisfying $\int_0^T \int_{\R^d} |x|^p\mu_t(dx)dt < \infty$ for all $T,p > 0$.
Moreover, the following hold:
\begin{enumerate}
\item Assumptions \ref{assumption:A} and \ref{assumption:E} are satisfied, with $\LSI =\max(\LSI _0/4,\sigma^2/4\alpha)$, $\gamma=\min(4\LSI L^2,2R^2)$, and $M$ bounded by a finite constant depending only on $L$, $R$, $C_U$, and $\int_{(\R^d)^n}|x_1|^2P^n_0(dx)$. In particular, Theorem \ref{th:main-theorem} applies with
\begin{align*}
r_c &= \frac{\sigma^4}{4\gamma \LSI } - 1 = \max\Big(\min\Big(\frac{\sigma^4}{ \LSI _0^2 L^2},\, \frac{\alpha^2}{ L^2}\Big), \, \min\Big(\frac{\sigma^4}{2\LSI _0 R^2},\, \frac{\sigma^2 \alpha}{ 2 R^2}\Big) \Big) - 1.
\end{align*}
\item If $R < \infty$ and (C.1) holds with $c^U_2=c^W_2=c^W_1=0$ and $c^U_1=\alpha$ (i.e., $U$ is $\alpha$-uniformly convex and $W$ is convex), then Assumption \ref{assumption:R} is satisfied with $\LSI = \max(\eta_0/4,\sigma^2/4\alpha)$, and Theorem \ref{th:reverse} applies with
\begin{align*}
p_c = \min\Big(\frac{\sigma^4}{2\eta_0 R^2},\frac{\sigma^2\alpha}{2 R^2}\Big).
\end{align*}
\end{enumerate}
\end{corollary}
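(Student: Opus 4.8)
The plan is to reduce everything to the abstract hypotheses of Theorems \ref{th:main-theorem} and \ref{th:reverse}: for part (1) I will check Assumptions \ref{assumption:E} and \ref{assumption:A} with $\LSI=\max(\LSI_0/4,\sigma^2/4\alpha)$, $\gamma=\min(4\LSI L^2,2R^2)$ and a finite $M$, then substitute into $r_c=\sigma^4/(4\gamma\LSI)-1$ and unwind the nested $\max/\min$; for part (2) I will check Assumption \ref{assumption:R}, the only genuinely new point being the $n$-uniform LSI \eqref{eq:LSI-rev}. Before the functional inequalities I would dispose of the well-posedness claims: under (C.1)--(C.3) the drifts $-\nabla U,-\nabla W$ are locally Lipschitz ($U,W\in C^2$), $-\nabla U$ has polynomial growth by (C.2), and $-\nabla W$ is bounded or globally Lipschitz by (C.3); the convexity-at-infinity in (C.1) supplies a one-sided Lipschitz/Lyapunov bound precluding explosion, giving global strong existence for the $n$-particle system (hence a solution of \eqref{eq:FokkerPlanck}), while the McKean--Vlasov equation is solved by Sznitman's fixed point (in $\W_2$ if $L<\infty$, or exploiting the nondegenerate noise if only $R<\infty$) and is unique in the stated moment class. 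The by-product I use repeatedly is a family of \emph{uniform-in-time} moment bounds $\sup_{t\ge0}\int|x|^{2p}P^n_t(dx)<\infty$ and $\sup_{t\ge0}\int|x|^{2p}\mu_t(dx)<\infty$ for all $p$, obtained by applying the respective generators to $x\mapsto|x|^{2p}$ and using that $U+W\ast\mu_t$ is convex at infinity with modulus $c_1^U+c_1^W>0$ uniformly in $t$ (a localization argument makes this rigorous). These bounds give the integrability requirements of (E.3) and the finiteness of $M$ in (A.3), with the stated dependence on $C_U$ and $\int|x_1|^2P^n_0(dx)$.

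The heart of the argument is the uniform-in-time LSI. I would view $(\mu_t)$ as generated by the frozen-coefficient operators $\mathcal L_t f=\tfrac{\sigma^2}{2}\Delta f-\nabla V_t\cdot\nabla f$ with $V_t:=U+W\ast\mu_t$, and use propagation of log-Sobolev along a curvature-bounded flow: if $\mu_0$ satisfies the $(\LSI_0/4)$-LSI and each $\mathcal L_t$ satisfies $\mathrm{CD}(\rho,\infty)$ with $\rho=2\alpha/\sigma^2>0$, then $\mu_t$ satisfies the $\max(\LSI_0/4,1/(2\rho))=\max(\LSI_0/4,\sigma^2/4\alpha)$-LSI for every $t$ (Bakry--Émery together with the Gr\"onwall estimate for the LSI constant along the flow, cf.\ \cite{Malrieu2003,Bakry}). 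When $U$ is $\alpha$-uniformly convex and $W$ is convex --- the hypotheses of part (2) --- the curvature bound is immediate from $\nabla^2V_t(x)=\nabla^2U(x)+\int\nabla^2W(x-y)\mu_t(dy)\succeq\alpha I$. Under the mere convex-at-infinity hypotheses of part (1) the effective potential $V_t$ need not be globally convex, and I would instead obtain the $\LSI$-LSI by decomposing $V_t$ as a uniformly convex potential plus a correction that is controlled --- in Hessian or in oscillation/increment --- using the bound $\min(L,R)<\infty$, combining the Bakry--Émery estimate on the convex part with a Holley--Stroock-type correction \cite{Bakry} that the uniform tail estimates on $\mu_t$ keep uniform in $t$. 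For part (2) the needed LSI is for the $n$-particle laws $P^n_t$, uniformly in $n$ and $t$: the $n$-particle potential $\Phi_n(x)=\sum_iU(x_i)+\tfrac1{2(n-1)}\sum_{i\neq j}W(x_i-x_j)$ satisfies, since $W$ is even and convex and $U$ is $\alpha$-convex,
\begin{align*}
v^{\top}\nabla^2\Phi_n(x)\,v=\sum_i v_i^{\top}\nabla^2U(x_i)v_i+\tfrac1{2(n-1)}\sum_{i\neq j}(v_i-v_j)^{\top}\nabla^2W(x_i-x_j)(v_i-v_j)\ \ge\ \alpha|v|^2,
\end{align*}
i.e.\ $\nabla^2\Phi_n\succeq\alpha I$ uniformly in $n$; tensorizing the initial LSI and propagating along the $n$-particle flow yields \eqref{eq:LSI-rev} with $\LSI=\max(\LSI_0/4,\sigma^2/4\alpha)$.

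The remaining items are comparatively routine. For (A.2), with $b(t,x,\cdot)=-\nabla W(x-\cdot)$ and $\int(\nu-\mu_t)=0$: if $R<\infty$ then $|\langle\nu-\mu_t,b(t,x,\cdot)\rangle|\le R\|\nu-\mu_t\|_{\mathrm{TV}}\le R\sqrt{2H(\nu\,|\,\mu_t)}$ by Pinsker, giving $\gamma=2R^2$; if $L<\infty$ then the same quantity is $\le L\,\W_1(\nu,\mu_t)\le L\,\W_2(\nu,\mu_t)\le 2L\sqrt{\LSI H(\nu\,|\,\mu_t)}$ by the Otto--Villani inequality \eqref{eq:OttoVillani} applied to the $\LSI$-LSI just established, giving $\gamma=4\LSI L^2$, and one keeps the smaller constant. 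For (A.3), $b(t,x_1,x_2)-\langle\mu_t,b(t,x_1,\cdot)\rangle=\int[\nabla W(x_1-y)-\nabla W(x_1-x_2)]\mu_t(dy)$ is bounded by $2R$ (if $R<\infty$) or by $L(|x_2|+\int|y|\mu_t(dy))$ (if $L<\infty$), so the uniform moment bounds give the finite $M$. Conditions (E.1)--(E.3) and (R.1)--(R.3) follow from $(b_0,b)$ being $C^1$ with the stated growth together with exchangeability of $P^n_t$ (inherited from $P^n_0$ and the symmetry of the dynamics) and the moment bounds; for part (2) one additionally notes $\||b|^2\|_\infty=R^2<\infty$. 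Substituting $\LSI$ and $\gamma$ into $r_c$ gives the displayed formula, and $p_c=\sigma^4/(8\LSI\||b|^2\|_\infty)$ with $\||b|^2\|_\infty=R^2$ gives the formula in part (2).

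The step I expect to be the main obstacle is the uniform-in-time LSI for $\mu_t$ in the convex-at-infinity regime of part (1). In the globally convex case Bakry--Émery propagates along the flow with no loss, but here one must control the non-convex part of $V_t=U+W\ast\mu_t$ \emph{uniformly in $t$} --- precisely where the a priori tail estimates on $\mu_t$ and the hypothesis $\min(L,R)<\infty$ enter --- while still landing on the clean constant $\max(\LSI_0/4,\sigma^2/4\alpha)$, which the naive perturbation bound does not obviously yield; recovering it likely requires exploiting that the asymptotic curvature is governed by $\alpha$ through $\nabla^2(U+W)\succeq\alpha I$ together with a Rothaus-type tightening of a defective LSI by the spectral gap. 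A secondary technical nuisance is making the uniform-in-time moment propagation fully rigorous, which I would handle by a stopping-time/localization argument.
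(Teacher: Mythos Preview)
Your overall strategy matches the paper's: verify Assumptions \ref{assumption:E}, \ref{assumption:A}, \ref{assumption:R} by citing \cite{Malrieu2008} for well-posedness and uniform moments, use Pinsker (if $R<\infty$) or Kantorovich duality plus Otto--Villani (if $L<\infty$) for (A.2), and the moment bounds for (A.3). Your treatment of part~(2), including the Hessian calculation $\nabla^2\Phi_n\succeq\alpha I$ for the $n$-particle potential and the ensuing propagation of the LSI, is correct and in fact more explicit than the paper's written proof, which addresses only part~(1).

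The one substantive divergence is the LSI for $\mu_t$ in part~(1), and the obstacle you flag is precisely where your proposed route would fall short. You work via the \emph{linearized} Bakry--\'Emery condition $\nabla^2(U+W\ast\mu_t)\succeq\alpha I$, correctly observe that this need not hold under (C.1), and then propose to salvage the constant through a Holley--Stroock perturbation combined with a Rothaus tightening; as you yourself suspect, this does not land on the clean constant $\max(\eta_0/4,\sigma^2/4\alpha)$. The paper bypasses the issue entirely by invoking \cite[Proposition~3.12]{Malrieu2001} and \cite[Theorem~4.1]{Collet2008}: these are LSI-propagation results tailored to the \emph{nonlinear} McKean--Vlasov semigroup, whose relevant curvature hypothesis is exactly the pointwise bound $\nabla^2(U+W)\succeq\alpha I$, not the Hessian of the frozen effective potential $U+W\ast\mu_t$. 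They yield directly the time-dependent LSI constant $\tfrac{\sigma^2}{4\alpha}(1-e^{-4\alpha t/\sigma^2})+\tfrac{\eta_0}{4}e^{-4\alpha t/\sigma^2}\le\max(\eta_0/4,\sigma^2/4\alpha)$, with no perturbation argument required. Replacing your linearized step by this black box closes the gap.
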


Note that $\LSI_0=0$ when $\mu_0$ is a Dirac.
In case (1), if $\LSI _0 \le \sigma^2/\alpha$ and $R=\infty$, then $r_c =(\alpha/L)^2-1$ and the two ranges of $r_c$ in Theorem \ref{th:main-theorem} correspond to $\alpha > \sqrt{2}L$ and $L < \alpha \le \sqrt{2} L$, respectively. We are unable to treat the case $\alpha \le L$, instead requiring the convexity to be sufficiently stronger than the interaction strength in the sense that $\alpha > L$. This kind of assumption is common in the literature, more when dealing with non-convex cases \cite{arnaudon2020second,Delarue2021,guillin2019uniform,LackerGibbs,salem2018gradient}. On the other hand, in the case where $\nabla W$ is bounded, part (2) has no smallness constraint anymore, except that the optimal exponent of $2$ is obtained only when $p_c \ge 2$, or $\sigma^2 \alpha \ge 4R^2$.

There are two main limitations in Corollary \ref{cor:convex-main}. The first is that part (1) requires $\alpha > L$, as discussed just above. The second is that it requires $\nabla W$ to be globally Lipschitz or bounded, which in particular rules out relevant cases like $W(x) = |x|^3$. Neither of these assumptions is needed in order to obtain \emph{qualitative} uniform-in-time propagation of chaos, which has been known since \cite{Malrieu2008,Malrieu2001}. 

\begin{remark}
Theorem \ref{th:main-theorem} and Corollary \ref{cor:convex-main} cover the Gaussian case, where $b_0(t,x) = -ax$ and $b(t, x, y) = b(y-x)$, for $x,y\in\R$, with $a,b>0$. Suppose that $\sigma = 1$ and that $\mu_0 = \delta_0$. 
It was shown in  \cite[Example 2.8 and Section 3]{LackerFoundation} that uniform-in-time propagation of chaos holds in this case at a rate of $O((k/n)^2)$, no matter the values of $a,b > 0$.
Corollary \ref{cor:convex-main} recovers this rate, but only when $a/b$ is large enough.
Indeed, this setting fits into Corollary \ref{cor:convex-main}(1), with $\eta_0=0$, $\alpha=a+b$, $L=b$ (and $R=\infty$), which yields the exponent $r_c=(1+(a/b))^2-1$.
We recover the optimal $O((k/n)^2)$ only when $r_c > 1$, or $a/b > \sqrt{2}-1$. Corollary \ref{cor:convex-main}(1) still applies but yields a suboptimal exponent when $a/b \le \sqrt{2}-1$.
\end{remark}

We note also that \cite{Durmus2020} recently used coupling methods to obtain quantitative propagation of chaos even when $U+W$ is non-convex, but with a suboptimal rate compared to Corollary \ref{cor:convex-main}. Their assumptions are somewhat similar otherwise, assuming a smallness condition on the Lipschitz constant of $\nabla W$, though they do not cover the case of non-Lipschitz $\nabla W$.

\subsection{Models on the torus}\label{se:torus-study}

In this section, we present a class of models in which the state space $\R^d$ is replaced by the torus $\T^d= \R^d/\Z^d$, and we take $b_0\equiv 0$ and $b(t,x,y)=K(x-y)$ for some vector field $K: \R^d \to \R^d$.
The proofs of Theorems \ref{th:main-theorem} and \ref{th:reverse} adapt without change to the case of the torus.
The McKean-Vlasov SDE and its corresponding PDE take the form
\begin{align}
dX_t &= K * \mu_t(X_t) dt + \sigma dB_t, \label{def:SDEtorus} \\
\partial_t\mu  &= -\div(\mu K * \mu  ) + \frac{\sigma^2}{2}\Delta \mu . \label{def:MVPDEtorus}
\end{align}
The $n$-particle Fokker-Planck equation becomes
\begin{align}
\partial_t P^n_t(x) = -\sum_{i=1}^n \div_{x_i}\bigg(\frac{1}{n-1}\sum_{j \neq i} K(x_i-x_j) P^n_t(x) \bigg) + \frac{\sigma^2}{2}\Delta P^n_t(x). \label{def:PDEtorus}
\end{align}
We make the following assumptions:

\begin{assumption}{\textbf{T}} \label{assumption:T}
Assume that $K$ is Lipschitz and also that the  initial law $\mu_0$ admits a smooth density satisfying the pointwise bound $\lambda^{-1} \le \mu_0 \le \lambda$, for some $\lambda \ge 1$. 
\end{assumption}

Note since that $\mu \equiv 1$ solves the PDE \eqref{def:MVPDEtorus}, because $\mu K * \mu \equiv \int_{\T^d} K$ is a constant; that is, the uniform measure on $\T^d$ is invariant for \eqref{def:MVPDEtorus}. There may be additional invariant measures, in general.
Our main result in this section gives a uniform-in-time propagation of chaos, with a sharp rate, for a sufficiently small mean field interaction $K$, which in particular rules out the existence of additional invariant measures:

\begin{corollary} \label{co:torus}
Suppose Assumption \ref{assumption:T} holds, and let $P^n_0 \in \P((\T^d)^n)$ be arbitrary. Then there exists a unique weak solution $P^n$ of \eqref{def:PDEtorus} starting from $P^n_0$, and there exists a unique classical solution $\mu$ of \eqref{def:MVPDEtorus} starting from $\mu_0$.
Assume that $\div \, K$ is small  in the sense that 
\begin{align}
\|\div \, K\|_\infty < \frac{\sigma^2\pi^2}{1+2\sqrt{2\log \lambda}} < \sigma^2\pi^2. \label{asmp:Ksmall}
\end{align}
Then Assumptions \ref{assumption:A} and \ref{assumption:E} hold, and Theorem \ref{th:main-theorem} applies with
\begin{align*}
r_c = \frac{\sigma^4(1-2r_0)}{2\lambda^2  \mathrm{diam}^2(K) } - 1,
\end{align*}
where we define
\begin{align*}
r_0 := \frac{\|\div\, K\|_\infty\sqrt{ 2 \log \lambda}}{\sigma^2\pi^2 - \|\div\, K\|_\infty}, \quad \mathrm{diam}(K) := \sup_{x,z \in \T^d}|K(x)-K(z)|.
\end{align*}
\end{corollary}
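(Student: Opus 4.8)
The plan is to verify that Assumptions \ref{assumption:E} and \ref{assumption:A} hold in the torus setting, with the stated constants, and then to invoke Theorem \ref{th:main-theorem} directly (the proofs of that theorem adapting verbatim to $\T^d$ as remarked). Existence and uniqueness of $P^n$ and $\mu$ is standard: since $K$ is Lipschitz, the drift $K*\mu_t$ is Lipschitz in space and the compactness of $\T^d$ removes all moment issues, so classical well-posedness of the nonlinear and linear Fokker-Planck equations applies; on the compact torus the linear equation \eqref{def:PDEtorus} is uniformly parabolic with smooth coefficients once $\mu$ is fixed, giving a unique smooth positive solution. Assumption \ref{assumption:E} is then essentially automatic: $b_0\equiv 0$ is locally bounded, $b(t,x,y)=K(x-y)$ is bounded (being continuous on the compact torus), $\langle\mu_t,b(t,x,\cdot)\rangle = K*\mu_t(x)$ is bounded uniformly, and the $p$- and $L^2$-integrability conditions in (E.3) hold trivially because all the relevant integrands are bounded on $(\T^d)^n$.

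The heart of the argument is the verification of Assumption \ref{assumption:A}. For (A.3), boundedness of $K$ gives $M \le (2\,\mathrm{diam}(K))^2$ or a similar explicit bound, finite uniformly in $t$. For the transport inequality (A.2): write $\langle \nu-\mu_t, K(x-\cdot)\rangle = \int K(x-y)(\nu-\mu_t)(dy)$; since $K(x-\cdot) - \langle\mu_t,K(x-\cdot)\rangle$ has oscillation at most $\mathrm{diam}(K)$, one bounds this by $\mathrm{diam}(K)$ times the total variation distance, then applies Pinsker's inequality $\|\nu-\mu_t\|_{\mathrm{TV}}^2 \le 2H(\nu\,|\,\mu_t)$, but this would only give $\gamma = 2\,\mathrm{diam}^2(K)$ without the $\lambda^2$ and the $(1-2r_0)$ factors. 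The stated $r_c$ with $\gamma \eta = \tfrac12\lambda^2\mathrm{diam}^2(K)/(1-2r_0)$ instead suggests a sharper route: one should establish a uniform-in-time two-sided bound $\tilde\lambda^{-1}\le \mu_t \le \tilde\lambda$ on the density (propagating the $t=0$ bound $\lambda^{-1}\le\mu_0\le\lambda$ forward), then feed this into the Holley-Stroock perturbation lemma \cite[Proposition 5.1.6]{Bakry} to get the LSI (A.1) for $\mu_t$ with a constant $\eta$ controlled by $\tilde\lambda$ and the LSI constant $\pi^{-2}$ (or $1/4\pi^2$, with the $1/4$ convention) of the uniform measure on $\T^d$; and one uses a weighted transport inequality that exploits the density lower bound to get $\gamma$ with the $\lambda^2$.

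The main obstacle, and the step requiring real work, is the uniform-in-time propagation of the density bounds for $\mu_t$. One cannot use a naive maximum principle because the drift $K*\mu_t$ is time-dependent and only bounded, not small, a priori. The natural strategy is an entropy/energy estimate: track $\|\mu_t - 1\|$ in $L^2(\T^d)$ or $L^\infty$, or track $\int \mu_t\log\mu_t$, and show it decays. Differentiating $\tfrac12\tfrac{d}{dt}\|\mu_t-1\|_{L^2}^2$ and integrating by parts produces a dissipation term $-\sigma^2\|\nabla\mu_t\|_{L^2}^2$, controlled below via the Poincaré inequality on $\T^d$ (constant $4\pi^2$, giving the $\pi^2$ appearing in \eqref{asmp:Ksmall}), against a production term involving $\int (K*\mu_t)\cdot\nabla\mu_t\,(\mu_t-1)$ that after integration by parts carries a factor $\|\div\,K\|_\infty$; the smallness condition \eqref{asmp:Ksmall} is exactly what makes the net coefficient negative. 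This gives exponential decay of $\mu_t$ to the uniform measure in $L^2$, hence (by parabolic smoothing / Sobolev embedding, or an $L^\infty$ iteration) a uniform-in-time bound $\|\mu_t-1\|_\infty \le C$, and tracking constants carefully yields precisely the bound encoded in $r_0$ and the modified LSI constant. Combining: Holley-Stroock then gives $\eta$, the density bounds give $\gamma$, and plugging $\tfrac{\sigma^4}{4\gamma\eta}-1$ yields the claimed $r_c$, after which Theorem \ref{th:main-theorem} closes the argument. The delicate accounting is in getting the $\sqrt{2\log\lambda}$ dependence sharp enough that the condition \eqref{asmp:Ksmall} in terms of $\lambda$ (rather than the propagated bound) suffices.
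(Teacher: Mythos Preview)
Your overall architecture is right---verify Assumption \ref{assumption:E} trivially from boundedness, then obtain uniform-in-time pointwise bounds on the density $\mu_t$, feed these into Holley--Stroock for the LSI, and use Pinsker for the transport inequality. But two points deserve correction.

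First, your allocation of the constants between $\gamma$ and $\eta$ is off. The $\lambda^2/(1-2r_0)$ factor lives entirely in the LSI constant $\eta$, not in $\gamma$. The transport inequality (A.2) comes from Pinsker alone, with no density information: for any constant vector $y$,
\[
|\langle \nu-\mu_t, K(x-\cdot)\rangle|^2 = |\langle \nu-\mu_t, K(x-\cdot)-y\rangle|^2 \le 2\||K-y|^2\|_\infty\, H(\nu\,|\,\mu_t),
\]
and optimizing over $y$ (take the midpoint of the range of $K$) gives $\gamma \le \tfrac12\mathrm{diam}^2(K)$, four times better than your $2\,\mathrm{diam}^2(K)$. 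There is no ``weighted transport inequality''; all of the $\lambda$-dependence enters through $\eta = \lambda^2/(1-2r_0)$ via Holley--Stroock once the pointwise density bounds are in hand.

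Second, your proposed route to those pointwise bounds---an $L^2$ energy estimate on $\|\mu_t-1\|_{L^2}^2$ followed by parabolic smoothing to $L^\infty$---is different from the paper's and would not naturally produce the stated constants. The paper instead uses relative entropy decay $H(\mu_t\,|\,\bm{1}) \le e^{-2ct}H(\mu_0\,|\,\bm{1})$ with $c=\sigma^2\pi^2-\|\div K\|_\infty$ (borrowed from \cite{carrillo2020long}), which is where the $\sqrt{2\log\lambda}$ arises, since $H(\mu_0\,|\,\bm{1})\le\log\lambda$ and Pinsker then controls $\|\div K*\mu_u\|_\infty$. The pointwise bounds on $\mu_T(x)$ are then obtained not by an $L^2\to L^\infty$ bootstrap but by a stochastic representation: run the backward SDE $dY_t=-K*\mu_{T-t}(Y_t)\,dt+\sigma dB_t$ from $Y_0=x$, apply It\^o's formula to $\mu_{T-t}(Y_t)$, and use Gronwall on the resulting identity $\E\mu_{T-t}(Y_t)=\mu_T(x)+\E\int_0^t\mu_{T-s}(Y_s)\,\div K*\mu_{T-s}(Y_s)\,ds$. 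This gives two-sided bounds on $\mu_T(x)$ directly, with exactly the $r_0$ appearing in the statement. Your $L^2$ approach could perhaps be pushed through, but the $L^2\to L^\infty$ step would bring in dimension-dependent constants and would not yield the clean $\sqrt{2\log\lambda}$.
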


Note that \eqref{asmp:Ksmall} ensures that $r_0 < 1/2$, so that $r_c > -1$. Theorem \ref{th:main-theorem} only yields anything useful, of course, if $r_c > 0$, which can occur when some combination of the following effects are present: $\sigma$ is large, or $K$ has small oscillations or divergence, or $\lambda$ is close to 1  which means that the initial law $\mu_0$ is $L^\infty$-close to the uniform distribution. Note when $K$ is divergence-free the simplification $r_0=0$ occurs.

\begin{example} \label{ex:Kuramoto}
Corollary \ref{co:torus} notably covers the example of the Kuramoto model  (without disorder) discussed in Remark \ref{re:kuramoto}, in sufficiently subcritical regimes. We keep this discussion brief, referring to \cite{acebron2005kuramoto,bertini2010dynamical} for background on this model. The Kuramoto model \eqref{def:Kuramoto} was written for $X^i_t$ taking values in $[0,2\pi]$, and to fit it into Corollary \ref{co:torus} (set in $\T^1=\R/\Z\cong [0,1]$) we simply rescale $X^i_t$ to $X^i_t/2\pi$.
This yields the parameters $\sigma=1/2\pi$ and $K(x)=K\sin(2\pi x)/2\pi$.
In Corollary \ref{co:torus}, the condition \ref{asmp:Ksmall} becomes
\begin{align*}
 K < \frac{1}{4+8\sqrt{2\log \lambda}} < \frac14,
\end{align*}
and the constant $r_c$ becomes
\begin{align*}
r_c=r_c(K,\lambda) := \frac{1-4K(1+2\sqrt{2\log\lambda})}{32\pi^2\lambda^2K^2(1-4K)} - 1.
\end{align*}
We obtain quantitative uniform-in-time propagation of chaos when $K<K_c^0(\lambda)$, where we define $K_c^a(\lambda)$ for $a \ge 0$   as the ($\lambda$-dependent) value of $K$ for which $r_c(K,\lambda) = a$. For $K<K_c^1(\lambda)$ we obtain the optimal exponent $O((k/n)^2)$ from Theorem \ref{th:main-theorem}.
For example, when $\lambda=1$ (meaning the initialization $\mu_0$ is uniform), we have $K^0_c(1)=\sqrt{2}/8\pi$ and $K^1_c(1)=1/8\pi$. We are not able to determine the sharp rate quantitative uniform-in-time propagation of chaos all the way to criticality, i.e., for all $K < 1$.
\end{example}

\section{Proof of the main theorem}\label{se:sec-proof-main-theorem}

This section gives the proof of Theorem \ref{th:main-theorem}. We suppose that Assumptions \ref{assumption:E} and \ref{assumption:A} hold throughout this section. The central quantity of study will be the relative entropy
\begin{equation*}
H^k_t := H(P^k_t\,|\,\mu^{\otimes k}_t), \quad t \ge 0, \ 1 \le k \le n.
\end{equation*}

\subsection{The entropy estimate} \label{se:entropy_estimate}
The first step is to estimate the time derivative of $H^k_t$. We apply the following more general estimate between solutions of Fokker-Planck equations.

\begin{lemma} \label{le:entropy-estimate}
Let $d \in \N$ and $\sigma > 0$. For each $i=1,2$, let $b^i : [0,\infty) \times \R^d \to \R^d$ be measurable, and assume we are given a continuous flow of probability measures $(\mu^i_t)_{t \ge 0}$ on $\R^d$ satisfying the weak Fokker-Planck equation
\begin{align*}
\begin{split}
\langle \mu^i_t ,\varphi\rangle &= \langle  \mu^i_0,\varphi\rangle + \int_0^t \int_{\R^d} \Big( b^i(s,x)\cdot \nabla \varphi(x) + \frac{\sigma^2}{2}\Delta\varphi(x)\Big)\mu^i_s(dx)ds, \quad \forall \varphi \in C^\infty_c(\R^d), \ t \ge 0,
\end{split}
\end{align*}
as well as the following conditions:
\begin{enumerate}
\item[(H.1)] The function $b^2$ is locally bounded. 
\item[(H.2)] The function $b^1$ belongs to $L^p_{\mathrm{loc}}( \mu^1 )$ for some $p > d+2$. That is, for each $T > 0$ and each bounded Borel set $S \subset \R^d$ we have 
\begin{align*}
\int_0^T  \int_S & |b^1(t,x)|^p \mu_t^1(dx) dt < \infty.
\end{align*}
\item[(H.3)] The following hold for each $T > 0$:
\begin{align*}
\int_0^T  \int_{\R^d} & |b^2(t,x)|^2 \mu_t^1(dx) dt < \infty,  \qquad \sup_{t \in [0,T]}  \int_{\R^d} |b^1(t,x)|^2 \mu_t^1(dx)  < \infty. 
\end{align*}
\end{enumerate}
Then, it holds for each $t > s \ge 0$ that
\begin{align*}
H(\mu_t^1 \,|\, \mu_t^2) +  \frac{\sigma^2}{4} \int_s^t I(\mu_u^1 \,|\, \mu_u^2) \, du \leq H(\mu_s^1 \,|\, \mu_s^2) + \frac{1}{\sigma^2}\int_s^t \int_{\R^d} \left| b^1(u, x) - b^2(u, x)\right|^2 \mu^1_u(dx) du .
\end{align*}
In particular, if $H(\mu^1_0 \,|\, \mu^2_0) < \infty$, then $H(\mu_t^1 \,|\, \mu_t^2)<\infty$ for all $t > 0$.
\end{lemma}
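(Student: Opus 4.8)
The statement is the standard "relative entropy dissipation" inequality, so the strategy is to compute $\frac{d}{dt}H(\mu^1_t\,|\,\mu^2_t)$ and integrate. The formal computation is exactly \eqref{intro:entropy}: writing $h_t := d\mu^1_t/d\mu^2_t$, one has $\frac{d}{dt}\langle\mu^1_t,\log h_t\rangle = \int\big((b^1_t-b^2_t)\cdot\nabla\log h_t - \tfrac{\sigma^2}{2}|\nabla\log h_t|^2\big)\,d\mu^1_t$, and then one applies Young's inequality in the form $(b^1-b^2)\cdot\nabla\log h \le \tfrac{\sigma^2}{4}|\nabla\log h|^2 + \tfrac{1}{\sigma^2}|b^1-b^2|^2$ to absorb half of the Fisher-information term, leaving the stated inequality with the factor $\sigma^2/4$ on $\int_s^t I(\mu^1_u\,|\,\mu^2_u)\,du$. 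So the only real content is making this rigorous under the weak regularity in (H.1)--(H.3).

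**Making it rigorous.** First I would use the non-degeneracy $\sigma>0$ together with the integrability hypotheses to invoke standard parabolic regularity theory (e.g.\ the results on Fokker--Planck equations in Bogachev--Krylov--Röckner--Shaposhnikov, or Aronson-type bounds): for each $t>0$, $\mu^1_t$ and $\mu^2_t$ have densities that are locally Hölder, strictly positive, and in suitable Sobolev spaces, with $\mu^2_t$ in particular admitting a positive locally-Lipschitz-in-$x$ density so that $\nabla\log\mu^2_t$ is controlled. This is where the hypothesis $p>d+2$ in (H.2) is used — it is the threshold for the drift to lie in the De Giorgi--Nash--Moser / Krylov class guaranteeing continuity and Harnack estimates for $\mu^1$. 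One should also record the entropy-production identity: the a priori bound comes precisely from these regularity statements, so if $H(\mu^1_0\,|\,\mu^2_0)=\infty$ the claim is vacuous, and otherwise the finiteness propagates. Then I would justify the chain rule: approximate $\log h$ by a smooth bounded truncation, test the two weak Fokker--Planck equations against it (legitimate by (H.1) and the $L^2(\mu^1)$ bounds in (H.3)), differentiate in time, and pass to the limit using the $L^p_{loc}$ control on $b^1$ and the $L^2$ controls to handle the cross term and the Fisher information term. A convexity/lower-semicontinuity argument (Fatou) handles the limit in the $I$-integral so that one gets an inequality $\le$, not an identity, which is all that is claimed.

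**The main obstacle.** The genuinely delicate step is the time-differentiation of $t\mapsto H(\mu^1_t\,|\,\mu^2_t)$ and the justification of plugging $\varphi = \log h_t$ (which is unbounded, time-dependent, and only as regular as the solutions allow) into both weak formulations simultaneously. One must control the time-derivative of the test function itself — i.e.\ terms involving $\partial_t\log\mu^2_t$ — which requires knowing the Fokker--Planck equation for $\mu^2$ holds in a strong enough sense to substitute; the local boundedness of $b^2$ in (H.1) plus interior regularity give this on compact sets, and then one exhausts $\R^d$ by a cutoff-in-space argument, showing the boundary/cutoff contributions vanish using the $L^2(\mu^1)$ integrability in (H.3) and dominated convergence. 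Handling the unboundedness of $\log h$ near $\{h=0\}$ and $\{h=\infty\}$ is done by truncating $h$ to $[\epsilon, 1/\epsilon]$, running the argument, and letting $\epsilon\to0$ with monotone/dominated convergence — entropy and Fisher information behave well under this truncation because $x\log x$ and $|\nabla\log|^2$-type integrands are controlled. I expect the bulk of the work, and the reason the lemma is stated and proved separately rather than just cited, to be precisely this approximation bookkeeping; the algebra of completing the square is trivial by comparison.
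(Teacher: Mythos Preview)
Your proposal is correct and follows essentially the same strategy as the paper: establish density regularity via parabolic theory (this is where $p>d+2$ enters), run the formal entropy-dissipation computation against a truncated convex function $f_{k,m}\approx t\log t - t$ and a spatial cutoff $\psi_n$, then pass to the limit in the truncations using monotone/dominated convergence and Fatou for the Fisher term.

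The one tactical difference worth flagging: rather than directly testing the weak equations against truncations of $\log h_t$ as you describe, the paper first \emph{mollifies} $\mu^1$ by spatial convolution with a Gaussian $\rho_\epsilon$, obtaining a new flow $\mu^{1,\epsilon}$ that solves a Fokker--Planck equation with the effective drift $b^{1,\epsilon}=\rho_\epsilon*(b^1\mu^1)/\mu^{1,\epsilon}$. The point is that $b^{1,\epsilon}$ is \emph{locally bounded} (this uses the second half of (H.3)), so one lands exactly in the setting of Bogachev--Krylov--R\"ockner--Shaposhnikov's Lemma~2.4(ii), which delivers the inequality with $f_{k,m}$ and $\psi_n$ as a black box. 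One then sends $\epsilon\to 0$, $n\to\infty$, $k,m\to\infty$ in that order. This buys modularity---the delicate chain-rule justification you identify as the main obstacle is outsourced to the cited lemma once both drifts are locally bounded---at the cost of an extra limit to manage. Your more direct route (truncate $\log h$, test, pass to the limit) should also work but requires redoing that chain-rule argument by hand under the weaker $L^p_{\mathrm{loc}}$ hypothesis on $b^1$; the mollification trick is what lets the paper avoid that.
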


Estimates of this form might be considered folklore, with similar forms appearing in \cite{bresch2020mean,jabin-wang-bounded,jabin-wang-W1inf} under different assumptions.
If one ignores questions of smoothness and integrability, the proof follows by applying the Fokker-Planck equation for $\mu^2$ with the test function $\varphi=(\mu^1/\mu^2)\log(\mu^1/\mu^2)$.
It takes some care, though, to make this rigorous.
Lemma \ref{le:entropy-estimate} follows almost immediately from the arguments in \cite[Lemma 2.4]{Bogachev2016}, with their main results holding under the sole assumptions that $b^1$ and $b^2$ are Borel measurable and locally bounded. Local boundedness of $b^1$ turns out to be too restrictive for our application, where we wish to use $b^1=\widehat{b}^k$ defined in \eqref{def:bhat^k_i} below; the issue is that local boundedness is not preserved under conditioning. Nonetheless, the assumptions of Lemma \ref{le:entropy-estimate} are enough to ensure sufficient smoothness of $\mu^1$, summarized in Remark \ref{re:mu2-positive} below, which enables the same arguments as in \cite{Bogachev2016} to be carried out, after an additional mollification step in which $b^1$ is approximated by a locally bounded function.
See Appendix A of the first arXiv version of the present paper for full details.


\begin{remark} \label{re:mu2-positive}
The assumptions (H.1,2) ensure that the measure $dt\mu^i_t(dx)$ admits a (H\"older) continuous density on $(0,\infty) \times \R^d$, which we denote by $\mu^i(t,x)$, for each $i=1,2$. In addition, $\mu^i(t,\cdot) \in W^{p,1}(U)$ for every $p \ge 1$, every $t > 0$, and every bounded open set $U \subset \R^d$. 
See \cite[Proposition 6.5.1]{bogachev-book}.
Lastly, since $b^2$ is locally bounded, the continuous version of the density $\mu^2$ is strictly positive on $(0,\infty) \times \R^d$; see \cite[Example 8.3.8]{bogachev-book}.
\end{remark}

\begin{remark}\label{re:enhanced-entropy-estimate}
An easy adaptation of the proof of Lemma \ref{le:entropy-estimate} gives the following more general estimate, for $0 < c < 1$:
\begin{align*}
H(\mu_t^1 | \mu_t^2) +  \frac{\sigma^2}{2}(1-c) \!\int_s^t I(\mu_u^1 | \mu_u^2) du \leq H(\mu_s^1 | \mu_s^2) + \frac{1}{2\sigma^2 c}\int_s^t \int_{\R^d} \left| b^1(u, x) - b^2(u, x)\right|^2 \mu^1_u(dx) du .
\end{align*}
This allows one to trade off between the Fisher information and drift terms. Remarkably, it turns out in our context that the optimal choice is always $c=1/2$.
\end{remark}

\subsection{The BBGKY hierarchy} \label{se:hierarchy}

We will apply Lemma \ref{le:entropy-estimate} with $P^k_t$ and $\mu^{\otimes k}_t$ in place of $\mu^1_t$ and $\mu^2_t$, respectively, but first we need to represent $P^k$ and $\mu^{\otimes k}$ for each $k$ as solutions of Fokker-Planck equations. For $\mu^{\otimes k}$ this is straightforward. For $P^k$  this is accomplished using the so-called \emph{BBGKY hierarchy}, which is well known, but we derive it in the lemma below for completeness; see \cite[Section 1.5]{golse2016dynamics} for additional references and a derivation in the zero-noise case. First, note that the measure $dtP^n_t(dx)$ has a positive density on $(0, \infty) \times (\R^d)^n$; see Remark \ref{re:mu2-positive} and note that (H.2) holds because of the local boundedness of $b_0$ and because of Assumption (\ref{assumption:E}.3). By marginalization, $dtP^k_t(dx)$ has a positive density on $(0, \infty) \times (\R^d)^k$. Recall in the following that we use the same letter to denote a measure and its density when it exists, e.g., $P^k_t(dx)=P^k_t(x)dx$.

\begin{lemma} \label{le:BBGKY}
Let $1 \le k < n$.
Define the conditional density
\begin{align*}
P^{k+1|k}_{t,x_1,\ldots,x_k}(x_{k+1}) = \frac{P^{k+1}_t(x_1,\ldots,x_{k+1})}{P^{k}_t(x_1,\ldots,x_{k})}, \quad t > 0, \ x_1,\ldots,x_{k+1} \in \R^d.
\end{align*}
For $i=1,\ldots,k$, define the functions $\widehat{b}^k_i : [0,\infty) \times (\R^d)^k \to \R^d$ by
\begin{align}
\widehat{b}^k_i(t,x_1,\ldots,x_k) := \frac{1}{n-1}\sum_{j \neq i,\,j=1}^k b(t,x_i,x_j) + \frac{n-k}{n-1}\langle P^{k+1|k}_{t,x_1,\ldots,x_k},b(t,x_i,\cdot)\rangle. \label{def:bhat^k_i}
\end{align}
Then, for each $1 \le k < n$, $\varphi \in C^\infty_c((\R^d)^k)$, and $t \ge 0$,
\begin{align*}
\begin{split}
\langle P^k_t - P^k_0,\varphi\rangle = \int_0^t \int_{(\R^d)^k}  \Big( \sum_{i=1}^k(b_0(s,x_i) + \widehat{b}^k_i(s,x))\cdot \nabla_{x_i} \varphi(x) + \frac{\sigma^2}{2}\Delta\varphi(x)\Big)P^k_s(dx)ds.
\end{split}
\end{align*}
\end{lemma}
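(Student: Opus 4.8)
The plan is to start from the $n$-particle Fokker–Planck equation \eqref{eq:FokkerPlanck} and project it onto the first $k$ coordinates by choosing test functions that depend only on $x_1,\dots,x_k$. Concretely, given $\varphi \in C^\infty_c((\R^d)^k)$, I would extend it to a function on $(\R^d)^n$ depending only on the first $k$ variables — but since $\varphi$ then has non-compact support on $(\R^d)^n$, I would first need to justify using such test functions in \eqref{eq:FokkerPlanck}. This is a routine approximation: multiply by a smooth cutoff $\chi_m(x_{k+1},\dots,x_n)$ equal to $1$ on a large ball, and let $m\to\infty$, using the integrability hypotheses in (E.3) (the $L^2$ bounds on $b_0(t,x_1)$ and $b(t,x_1,x_2)$, together with exchangeability, control the error terms $\nabla\chi_m$ and $\Delta\chi_m$ produce) and the fact that $\varphi$ itself is bounded with bounded derivatives. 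Exchangeability of $P^n_t$ is what lets me reduce all the symmetric sums to representative terms.

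Once test functions of the form $\varphi(x_1,\dots,x_k)$ are admissible, plugging into \eqref{eq:FokkerPlanck} gives
\[
\langle P^k_t - P^k_0,\varphi\rangle = \int_0^t \int_{(\R^d)^n} \Big[ \sum_{i=1}^k \Big(b_0(s,x_i) + \frac{1}{n-1}\sum_{j\neq i} b(s,x_i,x_j)\Big)\cdot\nabla_{x_i}\varphi + \frac{\sigma^2}{2}\Delta\varphi\Big] P^n_s(dx)\,ds,
\]
where the $i$-sum truncates at $k$ because $\nabla_{x_i}\varphi = 0$ for $i>k$, and the Laplacian only hits $x_1,\dots,x_k$. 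Now I split the inner sum $\sum_{j\neq i,\,j=1}^n$ into $j \le k$ and $j > k$. The terms with $j\le k$ depend only on $(x_1,\dots,x_k)$, so integrating $P^n_s$ down to $P^k_s$ is immediate and produces the first term of $\widehat b^k_i$ in \eqref{def:bhat^k_i}. For each of the $n-k$ terms with $j>k$, by exchangeability I may replace $x_j$ by $x_{k+1}$, so the contribution is $\frac{n-k}{n-1}\int b(s,x_i,x_{k+1})\,P^{k+1}_s(dx_1,\dots,dx_{k+1})\cdot\nabla_{x_i}\varphi$; writing $P^{k+1}_s = P^{k+1|k}_{s,x_1,\dots,x_k}\otimes P^k_s$ (legitimate since $dt\,P^k_t(dx)$ has a positive density, as noted before the lemma) turns the integral against $x_{k+1}$ into $\langle P^{k+1|k}_{s,x},\, b(s,x_i,\cdot)\rangle$, which is exactly the second term of $\widehat b^k_i$. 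Collecting everything yields the claimed weak equation.

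The main obstacle is purely technical: justifying the cutoff/approximation step so that the non-compactly-supported test function $\varphi(x_1,\dots,x_k)$ on $(\R^d)^n$ can be used in \eqref{eq:FokkerPlanck}, and checking that the resulting $\widehat b^k_i$ are well enough behaved (locally bounded, with the right integrability) for the equation to make sense — but the integrability needed is supplied directly by Assumption (E.3) and the disintegration argument. Everything else is bookkeeping: tracking which coordinates each term depends on and invoking exchangeability to relabel indices. I should also note in passing that the $k<n$ hypothesis is what makes the $j>k$ sum nonempty; the case $k=n$ is just \eqref{eq:FokkerPlanck} itself.
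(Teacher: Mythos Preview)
Your proposal is correct and follows essentially the same route as the paper: apply \eqref{eq:FokkerPlanck} to a test function depending only on $(x_1,\dots,x_k)$, marginalize the $b_0$ and Laplacian terms, split the interaction sum into $j\le k$ and $j>k$, and use exchangeability plus disintegration $P^{k+1}_s=P^k_s\otimes P^{k+1|k}_{s,x}$ to identify $\widehat b^k_i$. You are in fact slightly more careful than the paper, which simply applies \eqref{eq:FokkerPlanck} to $\varphi\in C^\infty_c((\R^d)^k)$ without commenting on the non-compact support in the remaining $n-k$ variables; your cutoff argument fills that (routine) gap.
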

\begin{proof}
Apply the weak Fokker-Planck equation \eqref{eq:FokkerPlanck} to a test function $\varphi \in C^\infty_c((\R^d)^k)$ depending on only the first $k$ variables to find
\begin{align*}
\langle P^k_t-P^k_0,\varphi\rangle  
	&= \sum_{i=1}^k\int_0^t \int_{(\R^d)^n} \! \bigg(b_0(s, x_i) + \frac{1}{n-1} \sum_{j = 1, j\neq i}^n b(s, x_i, x_j)\bigg) \!\cdot \!\nabla_{x_i}\varphi(x_1,\ldots,x_k) \,P^n_s(dx)ds \\
	&\quad + \frac{\sigma^2}{2}\int_0^t \int_{(\R^d)^n}  \Delta \varphi(x_1,\ldots,x_k) P^n_s(dx)ds.
\end{align*}
Marginalizing yields
\begin{align*}
\int_{(\R^d)^n} b_0(s, x_i)\cdot \nabla_{x_i}\varphi(x_1,\ldots,x_k)\,P^n_s(dx) &= \int_{(\R^d)^k} b_0(s, x_i)\cdot \nabla_{x_i}\varphi(x)\,P^k_s(dx), \\
\int_{(\R^d)^n} \Delta \varphi(x_1,\ldots,x_k)\,P^n_s(dx) &= \int_{(\R^d)^k} \Delta \varphi(x)\,P^k_s(dx).
\end{align*}
For the interaction term, we compute
\begin{align}
\int_{(\R^d)^n} & \frac{1}{n-1} \sum_{j = 1, j\neq i}^n b(s, x_i, x_j) \cdot \nabla_{x_i}\varphi(x_1,\ldots,x_k) \, P^n_s(dx) \nonumber \\
	&= \frac{1}{n-1} \sum_{j = 1, j\neq i}^k\int_{(\R^d)^n}  b(s, x_i, x_j) \cdot \nabla_{x_i}\varphi(x_1,\ldots,x_k) \, P^n_s(dx) \label{pf:BBGKY1} \\
	&\quad + \frac{1}{n-1} \sum_{j = k+1}^n\int_{(\R^d)^n}  b(s, x_i, x_j) \cdot \nabla_{x_i}\varphi(x_1,\ldots,x_k) \, P^n_s(dx) \nonumber 
\end{align}
We claim this is equal to $\int_{(\R^d)^k} \widehat{b}^k_i(t,x) \cdot \nabla_{x_i}\varphi(x) \,P^k_s(dx)$.
Again by marginalizing, the first term on the right-hand of \eqref{pf:BBGKY1} can be simplified to
\begin{align*}
\frac{1}{n-1} \sum_{j = 1, j\neq i}^k\int_{(\R^d)^k}  b(s, x_i, x_j) \cdot \nabla_{x_i}\varphi(x) \, P^k_s(dx),
\end{align*}
Using symmetry, the second term on the right-hand of \eqref{pf:BBGKY1} can be simplified to
\begin{align*}
&\frac{n-k}{n-1} \int_{(\R^d)^{k+1}}  b(s, x_i, x_{k+1}) \cdot \nabla_{x_i}\varphi(x_1,\ldots,x_k) \, P^{k+1}_s(dx) \\
	& \ \ \ = \frac{n-k}{n-1} \int_{(\R^d)^k} \bigg[ \int_{\R^d}  b(s, x_i, x_{k+1}) \,P^{k+1|k}_{s,x_1,\ldots,x_k}(dx_{k+1}) \bigg] \cdot \nabla_{x_i}\varphi(x_1,\ldots,x_k) \, P^k_s(dx_1,\ldots,dx_k).
\end{align*}
The expression in brackets is exactly $\langle P^{k+1|k}_{t,x_1,\ldots,x_k},b(t,x_i,\cdot)\rangle$, and the proof is complete.
\end{proof}

\begin{remark}
Alternatively, working at the level of the stochastic processes \eqref{eq:intro-interacting-particle-system}, the BBGKY hierarchy can be derived using the so-called \emph{mimicking theorem} \cite[Corollary 3.7]{brunick2013mimicking}.
\end{remark}

\subsection{Estimating $H^k_t$}

We now apply Lemma \ref{le:entropy-estimate} to estimate $H^k_t=H(P^k_t\,|\,\mu^{\otimes k}_t)$. We know from Lemma \ref{le:BBGKY} that $P^k$ solves a Fokker-Planck equation with drift $(\widehat{b}^k_1,\ldots,\widehat{b}^k_k)$. On the other hand, the nonlinear-Fokker-Planck equation \eqref{eq:FokkerPlanck-nonlinear} easily tensorizes to show that $\mu^{\otimes k}$ satisfies the following Fokker-Planck equation: for all $\varphi \in C^\infty_c((\R^d)^k)$ and $t \ge 0$,
\begin{align*}
\begin{split}
\langle \mu^{\otimes k}_t-\mu^{\otimes k}_0,\varphi\rangle  
	&= \int_0^t \int_{(\R^d)^k} \Big( \sum_{i=1}^k \big(b_0(s,x_i) + \langle \mu_s,b(s,x_i,\cdot)\rangle \big)\cdot \nabla_{x_i} \varphi(x) + \frac{\sigma^2}{2}\Delta\varphi(x)\Big)\mu^{\otimes k}_s(dx)ds.
\end{split}
\end{align*}
We now apply  Lemma \ref{le:entropy-estimate} to get, for all $t > s \ge 0$,
\begin{align}
H^k_t &- H^k_s + \!\frac{\sigma^2}{4}\int_s^tI(P^k_u\,|\,\mu^{\otimes k}_u)\,du  \le \!\frac{1}{\sigma^2}\!\int_s^t \!\int_{(\R^d)^k} \sum_{i=1}^k \big|\widehat{b}^k_i(u,x) - \langle \mu_u,b(u,x_i,\cdot)\rangle\big|^2 P^k_u(dx)du. \label{pf:main-ent-est1}
\end{align}
Indeed, the assumptions of Lemma \ref{le:entropy-estimate} are easily checked using Assumption \ref{assumption:E}: We know by (\ref{assumption:E}.1,2) that $b_0(t,x) + \langle \mu_t,b(t,x,\cdot)\rangle$ is locally bounded, so that (H.1) holds. The assumption (H.2) holds because $b_0$ is locally bounded, and because (\ref{assumption:E}.3) gives
\begin{align*}
\int_0^T \int_{(\R^d)^k} |\widehat{b}^k_i(t,x)|^p P^k_t(dx)dt \le \int_0^T \int_{(\R^d)^n}|b(t,x_1,x_2)|^p\,P^n_t(dx)dt < \infty,
\end{align*}
for all $p,T > 0$ and $i=1,\ldots,k$ by Jensen's inequality and by exchangeability. The first part of (H.3) comes from the local boundedness of $b_0$ and the first part of (\ref{assumption:E}.3). Similarly, the second part of (H.3) follows from the second part of (\ref{assumption:E}.3). 
With \eqref{pf:main-ent-est1} now justified, the integrability assumptions in \ref{assumption:E} will henceforth play no role.

The following lemma summarizes the implications of \eqref{pf:main-ent-est1}, which will be used in multiple ways. Recall the constants $(\LSI,\gamma,M)$ defined in Assumption \ref{assumption:A}.

\begin{lemma}\label{le:first_entropy_estimate}
Suppose that there exist constants $C_0 >0$ and $p_1,p_2\in(0, 2]$ such that
\begin{align*}
H^k_0 &\leq C_0 k^{p_1} / n^{p_2}, \quad 1 \le k \le n.
\end{align*}
Let $\delta > 0$, and define  $Z := \sigma^2/4\LSI $, $\tilde{\gamma} := \gamma (1+ \delta)/\sigma^2$, and  the quantities
\begin{align}
\tilde{A}_k^\ell(t_k) & := \bigg(  \prod_{j=k}^\ell \tilde{\gamma} j \bigg) \int_0^{t_k} \int_0^{t_{k+1}} \cdots \int_0^{t_\ell} e^{-\sum_{j=k}^\ell (Z + \tilde{\gamma} j)(t_{j} - t_{j+1})} dt_{\ell+1} \ldots dt_{k+1}\label{eq:def_overline_A_k_ell},\\
\tilde{B}_k^\ell(t_k) & := \bigg( \prod_{j=k}^{\ell-1} \tilde{\gamma} j \bigg) \int_0^{t_k} \int_0^{t_{k+1}} \cdots \int_0^{t_{\ell-1}} e^{- (Z + \tilde{\gamma} \ell)t_\ell -  \sum_{j=k}^{\ell-1} (Z + \tilde{\gamma} j)(t_{j} - t_{j+1})} dt_{\ell} \ldots dt_{k+1},\label{eq:def_tilde_B_k_ell}
\end{align}
for $t_k \ge 0$  and integers $\ell \ge k$, with the convention $\tilde{B}_k^k(t) := e^{-(Z + \tilde{\gamma}k)t}$.
Then the following hold:
\begin{enumerate}
\item For each $T \geq 0$ and $1 \le k < n$, 
\begin{align}
H_T^n &\leq C_0 n^{p_1 - p_2} e^{-\sigma^2T/4\eta} + 4 n M \LSI  \sigma^{-4}.
\label{eq:first_estimate_on_H_n} \\
H_T^k &\leq  \frac{C_0}{n^{p_2}}\sum_{\ell=k}^{n-1} \ell^{p_1} \tilde{B}_k^\ell(T) +\frac{ M}{\delta \gamma  n^2} \sum_{\ell=k}^{n-1} \ell^{2 } \tilde{A}_k^\ell(T)+ \tilde{A}_{k}^{n-1}(T) H_T^n.
\label{eq:first_estimate_on_H_k}
\end{align}
\item If we assume additionally that there exist $C_1 > 0$ and $p\in [0, 2]$ such that
\begin{align}
\sup_{T \ge 0} H_T^3 & \leq C_1 / n^p, \label{assmp:H3}
\end{align}
then, by setting $C_2 = 2 M + \sqrt{\gamma M C_1}$, it holds for each $T \ge 0$ and $1 \le k < n$  that
\begin{align}
H_T^n &\leq C_0 n^{p_1 - p_2} e^{-\sigma^2T/4\eta} + 4 n^{1-\frac{p}{2}} C_2 \LSI  \sigma^{-4}, \label{eq:sharpest_estimate_on_H_n} \\
H_T^k &\leq  \frac{C_0}{n^{p_2}}\sum_{\ell=k}^{n-1} \ell^{p_1} \tilde{B}_k^\ell(T) +\frac{ C_2}{\delta \gamma  n^2} \sum_{\ell=k}^{n-1} \ell^{2-\frac{p}{2}} \tilde{A}_k^\ell(T)+ \tilde{A}_{k}^{n-1}(T) H_T^n.
\label{eq:sharpest_estimate_on_H_k}
\end{align}
\end{enumerate}
\end{lemma}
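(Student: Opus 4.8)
The plan is to establish the differential inequality foreshadowed in the introduction and then unroll it along the hierarchy.

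\emph{From \eqref{pf:main-ent-est1} to a one-step recursion.} First I would bound the right-hand side of \eqref{pf:main-ent-est1}. Using \eqref{def:bhat^k_i} and $1=\tfrac{k-1}{n-1}+\tfrac{n-k}{n-1}$, write
\[
\widehat{b}^k_i(u,x)-\langle\mu_u,b(u,x_i,\cdot)\rangle=\frac{1}{n-1}\sum_{\substack{1\le j\le k\\ j\neq i}}\big(b(u,x_i,x_j)-\langle\mu_u,b(u,x_i,\cdot)\rangle\big)+\frac{n-k}{n-1}\big\langle P^{k+1|k}_{u,x}-\mu_u,b(u,x_i,\cdot)\big\rangle ,
\]
and apply $|a+b|^2\le(1+\tfrac1\delta)|a|^2+(1+\delta)|b|^2$ with weight $1+\delta$ on the conditional term. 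For the conditional term I apply Assumption (\ref{assumption:A}.2) with $\nu=P^{k+1|k}_{u,x}$, sum over $i$, integrate against $P^k_u$, and use the disintegration identity $\int H(P^{k+1|k}_{u,x}\,|\,\mu_u)\,P^k_u(dx)=H^{k+1}_u-H^k_u$; together with $(1+\delta)\big(\tfrac{n-k}{n-1}\big)^2\le 1+\delta$ this gives a contribution $\tilde{\gamma}k(H^{k+1}_u-H^k_u)$. For the empirical term I use Cauchy--Schwarz in the inner sum, Assumption (\ref{assumption:A}.3), exchangeability, and $\big(\tfrac{k-1}{n-1}\big)^2\le\big(\tfrac{k}{n}\big)^2$, obtaining after the sum over $i$ a contribution $\le\tfrac{\tilde{\gamma} M}{\delta\gamma}\tfrac{k^3}{n^2}$. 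Finally Assumption (\ref{assumption:A}.1), tensorized to $\mu_u^{\otimes k}$, gives $\tfrac{\sigma^2}{4}I(P^k_u\,|\,\mu_u^{\otimes k})\ge Z\,H^k_u$. Plugging these into \eqref{pf:main-ent-est1} yields, for $1\le k<n$,
\[
H^k_t-H^k_s+(Z+\tilde{\gamma} k)\int_s^t H^k_u\,du\le\tilde{\gamma} k\int_s^t H^{k+1}_u\,du+\tfrac{\tilde{\gamma} M}{\delta\gamma}\tfrac{k^3}{n^2}(t-s),
\]
and, with no conditional term at $k=n$, $H^n_t-H^n_s+Z\int_s^t H^n_u\,du\le\tfrac{nM}{\sigma^2}(t-s)$.

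\emph{Grönwall and iteration.} Each of these is a comparison inequality, so $H^k_T\le H^k_0e^{-(Z+\tilde{\gamma} k)T}+\int_0^T e^{-(Z+\tilde{\gamma} k)(T-t)}\big(\tilde{\gamma} k\,H^{k+1}_t+\tfrac{\tilde{\gamma} M}{\delta\gamma}\tfrac{k^3}{n^2}\big)dt$ for $k<n$, and the $k=n$ inequality gives \eqref{eq:first_estimate_on_H_n} directly (using $1/Z=4\LSI/\sigma^2$ and $H^n_0\le C_0n^{p_1-p_2}$). I then substitute the bound for $H^{k+1}$ into that for $H^k$, then for $H^{k+2}$, and so on down to level $n-1$; each substitution adds a factor $\tilde{\gamma} j$ and one more layer of time integration with kernel $e^{-(Z+\tilde{\gamma} j)(t_j-t_{j+1})}$. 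Collecting the resulting terms by origin, the pieces carrying $H^\ell_0$ ($k\le\ell\le n-1$) assemble into $\sum_{\ell=k}^{n-1}H^\ell_0\,\tilde{B}_k^\ell(T)$, the pieces carrying the forcing $\tfrac{\tilde{\gamma} M}{\delta\gamma}\tfrac{\ell^3}{n^2}$ assemble (after extracting the leading factor $\tilde{\gamma}\ell$ of $\tilde{A}_k^\ell$) into $\tfrac{M}{\delta\gamma n^2}\sum_{\ell=k}^{n-1}\ell^2\,\tilde{A}_k^\ell(T)$, and the one chain term that has descended all the way to level $n$ is a time-integral of $H^n$ of total kernel mass $\tilde{A}_k^{n-1}(T)$, hence bounded by $\tilde{A}_k^{n-1}(T)\,H^n_T$. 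Inserting $H^\ell_0\le C_0\ell^{p_1}/n^{p_2}$ gives \eqref{eq:first_estimate_on_H_k}, proving (1).

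\emph{The refined forcing, part (2).} Here only the empirical term changes. Instead of Cauchy--Schwarz I expand $\big|\tfrac1{n-1}\sum_{j\neq i,\,j\le k}(b-\langle\mu_u,b\rangle)\big|^2$ and integrate against $P^k_u$: the $k-1$ diagonal terms are each $\le M$ by (\ref{assumption:A}.3). For an off-diagonal pair $j\neq j'$ (both $\neq i$) the cross term vanishes when $P^k_u$ is replaced by $\mu_u^{\otimes k}$, since $\int b(u,x_i,\cdot)\,d\mu_u=\langle\mu_u,b(u,x_i,\cdot)\rangle$; so it equals an integral against $P^3_u-\mu_u^{\otimes3}$, and conditioning on the two ``outer'' coordinates, applying (\ref{assumption:A}.2) to the conditional law of the remaining coordinate, then Cauchy--Schwarz together with (\ref{assumption:A}.3), the disintegration identity, and the hypothesis $\sup_u H^3_u\le C_1/n^p$, bounds it by $\sqrt{\gamma M C_1}\,n^{-p/2}$. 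With fewer than $k^2$ off-diagonal pairs the empirical term is $\le\tfrac{1}{(n-1)^2}\big(kM+k^2\sqrt{\gamma M C_1}\,n^{-p/2}\big)$; summing over $i$ and using $k\le n$ and $p\le2$, this is absorbed into a forcing $\tfrac{\tilde{\gamma} C_2}{\delta\gamma}\tfrac{k^{3-p/2}}{n^2}$ with $C_2=2M+\sqrt{\gamma M C_1}$ (and the $k=n$ bound improves to $\tfrac{n^{1-p/2}C_2}{\sigma^2}$). Re-running the two previous steps verbatim with this forcing in place of $\tfrac{\tilde{\gamma} M}{\delta\gamma}\tfrac{k^3}{n^2}$ gives \eqref{eq:sharpest_estimate_on_H_n}--\eqref{eq:sharpest_estimate_on_H_k}.

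\emph{Expected obstacle.} The delicate parts will be (a) the iterated-integral bookkeeping---recognizing precisely the quantities $\tilde{A}_k^\ell$, $\tilde{B}_k^\ell$ and correctly isolating the boundary contribution at level $n$---and (b) the off-diagonal cross-term estimate above, which is the one genuinely new ingredient; getting its powers of $k$ and $n$ to line up (via $k\le n$) so as to yield forcing of order $k^{3-p/2}/n^2$ rather than $k^3/n^2$ is exactly what propagates into the sharp exponent of Theorem \ref{th:main-theorem}.
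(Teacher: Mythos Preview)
Your proposal is correct and follows essentially the same route as the paper: tensorize the LSI, split the drift difference with parameter $\delta$, bound the conditional piece via (A.2) plus the chain rule, bound the empirical piece either crudely (case~1) or by expanding the square and controlling cross terms through $P^{3|2}$ and (A.2) (case~2), then apply Gr\"onwall and iterate down the hierarchy. The only differences are cosmetic: the paper conditions directly on $(x_1,x_2)$ for the cross term rather than first rewriting it as an integral against $P^3-\mu^{\otimes 3}$, and it tracks the constants a bit more carefully (using $(k-1)/(n-1)\le k/n$ rather than $(n-1)^{-2}\le 4n^{-2}$) to land on exactly $C_2=2M+\sqrt{\gamma M C_1}$.
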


Although we have no a priori information about $H^3_T$, the purpose of part (2) of Lemma \ref{le:first_entropy_estimate} is as follows. In the case of $r_c > 2$ in Theorem \ref{th:main-theorem}, we use Lemma \ref{le:first_entropy_estimate}(1) on a first pass through the argument, which ultimately leads to a bound of $H^k_T = O(k^3/n^2)$. This is suboptimal in the exponent of $k$. However, it implies $H^3_T = O(1/n^2)$, and we may then repeat the argument and apply the sharper Lemma \ref{le:first_entropy_estimate}(2) with $p=2$ to reduce the exponent on $k$ by one. This is carried out in Section \ref{se:pf-part(1)} and is similar to an argument in \cite{LackerFoundation}. The case where $0 < r_c < 2$ is trickier, because a first pass through the argument gives the worse bound $H^3_T = O(1/n^{r_c})$. The idea is to then apply Lemma \ref{le:first_entropy_estimate}(2) inductively. The exponent in $H^3_T=O(1/n^{r_c})$ improves after $m$ iterations from $r_c$ to $2\min(1,r_c(1-2^{-m}))$. If $r_c > 1$ then finitely many iterations yield the optimal exponent $2$, whereas if $r_c \le 1$ then the exponent merely approaches $2r_c$. (The constants grow with each iteration, so we may only iterate finitely many times.) This explains the exponent on $n$ in Theorem \ref{th:main-theorem}(2). Lemma \ref{le:iteration_Lemma} below summarizes the induction step, and Sections \ref{se:pf-part(1)-2} and \ref{se:pf-part(2)} implement it to complete the proof of Theorem \ref{th:main-theorem}.

\begin{proof}[Proof of Lemma \ref{le:first_entropy_estimate}]
We simplify the estimate \eqref{pf:main-ent-est1}, starting with the Fisher information term.
The LSI \eqref{eq:log-sobolev-inequality} is well known to tensorize \cite[Proposition 5.2.7]{Bakry}:
\begin{align*}
H(\nu \,|\, \mu_t^{\otimes{k}}) \leq \LSI  I(\nu \,|\, \mu_t^{\otimes{k}}), \qquad \forall t \geq 0, \  1 \le k \le n, \ \nu \in \P((\R^d)^k).
\end{align*}
Use this in inequality \eqref{pf:main-ent-est1} along with exchangeability to obtain
\begin{align}
H_t^k - H_s^k &\leq \frac{1}{\sigma^2} \int_s^t \int_{(\R^d)^k} \sum_{i=1}^k \left|\widehat{b}^k_i(u, x) - \langle \mu_u, b(u, x_i, \cdot) \rangle\right|^2 P^k_u(dx) du - \frac{\sigma^2}{4\LSI } \int_s^t H_u^k \, du \nonumber \\
	&= \frac{k}{\sigma^2} \int_s^t \int_{(\R^d)^k}  \left|\widehat{b}^k_1(u, x) - \langle \mu_u, b(u, x_1, \cdot) \rangle\right|^2 P^k_u(dx) du - \frac{\sigma^2}{4\LSI } \int_s^t H_u^k \, du.
 \label{eq:applying_estimate}
\end{align}
Let $t > 0$. Using the definition of $\widehat{b}^k_1$ from Lemma \ref{le:BBGKY}, we have
\begin{align*}
&\int_{(\R^d)^k}   \left|\widehat{b}^k_1(t, x) - \big\langle \mu_t, b(t, x_1, \cdot) \big\rangle\right|^2 P^k_t(dx) \\
& \ = \int_{(\R^d)^k}\bigg|\frac{1}{n-1}\sum_{j=2}^k \big( b(t,x_1,x_j) - \langle\mu_t, b(t, x_1, \cdot) \rangle\big) + \frac{n-k}{n-1}\big\langle P^{k+1|k}_{t,x} -  \mu_t, b(t, x_1, \cdot) \big\rangle\bigg|^2 P^k_t(dx).
\end{align*}
Note that the above is valid for any $1 \le k \le n$ with the convention that the second term inside the square equals $0$ for $k=n$. Use the inequality $(x+y)^2 \le (1+\delta)x^2 + (1+\delta^{-1})y^2$, valid for any $\delta>0$ and $x,y \in \R$, to bound this further by $(1+\delta^{-1})I+(1+\delta )II$, where we set
\begin{align*}
I &:= \int_{(\R^d)^k}\bigg|\frac{1}{n-1}\sum_{j=2}^k \big( b(t,x_1,x_j) - \langle\mu_t, b(t, x_1, \cdot) \rangle\big) \bigg|^2 P^k_t(dx), \\
II &:= \int_{(\R^d)^k}\left| \frac{n-k}{n-1}\big\langle P^{k+1|k}_{t,x} -  \mu_t, b(t, x_1, \cdot) \big\rangle\right|^2 P^k_t(dx).
\end{align*}

Let us first investigate the second term, assuming $k<n$ since it vanishes otherwise. Discard the constant $(n-k)/(n-1) \le 1$, and use Assumption (\ref{assumption:A}.2) to obtain
\begin{equation}
II \le \gamma \int_{(\R^d)^k}H\big(P^{k+1|k}_{t,x} \,\big|\, \mu_t \big)\, P^k_t(dx) = \gamma \big(H^{k+1}_t - H^k_t\big), \label{eq:first_bound_drift_second_term}
\end{equation}
with the second identity being the so-called \emph{chain rule} for relative entropy; indeed, this comes from the formula $P^{k+1}_t(x,x_{k+1})=P^k_t(x)P^{k+1|k}_{t,x}(x_{k+1})$ and the simple calculation
\begin{align*}
\int_{(\R^d)^k}H\big(P^{k+1|k}_{t,x} &\,\big|\, \mu_t \big)\, P^k_t(x)dx = \int_{(\R^d)^k} \bigg[\int_{\R^d} \log \frac{P^{k+1|k}_{t,x}(x_{k+1})}{\mu_t(x_{k+1})}\,P^{k+1|k}_{t,x}(x_{k+1})\,dx_{k+1}\bigg] P^k_t(x)dx \\
	&= \int_{(\R^d)^{k}}\int_{\R^d} \bigg[\log \frac{P^{k+1}(x,x_{k+1})}{\mu_t^{\otimes(k+1)}(x,x_{k+1})} - \log \frac{P^{k}(x)}{\mu_t^{\otimes k}(x)} \bigg] P^{k+1}_t(x,x_{k+1})dx_{k+1}dx \\
	&= H^{k+1}_t-H^k_t,
\end{align*}
where we write $(x,x_{k+1})$ here for a typical element of $(\R^d)^{k+1} \cong (\R^d)^k \times \R^d$.

We next study the first term, $I$.
For case (1) of the lemma, we use convexity of $|\cdot|^2$ to get
\begin{equation}
I \le \frac{ k-1 }{(n-1)^2}\int_{(\R^d)^k} \sum_{j=2}^k \big| \big( b(t,x_1,x_j) - \langle\mu_t, b(t, x_1, \cdot) \rangle\big) \big|^2 P^k_t(dx) \leq \frac{(k-1)^2}{(n-1)^2}M \le \frac{k^2}{n^2}M, \label{ineq:firstpassI}
\end{equation}
with the second step using exchangeability and the definition of $M$ from \eqref{eq:square_integrability}.
In case (2) of the lemma, when we have the assumption \eqref{assmp:H3}, we take a more refined approach.
Start by expanding the square and using exchangeability to obtain 
\begin{equation*}
I = \frac{k-1}{(n-1)^2}I_{\mathrm{diag}} + \frac{(k-1)(k-2)}{(n-1)^2}I_{\mathrm{cross}},
\end{equation*}
where we define $I_{\mathrm{diag}}$ and $I_{\mathrm{cross}}$ by
\begin{align*}
I_{\mathrm{diag}} &:= \int_{(\R^d)^2} \big| b(t,x_1,x_2) - \langle\mu_t, b(t, x_1, \cdot) \rangle \big|^2 P^2_t(dx), \\
I_{\mathrm{cross}} &:= \int_{(\R^d)^3} \big( b(t,x_1,x_2) - \langle\mu_t, b(t, x_1, \cdot) \rangle\big) \cdot \big (b(t,x_1,x_3) - \langle\mu_t, b(t, x_1, \cdot) \rangle\big) P^3_t(dx).
\end{align*}
We have immediately $I_{\mathrm{diag}} \le M$.
Writing $P^3_t(dx, dx_3) = P_t^2(dx) P_{t,x}^{3|2}(dx_3)$, we have
\begin{align*}
I_{\mathrm{cross}} & = \int_{(\R^d)^2} \big( b(t,x_1,x_2) - \langle\mu_t, b(t, x_1, \cdot) \rangle\big) \cdot \left\langle P_{t,x}^{3|2} - \mu_t ,  b(t,x_1,\cdot) \right\rangle P^2_t(dx) \\
& \leq \sqrt{M} \bigg(\int_{(\R^d)^2} \left| \left\langle P_{t,x}^{3|2} - \mu_t ,  b(t,x_1,\cdot)  \right\rangle \right|^2 P^2_t(dx_1, dx_2)\bigg)^{1/2} \\
& \leq \sqrt{\gamma M} \bigg(\int_{(\R^d)^2} H(P_{t,x}^{3|2} \,|\, \mu_t) \,P_t^2(dx) \bigg)^{1/2} \\
& \leq \sqrt{\gamma M H_T^3}.
\end{align*}
The first inequality comes from Cauchy-Schwarz, the second from the assumption \eqref{eq:transport_type_inequality}, and the last from the chain rule for relative entropy. Using the assumption \eqref{assmp:H3} on $H_T^3$, we thus obtain 
\begin{align*}
I & \leq \frac{k-1}{(n-1)^2} M + \sqrt{\gamma M C_1} \frac{(k-1)(k-2)}{(n-1)^2n^{p/2}}.
\end{align*}
Using $(k-1)/(n-1) \le k/n$, $n-1\ge n/2$, and $p \leq 2$, we have 
\begin{align}
I & \le 2M\frac{k}{n^2} + \sqrt{\gamma M C_1} \frac{k^2}{n^{2+\frac{p}{2}}}  \le C_2 \frac{ k^{2-p/2} }{ n^2 },  \label{eq:first_bound_drift_first_term}
\end{align}
where we recall that $C_2 = 2 M + \sqrt{\gamma M C_1}$. The rest of the proof is given for case (2), but recalling \eqref{ineq:firstpassI} we note that the same proof covers case (1) as long as we set $p=0$ and $C_2=M$.

Using these bounds on $I$ and $II$, the strategy in this proof will be to bound $H_t^k$ in terms of $H_t^{k+1}$, and we will need a bound on the final term $H_t^n$ of the iteration. We thus first apply \eqref{eq:applying_estimate} with $k=n$ and bound the first term of the right-hand side using \eqref{eq:first_bound_drift_first_term} (recalling that $II=0$ for $k=n$). We obtain
\begin{align*}
H_t^n - H_s^n \leq \int_s^t n^{1-p/2}\sigma^{-2} C_2 du - \frac{\sigma^2}{4\LSI }\int_s^t H_u^n du,
\end{align*}
Using Gronwall's inequality (see Lemma \ref{le:Gronwall_adapted} for a precise form tailored to our case), and setting $Z = \sigma^2/4\LSI $, we have 
\begin{align*}
H_t^n & \leq e^{-Zt} H_0^n + \int_0^t n^{1-p/2} C_2 e^{Z(s-t)}ds  \leq C_0 n^{p_1 -p_2} e^{-Zt} + n^{1-p/2}\sigma^{-2}C_2/Z.
\end{align*}

Let us now consider the case $1\le k<n$ and bound $H^k$ in terms of $H^{k+1}$. Returning to \eqref{eq:applying_estimate} and using \eqref{eq:first_bound_drift_first_term} and \eqref{eq:first_bound_drift_second_term}, we have, for $t > s \ge 0$,
\begin{align*}
H^k_t - H^k_s \le \int_s^t \bigg[\frac{(1+\delta^{-1})k^{3-p/2}}{\sigma^{2}n^2}& C_2 + \frac{\gamma k(1+\delta )}{\sigma^2}\big(H^{k+1}_u - H^k_u\big) - \frac{\sigma^2}{4\LSI }H^k_u\bigg]\,du.
\end{align*}
The two $H^k_u$ terms combine to become $-(Z+\tilde{\gamma} k)H^k_u$, where we recall $\tilde{\gamma}=\gamma(1+\delta)/\sigma^2$.
Simplify further by noting that $\frac{1 + \delta^{-1}}{\sigma^2 } = \frac{\tilde{\gamma}}{\delta \gamma}$. 
Apply Gronwall's inequalty (again see Lemma \ref{le:Gronwall_adapted} for a precise form) to get
\begin{align*}
H_t^k & \leq  e^{-(Z + \tilde{\gamma} k)t} H_0^k + \tilde{\gamma}\int_0^t e^{-(Z + k \tilde{\gamma}) (t-s)} \left( \frac{ k^{3-p/2}}{\delta\gamma  n^2} C_2 +  k H_s^{k+1} \right) ds,
\end{align*}Complete the proof via an elementary induction, iterating this inequality for $k+1, k+2, \ldots, n-1$, and then using the assumption on $H_0^k$. 
\end{proof}

\subsection{Estimating the iterated integrals}

Here we prove two lemmas that allow us to estimate the iterated integrals \eqref{eq:def_overline_A_k_ell} and \eqref{eq:def_tilde_B_k_ell} appearing in Lemma \ref{le:first_entropy_estimate}.

\begin{lemma}\label{le:estimate_on_B_tilde}
For any real $p,T \ge 0$ and integers $1 \le k < n$, 
\begin{align}
\sum_{\ell=k}^{n-1} \ell^p \tilde{B}_k^\ell(T) & \le 2k^p(1+p)^p e^{(\tilde{\gamma}p - Z) T}.
\label{eq:bound_for_sum_of_btilde}
\end{align}
\end{lemma}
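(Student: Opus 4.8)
The plan is to estimate the iterated integral $\tilde{B}_k^\ell(T)$ defined in \eqref{eq:def_tilde_B_k_ell} and then sum over $\ell$. The integral is an $(\ell-k+1)$-fold time-ordered integral over the simplex $T \ge t_{k+1} \ge \cdots \ge t_\ell \ge 0$ (after relabeling $t_k = T$), weighted by the prefactor $\prod_{j=k}^{\ell-1}\tilde{\gamma}j$ and the exponential $e^{-(Z+\tilde{\gamma}\ell)t_\ell - \sum_{j=k}^{\ell-1}(Z+\tilde{\gamma}j)(t_j-t_{j+1})}$. The first step is to rewrite the exponent. Telescoping the sum $\sum_{j=k}^{\ell-1}(Z+\tilde\gamma j)(t_j - t_{j+1})$ together with the $(Z+\tilde\gamma\ell)t_\ell$ term: the coefficient of $Z$ is $\sum_{j=k}^{\ell-1}(t_j-t_{j+1}) + t_\ell = t_k = T$, so the $Z$-part of the exponent is simply $-ZT$, which factors out as $e^{-ZT}$. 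The $\tilde\gamma$-part of the exponent is $\sum_{j=k}^{\ell-1} j(t_j - t_{j+1}) + \ell t_\ell = \sum_{j=k+1}^{\ell} t_j$ after an Abel summation (each $t_j$ for $k < j \le \ell$ picks up coefficient $j - (j-1) = 1$). So
\[
\tilde{B}_k^\ell(T) = e^{-ZT}\bigg(\prod_{j=k}^{\ell-1}\tilde\gamma j\bigg)\int_{T \ge t_{k+1}\ge\cdots\ge t_\ell\ge 0} e^{-\tilde\gamma\sum_{j=k+1}^\ell t_j}\,dt_\ell\cdots dt_{k+1}.
\]

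The second step is to bound the remaining simplex integral. The cleanest route is to drop the ordering constraint upward: since the integrand $e^{-\tilde\gamma\sum t_j}$ is nonnegative, the integral over the ordered simplex is at most the integral over $[0,T]^{\ell-k}$ divided by nothing — but that loses the $1/(\ell-k)!$ which we actually want to keep for convergence of the sum. A better approach: bound each $e^{-\tilde\gamma t_j} \le 1$ on the ordered region except we want to retain decay. Actually the right move is to extend each $t_j$ to $[0,\infty)$ while keeping the ordering, giving $\int_{t_{k+1}\ge\cdots\ge t_\ell\ge 0} e^{-\tilde\gamma\sum_{j=k+1}^\ell t_j}\,dt = \frac{1}{\tilde\gamma^{\ell-k}}\cdot\frac{1}{(\ell-k)!}\cdot(\ell-k)! \cdot$ — let me instead just say: substituting $s_i = t_{k+i}$ and using the standard formula, the ordered integral over $\{T \ge s_1 \ge \cdots \ge s_m \ge 0\}$ of $e^{-\tilde\gamma(s_1+\cdots+s_m)}$ equals $\frac{1}{m!}$ times the symmetrized integral, which is at most $\frac{1}{m!}\big(\int_0^\infty e^{-\tilde\gamma s}ds\big)^m = \frac{1}{m!\,\tilde\gamma^m}$, with $m = \ell - k$. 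Combining with the prefactor $\prod_{j=k}^{\ell-1}\tilde\gamma j = \tilde\gamma^{\ell-k}\frac{(\ell-1)!}{(k-1)!}$ gives
\[
\tilde{B}_k^\ell(T) \le e^{-ZT}\,\frac{(\ell-1)!}{(k-1)!\,(\ell-k)!} = e^{-ZT}\binom{\ell-1}{k-1}\cdot\frac{1}{?}
\]
— wait, $\frac{(\ell-1)!}{(k-1)!(\ell-k)!} = \binom{\ell-1}{k-1}$. Hmm, that grows in $\ell$ and the sum over $\ell$ would diverge, so this crude bound is too lossy; I must retain the extra $e^{-\tilde\gamma t_j}$ decay more carefully. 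The fix is to not extend to infinity but instead keep $T$ finite and use the exact simplex integral, OR more cleverly, bound $\binom{\ell-1}{k-1} \le 2^{\ell-1}$ is still too big. The actual resolution: the factors of $\tilde\gamma j$ in the numerator must be paired with the exponential decay: write $\tilde\gamma\int_0^{t_{j-1}} e^{-\tilde\gamma(\cdots)} \le$ something like $1$ only, but we have $j$ not $\tilde\gamma$. So $\prod \tilde\gamma j$ over $j=k,\dots,\ell-1$ against $\frac{1}{m!\tilde\gamma^m}$ gives $\frac{(\ell-1)!}{(k-1)!(\ell-k)!}$ which is genuinely $\binom{\ell-1}{\ell-k}$, growing.

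So the honest plan must use the $e^{(\tilde\gamma p - Z)T}$ in the target bound, which suggests the sum $\sum_\ell \ell^p \tilde B_k^\ell$ is where cancellation happens, not term-by-term. The key observation: by the same telescoping, $\sum_{\ell=k}^{n-1}\ell^p \tilde B_k^\ell(T)$ has a probabilistic interpretation — $\tilde B_k^\ell(T)$ is (up to $e^{-ZT}$) the probability that a pure-birth (Yule-type) Markov chain started at $k$ at time $0$ is at state $\ell$ at time $T$, where the chain jumps from state $j$ to $j+1$ at rate $\tilde\gamma j$. Indeed the forward equations for such a chain give exactly these integrals. Then $\sum_\ell \ell^p \tilde B_k^\ell(T) = e^{-ZT}\,\mathbb{E}[N_T^p \mathbf{1}_{N_T < n}]$ where $N_T$ is the chain value, $N_0 = k$. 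For a Yule process with rate $\tilde\gamma j$ from $j$, $\mathbb{E}[N_T] = k e^{\tilde\gamma T}$ and more generally the moment generating structure gives $\mathbb{E}[N_T^p] \le$ (for integer or via Jensen for real $p$) roughly $(k e^{\tilde\gamma T})^p$ times a combinatorial constant. The factor $(1+p)^p$ in the target is exactly the kind of constant one gets from $\mathbb{E}[N_T^p] \le C_p (\mathbb{E}[N_T])^p$-type bounds for such chains (e.g. $N_T$ is a negative-binomial-type variable, $N_T \sim$ sum of $k$ i.i.d. geometrics with mean $e^{\tilde\gamma T}$). Thus the plan: (i) recognize the Yule process, (ii) write $\sum_\ell \ell^p\tilde B_k^\ell(T) \le e^{-ZT}\mathbb{E}_k[N_T^p]$, (iii) control the $p$-th moment of $N_T$ by $2k^p(1+p)^p e^{\tilde\gamma p T}$ using that $N_T/k$ is stochastically dominated in a way that makes $\mathbb{E}[(N_T/k)^p] \le 2(1+p)^p e^{\tilde\gamma pT}$, e.g. via $N_T = \sum_{i=1}^k G_i$ with $G_i$ i.i.d. geometric of success probability $e^{-\tilde\gamma T}$, so $\mathbb{E}[G_i^p] \le$ Gamma-function bound $\le (1+p)^p e^{\tilde\gamma pT}$ and then $\mathbb{E}[N_T^p] \le k^{p-1}\sum \mathbb{E}[G_i^p]$ by power-mean — need $p \le 1$ care, handle $p \ge 1$ and $p\le 1$ separately or just use $\|N_T\|_p \le k^{1/p}\|G\|_p$ crudely giving $\mathbb{E}[N_T^p]\le k \mathbb{E}[G^p]$ when $p \le 1$ and $\le k^p\mathbb{E}[G^p]$... the bookkeeping here is the fiddly part.

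The main obstacle I anticipate is exactly this last moment estimate: getting the clean constant $2(1+p)^p$ uniformly in $p \ge 0$ (including small $p$) and $T \ge 0$, while correctly accounting for whether one uses $k^p$ or $k^{p}$ with a power-mean correction. An alternative to the probabilistic argument that avoids distributional identifications is a direct induction on $\ell - k$: establish a recursion $\tilde B_k^\ell(T) = \tilde\gamma k\int_0^T e^{-(Z+\tilde\gamma k)(T-s)}\tilde B_{k+1}^\ell(s)\,ds$ (peeling off the first variable $t_{k+1}$), then prove by induction on $\ell$ the bound $\sum_{\ell \ge k}\ell^p \tilde B_k^\ell(T) \le 2k^p(1+p)^p e^{(\tilde\gamma p - Z)T}$ directly — plugging the inductive hypothesis for index $k+1$ into the recursion and checking the resulting elementary integral inequality, where the factor $(1+p)^p$ and the $2$ give enough slack to close the induction (the inequality $(k+1)^p \le k^p(1 + 1/k)^p$ and $\int_0^T \tilde\gamma k\, e^{-(Z+\tilde\gamma k)(T-s)}e^{(\tilde\gamma p - Z)s}\,ds \le \frac{\tilde\gamma k}{\tilde\gamma k}e^{(\tilde\gamma p - Z)T}$-type estimates feed in). I would likely present this induction as the cleanest self-contained route, reserving the Yule-process picture as motivating remark. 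Either way, the crux is verifying that the constants $2$ and $(1+p)^p$ propagate without degradation, which is a short but delicate calculus check rather than anything deep.
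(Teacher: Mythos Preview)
Your proposal has the right high-level picture but contains a concrete error and leaves the actual work undone.

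\textbf{The error.} Your Abel summation is wrong: $\sum_{j=k}^{\ell-1} j(t_j - t_{j+1}) + \ell t_\ell = k t_k + \sum_{j=k+1}^{\ell} t_j$, not $\sum_{j=k+1}^{\ell} t_j$. You have dropped the term $k t_k = kT$. With the correct formula,
\[
\tilde{B}_k^\ell(T) = e^{-ZT}\,e^{-\tilde\gamma k T}\Bigl(\prod_{j=k}^{\ell-1}\tilde\gamma j\Bigr)\int_{T \ge t_{k+1}\ge\cdots\ge t_\ell\ge 0} e^{-\tilde\gamma\sum_{j=k+1}^\ell t_j}\,dt_\ell\cdots dt_{k+1},
\]
and this extra factor $e^{-\tilde\gamma kT}$ is essential: without it your Yule interpretation would not balance (the process started at $k$ must wait an exponential time of rate $\tilde\gamma k$ before its first jump).

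\textbf{The gap.} Your Yule-process identification is correct and is exactly what the paper exploits: $e^{ZT}\tilde B_k^\ell(T) = B_k^\ell(T)$ is the probability that a pure-birth chain with rate $\tilde\gamma j$ from state $j$, started at $k$, is in state $\ell$ at time $T$. But you stop at ``$\mathbb{E}[N_T^p]$ should be roughly $(ke^{\tilde\gamma T})^p$ times a combinatorial constant,'' and this is precisely the substance of the lemma. The paper does not argue probabilistically here. Instead it uses the \emph{explicit} negative-binomial formula $\tilde\gamma(k+m)B_k^{k+m}(t) = \tilde\gamma e^{-\tilde\gamma kt}\frac{(k+m)!}{m!(k-1)!}(1-e^{-\tilde\gamma t})^m$, and then reduces $\sum_\ell \ell^p B_k^\ell$ to a gamma-function computation: the inequality $(k+m)^{p-1}(k+m)! \le 2\Gamma(k+m+p)$ (via a gamma-ratio bound for $p\ge 1$ and Gautschi's inequality for $0\le p<1$; this is where the factor $2$ comes from), the closed-form identity $\sum_{m\ge 0}\frac{\Gamma(k+m+p)}{m!}(1-y)^m = y^{-(k+p)}\Gamma(k+p)$, and finally $\Gamma(k+p)/(k-1)! \le k^p(1+p)^p$. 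None of the soft arguments you sketch (power-mean, stochastic domination, or the induction-on-$k$ recursion) will produce the clean constant $2(1+p)^p$ uniformly in $p\ge 0$ without essentially redoing these gamma estimates; in particular your suggested induction step does not close because $(k+1)^p/k^p$ need not be absorbed by the integral factor when $p$ is large relative to $k$.

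In short: your route and the paper's are the same (explicit Yule/negative-binomial law), but the paper carries out the moment bound via concrete gamma-function inequalities, which is the entire content you labeled ``fiddly'' and deferred.
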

\begin{proof}
First, we introduce a related iterated integral in which $Z$ is removed from the exponent:
\begin{align}
B _k^\ell(t_k) & = \bigg( \prod_{j=k}^{\ell-1} \tilde{\gamma} j \bigg) \int_0^{t_k} \int_0^{t_{k+1}} \cdots \int_0^{t_{\ell-1}} e^{- \tilde{\gamma} \ell t_\ell -  \sum_{j=k}^{\ell-1} \tilde{\gamma}j (t_{j} - t_{j+1})} dt_{\ell} \ldots dt_{k+1},
\label{eq:B_coeff}
\end{align}
for $t_k \ge 0$.
Recognizing a telescoping sum in the exponent of \eqref{eq:def_tilde_B_k_ell}, we find $\tilde{B}_k^\ell(T) = e^{-ZT}  B_k^\ell(T)$. We thus focus on $B_k^\ell(T)$, which appeared also in \cite[page 25]{LackerFoundation}. Recognize first that $\tilde{\gamma}\ell B_k^\ell = h_k * h_{k+1} * \cdots * h_\ell$, where $h_j(t)=\tilde{\gamma}j e^{-\tilde{\gamma} j t}1_{[0,\infty)}(t)$  is the exponential density with parameters $\tilde{\gamma}j$. Using the expression for the convolution found in \cite[Proof of Lemma 5.2]{LackerFoundation} (applied with $a = \tilde{\gamma} k$ and $b = \tilde{\gamma} $), we obtain 
\begin{align*}
\tilde{\gamma}(k+\ell) B_k^{k+\ell}(t) = h_k * h_{k+1} * \cdots * h_{k+\ell}(t) = \tilde{\gamma} e^{-\tilde{\gamma} kt} \frac{(k+\ell)!}{\ell! (k-1)!}(1 - e^{-\tilde{\gamma} t})^\ell,
\end{align*}
We can then rewrite 
\begin{align}
	\sum_{\ell=k}^\infty \ell^p B_k^{\ell}(t) & = \sum_{\ell = 0}^\infty (k + \ell)^p B_k^{k+\ell}(t) = \frac{e^{-\tilde{\gamma}kt}}{(k-1)!}  \sum_{\ell = 0}^\infty  (k + \ell)^{p-1} \frac{(k+\ell)!}{\ell!}(1 - e^{-\tilde{\gamma}t})^\ell. \label{eq:sum_lp_B_first}
\end{align}

To proceed, we will use the inequality
\begin{align}
(k + \ell)^{p-1} (k+\ell)! \le 2\Gamma(k + \ell + p), \label{eq:Gammabound}
\end{align}
where $\Gamma(z) = \int_0^\infty e^{-t} t^{z-1} dt$ is the gamma function. To prove \eqref{eq:Gammabound}, we set $z = k+\ell$, so that $(k + \ell)^{p-1}(k+\ell)! = z^{p-1} \Gamma(z + 1) = z^p\Gamma(z)$.
In the case $p \ge 1$, a well known gamma ratio inequality \cite[Theorem 1, (5)]{Jameson2013} states that $z^p \Gamma(z) \leq \Gamma(z+p)$, which gives \eqref{eq:Gammabound} without the factor of 2. In the case $0 \le p < 1$, Gautschi's inequality \cite[inequality (7)]{Gautschi1959} states that $(z+1)^{p-1} \le \Gamma(z+p)/\Gamma(z+1)$, which implies
\begin{align*}
(k + \ell)^{p-1} (k+\ell)!=z^{p-1} \Gamma(z+1) \leq \bigg(\frac{z+1}{z}\bigg)^{1-p} \Gamma(z+p) \le 2^{1-p} \Gamma(z+p),
\end{align*}
where the last inequality uses $z \ge 1$.

For $0 < y \le 1$, note by Fubini's theorem and the definition of the gamma function that
\begin{align*}
\sum_{\ell = 0}^\infty \frac{\Gamma(k + \ell + p )}{\ell!}(1-y)^\ell &= \int_0^\infty \sum_{\ell = 0}^\infty \frac{1}{\ell!} u^{k+\ell+p-1} e^{-u} (1-y)^\ell \, du = \int_0^\infty u^{k+p-1} e^{-yu}\,du \\
	&= y^{-(k+p)} \Gamma(k+p).
\end{align*}
Applying this with $y= e^{-\tilde{\gamma}t}$ and then \eqref{eq:Gammabound} in \eqref{eq:sum_lp_B_first} gives 
\begin{align*}
\sum_{\ell=k}^\infty \ell^p B_k^{\ell}(t) &\le \frac{2e^{-\tilde{\gamma}kt}}{(k-1)!}  \sum_{\ell = 0}^\infty   \frac{\Gamma(k+\ell+p)}{\ell!}(1 - e^{-\tilde{\gamma}t})^\ell = 2e^{\tilde{\gamma}rt}\frac{\Gamma(k + p)}{(k-1)!}.
\end{align*}
Finally,  another gamma ratio inequality 
\cite[Theorem 2, (12)]{Jameson2013}
yields $\Gamma(k+p) \le \Gamma(k)k(k+p)^{p-1} = k!(k+p)^{p-1}$, which implies
\begin{equation*}
\frac{\Gamma(k+p)}{(k-1)!} \le k(k+p)^{p-1} \le (k+p)^p \le k^p(1+p)^p. \qedhere
\end{equation*}
\end{proof}

\begin{lemma}\label{le:estimate_on_A_tilde}
Denote $\alpha = \frac{Z}{\tilde{\gamma}}= \frac{\sigma^4}{4\gamma \LSI (1+\delta)}$.
Then, for integers $\ell \ge k \ge 1$,
\begin{align}
\sup_{t\geq 0}  \tilde{A}_k^{\ell} (t) = \prod_{i=k}^\ell \frac{i}{i + \alpha} \leq \left( \frac{k+\alpha}{\ell + 1 + \alpha}\right)^\alpha \le (1+\alpha)^\alpha \left( \frac{k}{\ell + 1}\right)^\alpha, \quad 1 \leq k \leq \ell. 
\label{eq:uniform_bound_A_k_n}
\end{align}
Moreover, for any $p > 0$ with $p \neq \alpha-1$, 
\begin{align}
\left(1 + \frac{(1+\alpha)^\alpha}{|p-\alpha + 1|}\right)^{-1}\frac{1}{n^2}\sum_{\ell = k}^{n-1} \ell^p \sup_{t \geq 0} \tilde{A}_k^\ell(t) & \leq \begin{cases}
                k^\alpha / n^{\alpha + 1 - p} & \text{if } p - \alpha > -1, \\
               k^{p+1}/n^2 & \text{if } p - \alpha < -1.
\end{cases}\label{eq:estimate_sum_ell_A_k_ell}
\end{align} 
\end{lemma}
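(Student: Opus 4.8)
\emph{Proof proposal.} The plan has three steps: (i) evaluate $\sup_{t\ge0}\tilde A_k^\ell(t)$ exactly by a change of variables; (ii) deduce the two pointwise bounds in \eqref{eq:uniform_bound_A_k_n} by a telescoping argument; (iii) sum against $\ell^p$ via elementary integral comparisons, treating the two ranges of $p-\alpha$ separately. For step (i), fix $t_k>0$ and change variables in \eqref{eq:def_overline_A_k_ell} from the ordered tuple $(t_{k+1},\dots,t_{\ell+1})$ to the increments $s_j:=t_j-t_{j+1}\ge0$, $j=k,\dots,\ell$. This map has unit Jacobian, carries the simplex $\{0\le t_{\ell+1}\le\cdots\le t_{k+1}\le t_k\}$ onto $\{s_j\ge0,\ \sum_{j=k}^\ell s_j\le t_k\}$, and turns the exponent into $\sum_{j=k}^\ell(Z+\tilde\gamma j)s_j$, so that
\[
\tilde A_k^\ell(t_k)=\Big(\prod_{j=k}^\ell\tilde\gamma j\Big)\int_{\{s_j\ge0,\ \sum_j s_j\le t_k\}}e^{-\sum_{j=k}^\ell(Z+\tilde\gamma j)s_j}\,ds_k\cdots ds_\ell .
\]
The right side is nondecreasing in $t_k$ (nonnegative integrand, growing domain) and, by monotone convergence, tends as $t_k\to\infty$ to the product of one-dimensional integrals $\prod_{j=k}^\ell\frac{\tilde\gamma j}{Z+\tilde\gamma j}=\prod_{j=k}^\ell\frac{j}{j+\alpha}$, using $Z+\tilde\gamma j=\tilde\gamma(j+\alpha)$. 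Hence $\sup_{t\ge0}\tilde A_k^\ell(t)=\prod_{i=k}^\ell\frac{i}{i+\alpha}$.

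For step (ii), it suffices to establish the per-factor inequality $\frac{i}{i+\alpha}\le\big(\frac{i+\alpha}{i+1+\alpha}\big)^\alpha$ for each $i\ge1$; multiplying over $i=k,\dots,\ell$ telescopes to $\prod_{i=k}^\ell\frac{i}{i+\alpha}\le\big(\frac{k+\alpha}{\ell+1+\alpha}\big)^\alpha$, and then $\ell+1+\alpha\ge\ell+1$ together with $k+\alpha\le(1+\alpha)k$ (valid since $k\ge1$) gives the last bound in \eqref{eq:uniform_bound_A_k_n}. Taking logarithms, the per-factor claim is $\log\frac{i+\alpha}{i}\ge\alpha\log\frac{i+1+\alpha}{i+\alpha}$, which follows from the monotonicity of $x\mapsto 1/x$: $\log\frac{i+\alpha}{i}=\int_i^{i+\alpha}\frac{dx}{x}\ge\frac{\alpha}{i+\alpha}\ge\alpha\int_{i+\alpha}^{i+1+\alpha}\frac{dx}{x}=\alpha\log\frac{i+1+\alpha}{i+\alpha}$.

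For step (iii), write $a_\ell:=\sup_{t\ge0}\tilde A_k^\ell(t)$ and use $a_\ell\le(1+\alpha)^\alpha(k/(\ell+1))^\alpha\le(1+\alpha)^\alpha k^\alpha\ell^{-\alpha}$, reducing the task to bounding $\sum_{\ell=k}^{n-1}\ell^{p-\alpha}$ (the excluded value $p=\alpha-1$ being exactly the borderline where this sum is logarithmically divergent). If $p-\alpha>-1$, an integral comparison gives $\sum_{\ell=k}^{n-1}\ell^{p-\alpha}\le\frac{n^{p-\alpha+1}}{p-\alpha+1}$ (compare with $\int_k^n x^{p-\alpha}\,dx$ when $p-\alpha\ge0$, and with $\int_{k-1}^{n-1}x^{p-\alpha}\,dx\le\int_0^n x^{p-\alpha}\,dx$ when $-1<p-\alpha<0$, using $\int_0^1 x^{p-\alpha}\,dx=\frac1{p-\alpha+1}\ge1$); dividing by $n^2$ gives $\frac1{n^2}\sum_\ell\ell^p a_\ell\le\frac{(1+\alpha)^\alpha}{p-\alpha+1}\cdot\frac{k^\alpha}{n^{\alpha+1-p}}\le\big(1+\frac{(1+\alpha)^\alpha}{|p-\alpha+1|}\big)\frac{k^\alpha}{n^{\alpha+1-p}}$. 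If $p-\alpha<-1$, isolate the $\ell=k$ term, bounding its contribution by $k^p a_k\le k^p\le k^{p+1}$ using $a_k=\frac{k}{k+\alpha}\le1$; for $\ell\ge k+1$, $\sum_{\ell=k+1}^{n-1}\ell^{p-\alpha}\le\int_k^\infty x^{p-\alpha}\,dx=\frac{k^{p-\alpha+1}}{|p-\alpha+1|}$, contributing at most $(1+\alpha)^\alpha k^\alpha\cdot\frac{k^{p-\alpha+1}}{|p-\alpha+1|}=\frac{(1+\alpha)^\alpha}{|p-\alpha+1|}k^{p+1}$; summing the two pieces yields $\big(1+\frac{(1+\alpha)^\alpha}{|p-\alpha+1|}\big)k^{p+1}$, and dividing by $n^2$ completes the estimate.

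I expect the only genuinely delicate point — and the main obstacle to obtaining the stated constant rather than a weaker one — to be the isolation of the $\ell=k$ term in the convergent case $p-\alpha<-1$: one must use the sharp value $a_k\le1$ there, not the cruder $a_k\le(1+\alpha)^\alpha$, since the latter would leave a factor $(1+\alpha)^\alpha\ge1$ multiplying the entire bound and violate the claimed inequality. Everything else is a routine combination of a change of variables, a telescoping product, and monotone integral comparisons.
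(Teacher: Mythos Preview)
Your proof is correct. Step (iii) matches the paper's argument essentially verbatim, including the crucial isolation of the $\ell=k$ term with the sharp bound $a_k\le 1$ in the convergent case. Steps (i) and (ii), however, differ from the paper in interesting ways. For (i), the paper expresses $\tilde A_k^\ell$ as a convolution $\tilde h_k*\cdots*\tilde h_\ell*1_{[0,\infty)}$, differentiates to see that $\tilde A_k^\ell$ is increasing, and then computes the $t\to\infty$ limit as the moment generating function $\E[e^{-Z(E_k+\cdots+E_\ell)}]$ of a sum of independent exponential random variables, which factors into the product $\prod_i \tilde\gamma i/(\tilde\gamma i+Z)$. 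Your change of variables to increments $s_j=t_j-t_{j+1}$ achieves the same factorization more directly and without probabilistic language. For (ii), the paper bounds $\sum_{i=k}^\ell\log\frac{i}{i+\alpha}\le -\sum_{i=k}^\ell\frac{\alpha}{i+\alpha}$ via $\log x\le x-1$, and then compares the sum to $\int_k^{\ell+1}\frac{\alpha}{x+\alpha}\,dx$. Your per-factor inequality $\frac{i}{i+\alpha}\le\big(\frac{i+\alpha}{i+1+\alpha}\big)^\alpha$ followed by telescoping is a neat alternative that packages the same two estimates (both ultimately rest on $\int_i^{i+\alpha}\frac{dx}{x}\ge\frac{\alpha}{i+\alpha}\ge\alpha\int_{i+\alpha}^{i+1+\alpha}\frac{dx}{x}$) into a single multiplicative step. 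Both routes are equally elementary; yours is slightly more self-contained, while the paper's probabilistic viewpoint connects naturally to the companion Lemma on $\tilde B_k^\ell$.
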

\begin{proof}
Let $\tilde{h}_j(t)= \tilde{\gamma}j e^{-(Z+\tilde{\gamma}j) t}1_{[0,\infty)}(t)$ for each $j \in \N$.
Note that $\tilde{A}_k^\ell$ and $\tilde{B}_k^\ell$ can be expressed as convolutions,
\begin{align*}
\tilde{A}_k^\ell = \tilde{h}_k * \cdots * \tilde{h}_\ell * 1_{[0,\infty)}, \quad \tilde{\gamma}\ell\tilde{B}_k^\ell = \tilde{h}_k * \cdots \tilde{h}_\ell.
\end{align*}
It follows that $d\tilde{A}_k^\ell/dt = \tilde{\gamma} \ell \tilde{B}_k^\ell$. We observed in \eqref{eq:B_coeff} and just after that $\tilde{\gamma}\ell \tilde{B}_k^\ell(t) = e^{-Zt} h_k * \cdots * h_\ell(t)$, where $h_j(t)= \tilde{\gamma}j e^{- \tilde{\gamma}j t}1_{[0,\infty)}(t)$ is an exponential probability density for each $j \in \N$. 
In other words,
\begin{align*}
\frac{d}{dt} \tilde{A}_k^\ell (t)  = e^{-Zt}h_k * \cdots * h_\ell(t).
\end{align*}
In particuar, $\tilde{A}_k^\ell$ is increasing.
Consider independent exponential random variables $(E_i)_{i\in\N}$ with parameters $\tilde{\gamma} i$. The density of the sum $E_k + \ldots + E_\ell$ is precisely $h_k * \cdots * h_\ell$.
By integrating from $0$ and $t$, we compute
\begin{align*}
\sup_{t\geq 0} \tilde{A}_k^\ell (t) &= \int_0^\infty e^{-Zt} h_k * \cdots * h_\ell(t)\,dt  = \E\left[e^{-Z\left(E_k + \ldots + E_\ell\right)}\right] = \prod_{i=k}^\ell \frac{\tilde{\gamma} i}{\tilde{\gamma} i + Z},
\end{align*}
which proves the first identity in \eqref{eq:uniform_bound_A_k_n}.
Use $\log x\le x - 1$ for $x > 0$ to estimate
\begin{align*}
\sum_{i=k}^\ell \log\frac{ i}{ i + \alpha} \le - \sum_{i=k}^\ell  \frac{\alpha}{i+\alpha} \le -\int_k^{\ell+1} \frac{\alpha}{x+\alpha}\,dx = \alpha\log\frac{k+\alpha}{\ell+1+\alpha},
\end{align*}
and exponentiate to get the first inequality in \eqref{eq:uniform_bound_A_k_n}. The last inequality of \eqref{eq:uniform_bound_A_k_n} follows from $\alpha > 0$ and $k+\alpha\le k(1+\alpha)$.
To prove \eqref{eq:estimate_sum_ell_A_k_ell}, consider two cases:

\begin{enumerate}
\item If $p - \alpha > -1$, we can bound $\sum_{\ell = k}^{n-1} \ell^{p-\alpha} \leq \int_0^{n} x^{p-\alpha}dx = n^{p - \alpha + 1} / (p - \alpha + 1)$.  Thus, 
\begin{align*}
\frac{1}{n^2}\sum_{\ell = k}^{n-1} \ell^p \sup_{t \geq 0} \tilde{A}_k^\ell(t) \leq \frac{(1 + \alpha)^\alpha k^\alpha }{n^2} \sum_{\ell = k}^{n-1} \ell^{p-\alpha} \leq \frac{(1 + \alpha)^\alpha}{p - \alpha + 1} \frac{k^\alpha}{n^{1 +\alpha -p}}.
\end{align*}
\item For $p - \alpha < -1$, we can bound $\sum_{\ell = k+1}^{n-1} \ell^{p-\alpha} \leq \int_k^\infty x^{p-\alpha}dx = k^{p - \alpha + 1} / (\alpha -p - 1)$. Then,
\begin{align*}
\frac{1}{n^2}\sum_{\ell = k}^{n-1} \ell^p \sup_{t \geq 0} \tilde{A}_k^\ell(t) & = \frac{k^p}{n^2} \sup_{t \geq 0} \tilde{A}_k^k(t) + \frac{1}{n^2}\sum_{\ell = k+1}^{n-1} \ell^p \sup_{t \geq 0} \tilde{A}_k^\ell(t) \\
& \leq \frac{k^p}{n^2} \frac{k}{k+\alpha} + \frac{(1 + \alpha)^\alpha k^\alpha}{n^2}\sum_{\ell = k+1}^{n-1} \ell^{p-\alpha}\\
& \leq \frac{k^{p+1}}{n^2} + \frac{(1 + \alpha)^\alpha}{\alpha -p -1} \frac{k^{p+1}}{n^2}.
\end{align*}
\end{enumerate}
{\ } \vskip-0.85cm
\end{proof}

\subsection{Proof of Theorem \ref{th:main-theorem}, for $r_c > 2$} \label{se:pf-part(1)}
We first apply part (1) of Lemma \ref{le:first_entropy_estimate}.
Combining \eqref{eq:first_estimate_on_H_n} and \eqref{eq:first_estimate_on_H_k} and the assumption $H^k_0 \le C_0(k/n)^2$ yields
\begin{align*}
H_T^k \leq \frac{C_0}{n^{2}} \sum_{\ell=k}^{n-1}\ell^{2} \tilde{B}_k^\ell(T) + \frac{M}{\delta \gamma n^2} \sum_{\ell=k}^{n-1} \ell^{2} \tilde{A}_k^\ell(T) + n \tilde{A}_{k}^{n-1}(T)\left(C_0 e^{-ZT} + 4 \LSI M \sigma^{-4}\right). 
\end{align*}
Applying Lemma \ref{le:estimate_on_A_tilde} for the last term, as well as Lemma \ref{le:estimate_on_B_tilde} (with $p=2$) for the first term, 
\begin{align}
H_T^k \leq 18C_0\frac{k^{2}}{n^{2}} e^{(2\tilde{\gamma} - Z)T}+ \frac{M}{\delta \gamma n^2} \sum_{\ell=k}^{n-1} \ell^{2} \tilde{A}_k^\ell(T) + (1+\alpha)^\alpha \frac{k^\alpha}{n^{\alpha-1}}\left(C_0 e^{-ZT} + 4 \LSI M \sigma^{-4}\right). \label{eq:first_pass_argument}
\end{align}
In the following, we write $C$ to denote a constant which can depend only on $(C_0,\gamma,\sigma,M,\LSI )$, and may change from line to line but never depends on $(n,k,T)$.
Recall that $\alpha = \frac{Z}{\tilde{\gamma}}=\frac{\sigma^4}{4\gamma \LSI (1 + \delta)} = \frac{1+r_c}{1+\delta}$.
Since $r_c > 2$, we may choose $\delta>0$ such that $\alpha  > 3$. Clearly $e^{(2\tilde{\gamma}  - Z)T} = e^{\tilde{\gamma}(2- \alpha)T}\leq 1$. Apply Lemma \ref{le:estimate_on_A_tilde} (with $p=2$) in the second term of \eqref{eq:first_pass_argument}, noting that $2-\alpha < -1$, to get
\begin{align*}
H_T^k \leq C \left(\frac{k^2}{n^2} + \frac{k^3}{n^2} + \frac{k^{\alpha}}{n^{\alpha-1}} \right).	
\end{align*}
Noting that $\alpha > 3$, this yields the suboptimal $H^k_T \le Ck^3/n^2$. This is where part (2) of Lemma \ref{le:first_entropy_estimate} enters the picture, allows us to sharpen the $k$ exponent.

Specifically, since we now know $H_T^3 \leq C / n^2$, we can apply part (2) of Lemma \ref{le:first_entropy_estimate},  with $p = 2$, to get $H_T^n \leq C$ and 
\begin{align*}
H_T^k & \leq  \frac{C_0}{n^2}\sum_{\ell=k}^{n-1} \ell^2 \tilde{B}_k^\ell(T) + \frac{C}{  n^2} \sum_{\ell=k}^{n-1} \ell \tilde{A}_k^\ell(T) + \tilde{A}_{k}^{n-1}(T) H_T^n.
\end{align*}
The first term is bounded by $18C_0 (k/n)^2$, by  Lemma \ref{le:estimate_on_B_tilde}. For the third term, we use Lemma \ref{le:estimate_on_A_tilde} to get $\tilde{A}_{k}^{n-1}(T) H_T^n \le C  (k/n)^\alpha \le C(k/n)^2$ since $\alpha > 2$.  For the middle term, we use Lemma \ref{le:estimate_on_A_tilde} (with $p=1$), noting that $1-\alpha < -1$:
\begin{align*}
\frac{1}{n^2} \sum_{\ell=k}^{n-1} \ell \tilde{A}_k^\ell(T) \leq \left(1 + \frac{(1+\alpha)^\alpha}{\alpha-3}\right)(k/n)^2.
\end{align*}
This completes the proof of Theorem \ref{th:main-theorem} in the case $r_c > 2$. \hfill\qedsymbol

\subsection{An inductive lemma} \label{se:induction}

In order to prove Theorem \ref{th:main-theorem} in the remaining case $0 < r_c \le 2$, we first state a lemma which inductively applies part (2) of Lemma \ref{le:first_entropy_estimate}, as was described in the paragraph following the statement of the lemma.
In the following, define 
\begin{align*}
r_c:= \frac{\sigma^4}{4\gamma \LSI }-1, \quad \text{and} \quad q_m:=2(1-2^{-m}), \text{ for } m \in \N,
\end{align*}
and note that $q_{m+1} = 1 + q_m/2$.

\begin{lemma}\label{le:iteration_Lemma}
Let $0 <r < 2$ satisfy $r < r_c$. Suppose $m \in \N$ is such that $r q_m < 2$. Suppose that there exists a constant $C_0$ such that $H_0^k \leq C_0 k^{1 + r}/n^{\min(2, 2r)}$, for all $k=1, \ldots, n$. Suppose that $\sup_{T \ge 0} H_T^3 \leq C_m n^{-rq_m}$ for some constant $C_m>0$. Then, there exists a constant $C_{m}'>0$ depending only on $(C_m, \sigma, \LSI , \gamma, C_0, M)$ such that, by defining $C_{m+1} = \frac{C_m'}{(r_c-r)|2 - rq_{m+1}| }$, we have:
\begin{enumerate}
\item If $rq_{m+1} < 2$, then $\sup_{T \ge 0} H_T^3 \leq C_{m+1} n^{-r q_{m+1}} $, 
\item If $rq_{m+1} > 2$, then $\sup_{T \ge 0} H_T^3 \leq C_{m+1} n^{-2}$.
\end{enumerate}
\end{lemma}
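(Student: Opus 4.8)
The plan is to feed the hypothesis $\sup_{T\ge0}H_T^3\le C_m n^{-rq_m}$ back into part~(2) of Lemma~\ref{le:first_entropy_estimate} (so the quantity called $p$ there is $rq_m$, which lies in $[0,2]$ precisely because $rq_m<2$, and $C_2=2M+\sqrt{\gamma M C_m}$), applied at $k=3$, and to read a sharper bound on $H_T^3$ off of \eqref{eq:sharpest_estimate_on_H_k}. The initial data enter with exponents $(p_1,p_2)=(1+r,\min(2,2r))$, i.e.\ in the form $H_0^k\le C_0 k^{1+r}/n^{\min(2,2r)}$; when $r\ge1$ the power $p_1$ exceeds $2$, but this is harmless, since it only enters through Lemma~\ref{le:estimate_on_B_tilde}, which places no restriction on the power, provided $\delta$ is chosen small enough that $p_1=1+r\le\alpha$ — possible because $r<r_c$ forces $1+r<1+r_c=\lim_{\delta\downarrow0}\alpha$.

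Write $\alpha=Z/\tilde\gamma=(1+r_c)/(1+\delta)$ for the one free parameter. The key choice is to take $\delta>0$ so that
\[
\alpha=1+r+\tfrac12\min\bigl(r_c-r,\ |2-rq_{m+1}|\bigr),
\]
which is admissible since this value lies strictly between $1+r$ and $1+r_c$. Two consequences drive the argument. First, $\delta\ge(r_c-r)/\bigl(2(1+r_c)\bigr)$, so that $1/(\delta\gamma)\le C/(r_c-r)$ with $C$ depending only on $(\sigma,\LSI,\gamma)$; this produces the factor $(r_c-r)^{-1}$ in $C_{m+1}$. Second, writing $\pi:=2-rq_m/2$ for the power of $\ell$ in the middle sum of \eqref{eq:sharpest_estimate_on_H_k}, one computes $\pi-\alpha+1=(2-rq_{m+1})-\tfrac12\min(r_c-r,|2-rq_{m+1}|)$, so that $|\pi-\alpha+1|\ge\tfrac12|2-rq_{m+1}|$; this produces the factor $|2-rq_{m+1}|^{-1}$.

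Now I would estimate the three terms of \eqref{eq:sharpest_estimate_on_H_k} at $k=3$, discarding the factors $e^{(\tilde\gamma p_1-Z)T}$ and $e^{-ZT}$ (both $\le1$ since $p_1\le\alpha$; this is exactly what yields uniformity in $T$). The $\tilde B$-term is $\le C_0 n^{-p_2}$ by Lemma~\ref{le:estimate_on_B_tilde}, and $p_2=\min(2,2r)$ is $\ge rq_{m+1}$ when $rq_{m+1}<2$ and equals $2$ when $rq_{m+1}>2$ (the latter forcing $r>1$). For the middle term, apply \eqref{eq:estimate_sum_ell_A_k_ell} with power $\pi$: when $rq_{m+1}<2$ one is in the case $\pi-\alpha>-1$, with power of $n$ equal to $\alpha+1-\pi=\alpha-1+rq_m/2>rq_{m+1}$ (using $\alpha>1+r$), while when $rq_{m+1}>2$ one is in the case $\pi-\alpha<-1$, with power of $n$ exactly $2$; either way the constant is at most a fixed multiple of $|2-rq_{m+1}|^{-1}$ times a $(\sigma,\LSI,\gamma)$-dependent quantity, and it carries the factor $1/(\delta\gamma)$. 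For the last term, $\tilde A_3^{n-1}(T)\le(1+\alpha)^\alpha(3/n)^\alpha$ by \eqref{eq:uniform_bound_A_k_n}, and multiplying by $H_T^n\le C_0 n^{|r-1|}+Cn^{1-rq_m/2}$ from \eqref{eq:sharpest_estimate_on_H_n} gives powers of $n$ equal to $|r-1|-\alpha$ and $1-rq_m/2-\alpha$, each $\le-rq_{m+1}$ when $rq_{m+1}<2$ and $\le-2$ when $rq_{m+1}>2$, by elementary comparison using $\alpha>1+r$ and the case hypothesis. Collecting the three bounds, and noting that $(1+\alpha)^\alpha\le(2+r_c)^{2+r_c}$ and $3^\alpha\le3^{1+r_c}$ are controlled by $(\sigma,\LSI,\gamma)$, yields $\sup_{T\ge0}H_T^3\le C_{m+1}n^{-rq_{m+1}}$ (resp.\ $\le C_{m+1}n^{-2}$) with $C_{m+1}=C_m'/\bigl((r_c-r)|2-rq_{m+1}|\bigr)$ and $C_m'$ depending only on $(C_m,\sigma,\LSI,\gamma,C_0,M)$.

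The main obstacle is the middle term, which carries the ``fresh'' information about $H^3$: pushing its power of $n$ up to the target requires $\alpha>1+r$, i.e.\ spending a positive fraction of the budget $r_c-r$ (forcing $\delta\gtrsim r_c-r$, hence $(r_c-r)^{-1}$), and the gamma-ratio constant of Lemma~\ref{le:estimate_on_A_tilde} degenerates as $\pi-\alpha+1\to0$, i.e.\ as $rq_{m+1}\to2$, forcing $|2-rq_{m+1}|^{-1}$; balancing these two competing demands is the content of the displayed choice of $\alpha$. Everything else is routine: verifying $T$-uniformity (only the discarded exponentials depend on $T$), confining the final constant's dependence to $(C_m,\sigma,\LSI,\gamma,C_0,M)$ (which works because $\alpha\le1+r_c$ and $1/\delta\le C/(r_c-r)$ are both controlled by $(\sigma,\LSI,\gamma)$), and checking the exponent inequalities case by case in $r<1$ versus $r\ge1$.
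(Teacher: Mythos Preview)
Your proposal is correct and follows the same overall approach as the paper: apply Lemma~\ref{le:first_entropy_estimate}(2) with $p=rq_m$ at $k=3$, then bound each of the three resulting terms via Lemmas~\ref{le:estimate_on_B_tilde} and~\ref{le:estimate_on_A_tilde}. The only difference is the parameter choice: the paper takes simply $\alpha=1+r$ (i.e., $\delta=(r_c-r)/(1+r)$), which makes the exponential in Lemma~\ref{le:estimate_on_B_tilde} exactly $1$ and the denominator $|\pi-\alpha+1|$ in Lemma~\ref{le:estimate_on_A_tilde} exactly $|2-rq_{m+1}|$, so that the two factors $(r_c-r)^{-1}$ and $|2-rq_{m+1}|^{-1}$ arise separately and no balancing is needed; your more elaborate choice $\alpha=1+r+\tfrac{1}{2}\min(r_c-r,|2-rq_{m+1}|)$ also works but introduces avoidable complexity.
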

\begin{proof}
Recall that $\alpha = \frac{Z}{\tilde{\gamma}} = \frac{\sigma^4}{4\gamma \LSI (1+\delta)}=(1+r_c)/(1+\delta)$ where the parameter $\delta > 0$ can be chosen freely.
Let $r^*=\min(2,2r)$. Choose $\delta = (r_c-r)/(1+r)$, so that $\alpha=1+r$.
We apply Lemma \ref{le:first_entropy_estimate} and deduce the existence of a constant $\tilde{C}_m>0$, depending only on $(C_m, C_0, \gamma, \LSI , \sigma, M)$ such that 
\begin{align}
H_T^n &\leq \tilde{C}_m (n^{1+r - r^*} + n^{1- \frac{rq_m}{2} }  ), \label{pf:iteration1} \\
H_T^k & \leq \frac{C_0}{n^{r^*}} \sum_{\ell=k}^{n-1} \ell^{1+r} \tilde{B}_k^\ell(T) + \frac{\tilde{C}_m}{\delta n^2} \sum_{\ell=k}^{n-1} \ell^{2- \frac{rq_m}{2}} \tilde{A}_k^\ell(T) + \tilde{A}_{k}^{n-1}(T) H_T^n,  \ 1\leq k < n, \label{pf:iteration2}
\end{align}
for all $T \ge 0$. 
For the first term of the right hand side of \eqref{pf:iteration2}, Lemma \ref{le:estimate_on_B_tilde} yields
\begin{align*}
\sup_{T \ge 0} \frac{C_0}{n^{r^*}} \sum_{\ell=k}^{n-1} \ell^{1+r} \tilde{B}_k^\ell(T) \leq \sup_{T \ge 0} 2 C_0 \frac{k^{1+r}}{n^{r^*}} (2+r)^{1+r} e^{(\tilde\gamma (1+r) - Z)T} \leq 128 C_0 \frac{k^{1+r}}{n^{r^*}},
\end{align*}
where the last step used $\tilde\gamma (1+r) - Z = 0$, and also $r \leq 2$ so that $2(2+r)^{1+r} \le 128$.
Since $r \le 2$, we have $1/\delta \le 3/(r_c - r)$, and so
\begin{align}
H_T^k & \leq 128 C_0 \frac{k^{1+r}}{n^{r^*}} +  \frac{3 \tilde{C}_m}{(r_c-r) n^2} \sum_{\ell=k}^{n-1} \ell^{2- \frac{r q_m}{2}} \tilde{A}_k^\ell(T) + \tilde{A}_{k}^{n-1}(T) H_T^n. \label{pf:iteration-new2}
\end{align}
We next wish to combine the exponents.

Suppose first that $r q_{m+1} < 2$. Then $2 - \frac{rq_m}{2} - \alpha > -1$, and Lemma \ref{le:estimate_on_A_tilde} with $\alpha=1+r$ and $q=2 - \frac{rq_m}{2}$ yields 
\begin{align*}
H_T^k & \leq 128 C_0 \frac{k^{1+r}}{n^{r^*}} + \frac{3\tilde{C}_m}{r_c-r}\left(1 + \frac{(2+r)^{1+r}}{2 - rq_m/2 - r} \right) \frac{k^{1+r}}{n^{2 + r - (2 - r q_m/2)}} + (2+r)^{1+r} \frac{k^{1+r}}{n^{1+r}}H_T^n\\
&\le 128 C_0 \frac{k^{1+r}}{n^{r^*}} + \frac{3\tilde{C}_m}{r_c-r}\left(1 + \frac{64}{2 - rq_{m+1}} \right) \frac{k^{1+r}}{n^{rq_{m+1}}} + 64\frac{k^{1+r}}{n^{1+r}}H_T^n,
\end{align*}
where we used $q_{m+1} = 1 + q_m/2$ and $(2+r)^{1+r} \le 64$, the latter following from $r \le 2$. 
To simplify this further, we claim first that
\begin{equation}
1+r-r^* < 1 - \frac{rq_m}{2}. \label{pf:iteration-new1}
\end{equation}
Indeed, if $r>1$, then $r^* = 2$, and \eqref{pf:iteration-new1} follows from the assumed inequality $r+rq_m/2 = rq_{m+1} < 2$. If instead $r \leq 1$, then $r^* = 2r$, and \eqref{pf:iteration-new1} follows simply from $q_m < 2$.
Now, using \eqref{pf:iteration-new1}, we see that  \eqref{pf:iteration1} implies $H_T^n \leq 2\tilde{C}_m n^{1 - \frac{rq_m}{2}}$. Using again $q_{m+1} = 1 + q_m/2$, we deduce that $H_T^n/ n^{1+r} \le 2\tilde{C}_m / n^{ rq_{m+1}}$. 
Also, rearranging \eqref{pf:iteration-new1} shows that $r^* \ge rq_{m+1}$, so $n^{-r^*} \le n^{-rq_{m+1}}$.
We can now simplify the first and last terms in \eqref{pf:iteration-new2} to get
\begin{align*}
H_T^k & \le 128 C_0 \frac{k^{1+r}}{n^{rq_{m+1}}} + \frac{3\tilde{C}_m}{r_c-r}\left(1 + \frac{64}{2 - rq_{m+1}} \right) \frac{k^{1+r}}{n^{rq_{m+1}}} + 128\tilde{C}_m\frac{k^{1+r}}{n^{rq_{m+1}}}.
\end{align*}
Evaluating this for $k=3$ and using $1+r \le 3$ gives 
\begin{align*}
H_T^3 n^{rq_{m+1}} & \leq 3456 C_0 + \frac{81\tilde{C}_m}{r_c-r}\left(1 + \frac{64}{2 - rq_{m+1}} \right) + 3456 \tilde{C}_m,
\end{align*}
Using $r_c-r \le r_c$ and $2-rq_{m+1} \le 2$, we may put with each term the same denominator  $(r_c-r)(2-rq_{m+1})$. This completes the proof in the case $rq_{m+1} < 2$.

If instead $r q_{m+1} > 2$, then $r >1$ because $q_{m+1} < 2$, and thus $r^* = 2$.  Moreover, $1 - \frac{r q_m}{2} < r -1$, and thus $H_T^n \leq 2 \tilde{C}_m n^{r-1}$ by \eqref{pf:iteration1}. Apply Lemma \ref{le:estimate_on_A_tilde} to obtain this time
\begin{align*}
H_T^k & \leq 128 C_0 \frac{k^{1+r}}{n^{2}} + \frac{3\tilde{C}_m}{ r_c-r }\left(1 + \frac{(2+r)^{1+r}}{rq_{m+1} - 2} \right) \frac{k^{3 - \frac{r q_m}{2}}}{n^2} + (2+r)^{1+r} \frac{k^{1+r}}{n^{1+r}}2\tilde{C}_m n^{r-1}.
\end{align*}
Apply this with $k=3$ to complete the proof.
\end{proof}

\subsection{Proof of Theorem \ref{th:main-theorem}, for $1 < r_c \le 2$} \label{se:pf-part(1)-2}
We begin exactly as in Section \ref{se:pf-part(1)}, applying  part (1) of Lemma \ref{le:first_entropy_estimate} to deduce the inequality \eqref{eq:first_pass_argument}.
We will again use $C$ to denote a constant which depends only on $(C_0,\gamma,\sigma,M,\LSI )$, and may change from line to line, but never depends on $(n,k,T)$.
Recall that $\alpha = \frac{Z}{\tilde{\gamma}} =\frac{\sigma^4}{4\gamma \LSI (1 + \delta)}=\frac{1+r_c}{1+\delta}$, where $\delta > 0$ can be chosen freely. We choose here $\delta$ so that $2 < \alpha < 3$ and such that $r := \alpha - 1 \notin \{2/q_m : m \in \N\}$. Note then that $1 < r < r_c \le 2$.

We simplify \eqref{eq:first_pass_argument} in this case as follows.
For the first term, note that $e^{(2\tilde{\gamma} - Z)T} = e^{\tilde{\gamma}(2-\alpha)T}\leq 1$.  For the second and third terms, apply Lemma \ref{le:estimate_on_A_tilde} (with $p=2$), noting that $2 - \alpha > -1$:
\begin{align*}
H_T^k \le C \left( \frac{k^2}{n^2} + \frac{k^{r+1}}{n^{r}} +   \frac{k^{r+1} }{n^{r}} \right).
\end{align*}
Setting $k=3$ yields $H_T^3 \leq C / n^r = C / n^{ r q_1}$.
The idea is to now repeatedly apply Lemma \ref{le:iteration_Lemma}, $m^*-1$ times, where $m^*$ is the smallest integer such that $r q_{m^*} > 2$, which exists because $\lim_{m\to\infty} r q_m = 2 r > 2$. Note $r q_m \neq 2$ for all $m$, by design. This repeated application of  Lemma \ref{le:iteration_Lemma} ultimately yields $H_T^3 \leq C /n^2$.
We may now conclude exactly as in the last paragraph of Section \ref{se:pf-part(1)}, noting that $\alpha > 2$. \hfill\qedsymbol

\subsection{Proof of Theorem \ref{th:main-theorem}, for $0 < r_c \le 1$} \label{se:pf-part(2)}
Let $0<\epsilon_1 < \epsilon_2 < r_c$. We first claim that we can find a constant $C'$ depending only on $(\gamma,\sigma,M,\LSI,\epsilon_1,\epsilon_2,  C_0^{(\epsilon_2-\epsilon_1)/2, \epsilon_2-\epsilon_1})$, such that
\begin{align}
\sup_{T \ge 0} H^3_T \le C' / n^{2(r_c-(\epsilon_2-\epsilon_1))}. \label{pf:case3-Hk}
\end{align}
Abbreviate $\epsilon:=\epsilon_2-\epsilon_1$ and $r := r_c - \epsilon/2$, and  choose $\delta>0$  such that $\alpha = 1 + r \in (1,2)$. 
Let $p_1=1 + r_c - \epsilon/2$ and $p_2 = 2r_c - \epsilon$.
We have by assumption $H(P_0^k \,|\, \mu_0^{\otimes k }) \le  C_0^{\epsilon/2, \epsilon } k^{p_1} /n^{p_2}$.
Apply Lemma \ref{le:first_entropy_estimate}(1), plugging \eqref{eq:first_estimate_on_H_n} into \eqref{eq:first_estimate_on_H_k} and using $p_1-p_2\le 1$, we find
\begin{align*}
H_T^k \leq \frac{C_0^{\epsilon/2, \epsilon}}{n^{p_2}} \sum_{\ell=k}^{n-1}\ell^{p_1} \tilde{B}_k^\ell(T) + \frac{M}{\delta \gamma n^2} \sum_{\ell=k}^{n-1} \ell^{2} \tilde{A}_k^\ell(T) + n \tilde{A}_{k}^{n-1}(T)\left(C_0 e^{-ZT} + 4 \LSI M \sigma^{-4}\right). 
\end{align*}
Apply Lemma \ref{le:estimate_on_A_tilde} for the last term, as well as Lemma \ref{le:estimate_on_B_tilde} for the first term:
\begin{align}
H_T^k \leq 18 C_0^{\epsilon/2, \epsilon}\frac{k^{p_1}}{n^{p_2}} e^{(\tilde{\gamma} p_1 - Z)T}+ \frac{M}{\delta \gamma n^2} \sum_{\ell=k}^{n-1} \ell^{2} \tilde{A}_k^\ell(T) + (1+\alpha)^\alpha \frac{k^\alpha}{n^{\alpha-1}}\left(C_0 e^{-ZT} + 4 \LSI M \sigma^{-4}\right), \label{eq:first_pass_argument-lastcase1}
\end{align}
where we used $p_1 \le 2$ to get $2(1+p_1)^{p_1} \le 18$.
Simplify the first term by noting that $p_1= \alpha$ and so $e^{(\tilde{\gamma}p_1 - Z)T} = 1$. Apply Lemma \ref{le:estimate_on_A_tilde} (with $p=2$) to the second term in \eqref{eq:first_pass_argument-lastcase1}, noting that $2 - \alpha > -1$, to get
\begin{align*}
H_T^k \leq C'\left(\frac{k^{1+r} }{n^{2r}} +  \frac{k^{1+r}}{n^{r}} +  \frac{k^{1+r}}{n^r} \right),
\end{align*}
with $C'$ depending only on $(\gamma,\sigma,M,\LSI,\epsilon,  C_0^{\epsilon/2, \epsilon})$.

By evaluating the above for $k=3$, we have $H_T^3 \leq C'/n^r$. 
We may now repeatedly apply Lemma \ref{le:iteration_Lemma}, as in Section \ref{se:pf-part(1)-2}, but this time we never reach the case where $rq_m \ge 2$ because $rq_m \uparrow 2r < 2r_c \le 2$ as $m\to\infty$. Instead, we choose $m$ large enough so that $rq_m \ge 2r - \epsilon$ and deduce that $H^3_T \le C/n^{2r-\epsilon}=C/n^{2r_c-2\epsilon}$, where $C>0$ now depends only on $(\gamma,\sigma,M,\LSI,\epsilon_1,\epsilon_2,  C_0^{\epsilon/2, \epsilon})$. For the rest of this proof, $C>0$ may additionally depend on $C_0^{\epsilon_1, \epsilon_2}$ and may change from line to line, but it will never depend on $(n,k,T)$.

With \eqref{pf:case3-Hk} now established, let us now instead choose $\delta>0$ (depending only on $\sigma, \gamma, \LSI, \epsilon_1$) so that $\alpha=1+r_c-\epsilon_1$, and define $p_1=1 + r_c - \epsilon_1$ and $p_2=2r_c - \epsilon_2$. Then \eqref{eq:first_pass_argument-lastcase1} again holds but with 
$C_0^{\epsilon/2,\epsilon}$ replaced by $C_0^{\epsilon_1,\epsilon_2}$.
In light of \eqref{pf:case3-Hk}, we may apply  Lemma \ref{le:first_entropy_estimate}(2) with $p=2(r_c-(\epsilon_2-\epsilon_1))$ to get
\begin{align*}
H_T^n &\le C n^{1-r_c+\epsilon_2-\epsilon_1}, \\
H^k_T &\le C \frac{k^{1 + r_c - \epsilon_1 } }{n^{2r_c - \epsilon_2}} + \frac{C}{n^2}\sum_{\ell=k}^{n-1} \ell^{2 - r_c + \epsilon_2-\epsilon_1 }\tilde{A}_k^\ell(T) + \tilde{A}_k^{n-1}(T)H_T^n.
\end{align*}
Apply Lemma \ref{le:estimate_on_A_tilde} with $p=2 - r_c + \epsilon_2-\epsilon_1$, noting that then $p-\alpha = 1 - 2r_c + \epsilon_2 > -1$, to get
\begin{equation*}
\frac{1}{n^2}\sum_{\ell=k}^{n-1} \ell^{2 - r_c + \epsilon_2-\epsilon_1 }\tilde{A}_k^\ell(T) \le C \frac{k^{\alpha}}{n^{\alpha+1-p}} = C \frac{k^{1+r_c-\epsilon_1}}{n^{2r_c-\epsilon_2 }}.
\end{equation*}
Lemma \ref{le:estimate_on_A_tilde} also yields
\begin{equation*}
\tilde{A}_k^{n-1}(T)H_T^n \le C \frac{k^\alpha}{n^\alpha}  n^{1-r_c+\epsilon_2-\epsilon_1} = C \frac{k^{1+r_c-\epsilon_1}}{n^{2r_c-\epsilon_2 }}.
\end{equation*}
Put it together to complete the proof of this final case, and thus the theorem. \hfill\qedsymbol

\section{Proof of for reversed entropy}\label{se:sec-proof-reversed-entropy}
In this section we prove Theorem \ref{th:reverse}.
Define the reversed relative entropy
\begin{equation*}
H^k_t := H(\mu^{\otimes k}_t\,|\,P^k_t), \quad t \ge 0, \ 1 \le k \le n.
\end{equation*}
By assumption, $H_0^k \leq C_0 (k/n)^p$ for some $C_0$ and $0 < p \le 2$, with the choice of $p$ depending on which case in Theorem \ref{th:reverse} we are considering. We make use again of the functions $\widehat{b}^k_i$ defined in \eqref{def:bhat^k_i}.
We first apply Lemma \ref{le:entropy-estimate} to compute the time-derivative of $H^k_t$. It is evident that (H.1,2) hold because $b$ is bounded and $b_0$ is locally bounded. Moreover, (H.3) is a direct consequence of \eqref{eq:integ-b_0-rev}.
We thus obtain, for all $t > s \ge 0$,
\begin{align*}
H_t^k - H_s^k + \frac{\sigma^2}{4} \int_s^t I(\mu_u^{\otimes{k}} | P_u^k)du & \leq \frac{1}{\sigma^2}\int_s^t \int_{(\R^d)^k} \sum_{i=1}^k\big|\widehat{b}^k_i(u,x) - \langle \mu_u,b(u,x_i,\cdot)\rangle\big|^2 \mu_u^{\otimes{k}}(dx)du.
\end{align*}
The LSI \eqref{eq:LSI-rev} marginalizes, in the sense that
\begin{align*}
H(\nu \,|\, P_t^k) \leq \LSI  I(\nu \,|\, P_t^k), \quad \forall k=1,\ldots,n, \ \nu \in \P((\R^d)^k), \ t \geq 0.
\end{align*}
Indeed, for a given $k$ and $\nu \in \P((\R^d)^k)$, this follows by applying \eqref{eq:LSI-rev} to the measure $\tilde\nu \in \P((\R^d)^n)$ with density $d\tilde\nu/dP^n_t(x_1,\ldots,x_n) = d\nu/dP^k_t(x_1,\ldots,x_k)$.
This yields
\begin{align}
H_t^k - H_s^k + \frac{\sigma^2}{4\LSI } \int_s^t H_u^k du& \leq \frac{1}{\sigma^2}\int_s^t \int_{(\R^d)^k} \sum_{i=1}^k\big|\widehat{b}^k_i(u,x) - \langle \mu_u,b(u,x_i,\cdot)\rangle\big|^2 \mu_u^{\otimes{k}}(dx)du \nonumber  \\
& = \frac{k}{\sigma^2}\int_s^t \int_{(\R^d)^k} \big|\widehat{b}^k_1(u,x) - \langle \mu_u,b(u,x_1,\cdot)\rangle\big|^2 \mu_u^{\otimes{k}}(dx)du , \label{pf:rev:Hmain} 
\end{align}
where the second line comes from exchangeability. 
Use the definition of $\widehat{b}^k_1$ to write the inside of the time integral of right-hand side as
\begin{align*}
\frac{k}{\sigma^2}\int_{(\R^d)^k}\bigg|\frac{1}{n-1}\sum_{j=2}^k \big( b(t,x_1,x_j) - \langle\mu_t, b(t, x_1, \cdot) \rangle\big) + \frac{n-k}{n-1}\big\langle P^{k+1|k}_{t,x} -  \mu_t, b(t, x_1, \cdot) \big\rangle\bigg|^2 \mu_t^{\otimes{k}}(dx).
\end{align*}
Let $\delta > 0$, to be chosen later, and use the inequality $(x+y)^2 \leq (1+\delta)x^2 + (1 + \delta^{-1})y^2$ to further bound this by
\begin{equation*}
\frac{k}{\sigma^2}(1+\delta^{-1})I + \frac{k}{\sigma^2}(1 + \delta)II,
\end{equation*}
where we set
\begin{equation*}
\begin{split}
I &:= \int_{(\R^d)^k}\bigg|\frac{1}{n-1}\sum_{j=2}^k \big( b(t,x_1,x_j) - \langle\mu_t, b(t, x_1, \cdot) \rangle\big) \bigg|^2\mu_t^{\otimes{k}}(dx), \\
II &:= \int_{(\R^d)^k}\left| \frac{n-k}{n-1}\big \langle P^{k+1|k}_{t,x} -  \mu_t, b(t, x_1, \cdot) \big\rangle\right|^2 \mu_t^{\otimes{k}}(dx),
\end{split}
\end{equation*}
again with the convention that $II=0$ for $k=n$.
Expanding the square in $I$, the cross-terms vanish, and by exchangeability we have 
\begin{align}
I = \frac{k-1}{(n-1)^2} \int_{(\R^d)^2} \big| b(t,x_1,x_2) - \langle\mu_t, b(t, x_1, \cdot) \rangle\big|^2 \mu_t^{\otimes{2}}(dx) \leq \frac{ \gamma (k-1)}{2(n-1)^2}, \label{pf:rev:I}
\end{align}
where we set $\gamma:= 2 \| |b|^2 \|_{\infty}$. We note here for future use that when $k=n$, we have $I \le \gamma/2(n-1)$ and $II=0$, so that \eqref{pf:rev:Hmain} becomes
\begin{equation*}
H_t^n - H_s^n + \frac{\sigma^2}{4\LSI } \int_s^t H_u^n du \leq \frac{\gamma n}{2\sigma^2(n-1)} \leq \frac{ \gamma}{\sigma^2}.
\end{equation*}
Using a form of Gronwall's inequality (see Lemma \ref{le:Gronwall_adapted}), and setting $Z = \sigma^2/4\LSI $, we have 
\begin{align}
H_t^n & \leq e^{-Zt} H_0^n + \int_0^t \frac{ \gamma}{\sigma^2} e^{Z(s-t)}ds  \leq C_0 e^{-Zt} + \frac{4  \gamma \LSI }{\sigma^4}. \label{pf:rev:H^n}
\end{align}
Next, for $k < n$, we bound $II$. Since $b$ is bounded, Pinsker's inequality yields 
\begin{align*}
\big|\big \langle P^{k+1|k}_{t,x} -  \mu_t, b(t, x_1, \cdot) \big\rangle\big|^2 \leq \gamma H(\mu_t \,|\, P^{k+1|k}_{t,x}).
\end{align*}
Using $(n-k)/(n-1) \leq 1$ and the chain rule for relative entropy (as in \eqref{eq:first_bound_drift_second_term}), we get
\begin{align*}
II \leq \gamma \int_{(\R^d)^k} H(\mu_t \,|\, P^{k+1|k}_{t,x})\, \mu_t^{\otimes{k}}(dx) = \gamma  \big(H_t^{k+1} - H_t^k\big).
\end{align*}
Adding this to \eqref{pf:rev:I} and returning to \eqref{pf:rev:Hmain} yields
\begin{align*}
H_t^k - H_s^k \leq \int_s^t \left(\frac{ k(k-1) \gamma(1+\delta^{-1})}{2(n-1)^2\sigma^2} + \tilde{\gamma}k H_u^{k+1} - (Z+\tilde{\gamma}k) H_u^k \right)du.
\end{align*}
where we set $\tilde\gamma = \gamma(1+\delta)/\sigma^2$.
The first term inside the integral is bounded by $\tilde\gamma k^2/\delta n^2$, which follows from using $(k-1)/(n-1) \leq k/n$ and $n\le 2(n-1)$ as well as the identity $\frac{1 + \delta^{-1}}{\sigma^2 } \gamma = \frac{\tilde{\gamma}}{\delta}$. By Gronwall's inequality (as in Lemma \ref{le:Gronwall_adapted}), 
\begin{align*}
H_t^k \leq e^{-(Z+\tilde{\gamma}k) t} H_0^k + \int_0^te^{-(Z+\tilde{\gamma}k) s}\left( \frac{ \tilde\gamma k^2 }{\delta n^2} + \tilde{\gamma}k H_s^{k+1} \right)ds.
\end{align*}
By iterating for $\ell = k, k+1, \ldots, n$ as in the proof of Lemma \ref{le:first_entropy_estimate}, we have
\begin{align*}
H_T^k \leq \sum_{\ell=k}^{n-1} H_0^\ell \tilde{B}_k^\ell(T) +\frac{1}{\delta n^2} \sum_{\ell=k}^{n-1} \ell \tilde{A}_k^\ell(T)+ \tilde{A}_{k}^{n-1}(T) H_T^n,
\end{align*}
where $\tilde{A}_{k}^{\ell}$ and $\tilde{B}_{k}^{\ell}$ (and $Z$ and $\tilde{\gamma}$) are defined as in Lemma \ref{le:first_entropy_estimate}. Plugging in \eqref{pf:rev:H^n} and the assumption $H^k_0 \le C_0(k/n)^p$, we obtain
\begin{align}
H_T^k \leq  \frac{C_0}{n^{p}}\sum_{\ell=k}^{n-1} \ell^{p} \tilde{B}_k^\ell(T) +\frac{1}{\delta n^2} \sum_{\ell=k}^{n-1} \ell \tilde{A}_k^\ell(T)+ \tilde{A}_{k}^{n-1}(T) \left(C_0 e^{-Zt} +  4  \gamma \LSI  \sigma^{-4} \right).
\label{eq:sharpest_estimate_on_rev_H_k}
\end{align}
Let $\alpha = \frac{Z}{\tilde{\gamma}} = \frac{\sigma^4}{4\gamma \LSI  ( 1 + \delta)}$ and apply Lemmas \ref{le:estimate_on_B_tilde} and \ref{le:estimate_on_A_tilde} in \eqref{eq:sharpest_estimate_on_rev_H_k} to get
\begin{align}
H_T^k \leq  \frac{2 C_0 k^{p}}{n^{p}} (1+p)^{p} e^{Z(\frac{p}{\alpha}-1)T} +\frac{1}{\delta n^2} \sum_{\ell=k}^{n-1} \ell \tilde{A}_k^\ell(T)+ (1 + \alpha)^\alpha \frac{k^\alpha}{n^\alpha} \left(C_0+ 4 \gamma \LSI  \sigma^{-4}\right).
\label{eq:last-equation-conclusion}
\end{align}
We now complete the proof in two cases separately.

\subsubsection*{First case: $p_c=\frac{\sigma^4}{4\gamma \LSI } > 2$}
In this case $p = 2$. Choose $\delta>0$ so that $\alpha > 2$. Using Lemma \ref{le:estimate_on_A_tilde},
\begin{align*}
\frac{1}{n^2} \sum_{\ell=k}^{n-1} \ell \tilde{A}_k^\ell(T) \leq \left(1 + \frac{(1 + \alpha)^\alpha}{\alpha -2}\right) \frac{k^2}{n^2}.
\end{align*}
Using this in \eqref{eq:last-equation-conclusion} and noting that $(k/n)^\alpha \le (k/n)^2$ completes the proof.

\subsubsection*{Second case:  $p_c=\frac{\sigma^4}{4\gamma \LSI } \leq 2$}
In this case $p = \frac{\sigma^4}{4 \gamma \LSI }-\epsilon$, where  $\epsilon \in (0, p_c)$. Choose $\delta>0$ so that $\alpha = p$. Since $\alpha<2$, applying Lemma \ref{le:estimate_on_A_tilde}  yields
\begin{align*}
\frac{1}{n^2} \sum_{\ell=k}^{n-1} \ell \tilde{A}_k^\ell(T) \leq \left(1 + \frac{(1 + \alpha)^\alpha}{2- \alpha}\right) \frac{k^\alpha}{n^\alpha}.
\end{align*}
Using this in \eqref{eq:last-equation-conclusion} completes the proof. \hfill \qedsymbol

\section{Proof of Corollaries}\label{se:sec-proof-corollaries}

\subsection{Convex potentials: Proof of Corollary \ref{cor:convex-main}}\label{se:proof_of_first_corollary}

Here $b_0(t,x) = - \nabla U(x)$ and $b(t,x,y) = - \nabla W(x-y)$. We need to check that the conditions of Assumptions \ref{assumption:E} and \ref{assumption:A} hold. Note that the LSI for $\mu_0$ is known to imply that $\mu_0$ has finite moments of every order.

We start with Assumption \ref{assumption:E}.
Assumption (\ref{assumption:E}.1) is immediate by continuity of $(\nabla U,\nabla W)$. Suppose that $L<\infty$. The existence and uniqueness of a weak solution $(\mu_t)_{t \ge 0}$ of the nonlinear Fokker-Planck equation follows from \cite[Theorem 2.6]{Malrieu2008}, and since $\nabla W$ Lipschitz we may use the moment bounds of \cite[Theorem 2.6]{Malrieu2008} to deduce $\nabla W(x-\cdot) \in L^1(\mu_t)$ for all $(t,x)$ as well as the continuity and thus local boundedness of $(t,x) \mapsto \langle \mu_t, \nabla W(x-\cdot)\rangle$. This shows Assumption (\ref{assumption:E}.2). 
The existence of a unique (starting from the given initial law $P^n_0$) solution $(P^n_t)_{t \ge 0}$ to the Fokker-Planck equation required by Assumption (\ref{assumption:E}.3) is justified in \cite[Section 2]{Malrieu2008}. 
Because $\nabla U$ and $\nabla W$ have polynomial growth in $x$, the integrability requirements of Assumption (\ref{assumption:E}.3) are a simple consequences of the moments bounds for $P^n_t$ and $\mu_t$ given in  \cite[Corollary 2.3, Theorem 2.6]{Malrieu2008}, which adapt easily to the case of non-i.i.d.\ initial conditions. If $L=\infty$ but $R<\infty$, we verify Assumption \ref{assumption:E} in a similar  fashion. 

We now turn to checking Assumption \ref{assumption:A}. Using the assumption $\nabla^2 (U + W) \succeq \alpha I$, the (time-inhomogeneous) generator $\varphi \mapsto - \nabla \varphi \cdot \nabla U  - \nabla \varphi \cdot \nabla W * \mu_t + \frac{\sigma^2}{2}\Delta\varphi$ of the Fokker-Planck equation satisfied by $(\mu_t)_{t \ge 0}$ obeys the curvature condition described in \cite[Proposition 3.12]{Malrieu2001} and \cite[Theorem 4.1]{Collet2008}, and we may thus deduce from their arguments (see also \cite[Corollary 3.7]{Malrieu2001}) that $\mu_t$ satisfies a LSI with constant $\frac{\sigma^2}{4\alpha}(1-e^{-4\alpha t/\sigma^2}) + \frac{\LSI _0}{4}e^{-4\alpha t/\sigma^2}$. This shows that Assumption (\ref{assumption:A}.1) holds with $\LSI =\max(\LSI _0/4,\sigma^2/4\alpha)$.
For Assumption (\ref{assumption:A}.2), first notice that if $R<\infty$, Pinsker's inequality gives us the required inequality with $\gamma = 2 R^2$. In the case where $L<\infty$, recall from Remark \ref{re:OttoVillani} that $\mu_t$ obeys the quadratic transport inequality \eqref{eq:OttoVillani}.
Hence, since $\nabla W$ is $L$-Lipschitz, we use   Kantorovich duality to deduce
\begin{align}
\big|\langle \nu - \mu_t, \nabla W(x - \cdot)\rangle \big|^2 \leq L^2 \W_1^2(\mu_t, \nu) \leq L^2 \W_2^2(\mu_t, \nu) \leq 4\LSI L^2  H(\nu \,|\, \mu_t).
\end{align}
This yields Assumption (\ref{assumption:A}.2) with $\gamma=4\LSI L^2$.
We finally check that the constant $M$ of Assumption (\ref{assumption:A}.3) is finite and bounded uniformly in $n$, which here is
\begin{align}
M := \esssup_{t \geq 0} \E\left|\nabla W(X_t^1- X_t^2) - \langle \mu_t, \nabla W(X_t^1-\cdot) \rangle \right|^2.
\label{eq:uniform_second_moment_Malrieu}
\end{align}
The case $R<\infty$ is evident. If $L < \infty$ then $\nabla W$ has linear growth, and we conclude from the second moment bounds of \cite[Corollary 2.3, Proposition 2.7]{Malrieu2008}. \hfill\qedsymbol

\subsection{Models on the torus: Proof of Corollary \ref{co:torus}}

The weak well-posedness of the (linear) Fokker-Planck equation \eqref{def:PDEtorus} is standard, as $K$ is bounded and Lipschitz.
The well-posedness of the SDE \eqref{def:SDEtorus} follows by the standard argument of Sznitman \cite[Chapter I.1]{sznitman1991topics}, since $K$ is Lipschitz. The well-posedness of the PDE \eqref{def:MVPDEtorus} in the weak sense follows by the superposition principle \cite[Theorem 2.6]{figalli2008existence}. The fact that the solution is classical follows from standard results on linear Fokker-Planck equations $\partial_t \mu = -\div(\tilde{K} \mu) + (\sigma^2/2)\Delta \mu$, applied with the bounded Lipschitz drift $\tilde{K} = K * \mu$. Strict positivity follows from Harnack's inequality \cite[Chapter 8]{bogachev-book}.

The following lemma paves the way for a short proof of Corollary \ref{co:torus}, given just after:

\begin{lemma} \label{le:torus}
For each $t > 0$, the density of $\mu_t$ is $C^2$ and obeys the pointwise bound 
\begin{align*}
\frac{1}{\lambda e^{r_0}} \le \mu_t(x) \le \frac{\lambda}{1- {r_0} e^{r_0}}, \quad \text{where} \quad r_0 := \frac{\|\div\, K\|_\infty\sqrt{ 2 \log \lambda}}{\sigma^2\pi^2 - \|\div\, K\|_\infty},
\end{align*}
Moreover, it holds that $r_0 < 1/2$, and  $\mu_t$ satisfies the LSI
\begin{align*}
H(\cdot\,|\,\mu_t) \le \LSI I(\cdot\,|\,\mu_t), \quad \text{where } \ \LSI  := \lambda^2 /(1- 2r_0).
\end{align*}
\end{lemma}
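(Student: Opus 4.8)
The plan is to control the oscillation of $\log\mu_t$ by tracking the evolution of its $L^2(\mu_t)$ (or $H^1$) energy and then combining with Sobolev/Poincaré estimates on the torus. First I would establish the regularity claim: since $K$ is bounded and Lipschitz, the drift $\tilde K_t := K*\mu_t$ is bounded and (as $\mu_t$ is already known to be a smooth positive classical solution) the density $\mu_t$ is $C^2$ in space by standard parabolic regularity for $\partial_t\mu=-\div(\tilde K\mu)+(\sigma^2/2)\Delta\mu$. The heart of the matter is the pointwise upper and lower bounds. Write $g_t := \log\mu_t$. Using the PDE and the identity $\partial_t\mu_t = (\sigma^2/2)\Delta\mu_t - \div(\mu_t\,K*\mu_t)$, one computes an evolution equation for $g_t$, namely
\begin{align*}
\partial_t g_t = \frac{\sigma^2}{2}\big(\Delta g_t + |\nabla g_t|^2\big) - (K*\mu_t)\cdot\nabla g_t - \div(K*\mu_t),
\end{align*}
and note $\div(K*\mu_t) = (\div K)*\mu_t$, so $\|\div(K*\mu_t)\|_\infty \le \|\div K\|_\infty$. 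The strategy is then to derive a differential inequality for a quantity measuring the deviation of $\mu_t$ from the uniform density — the natural candidate is the entropy-like functional $\int_{\T^d}\mu_t\log\mu_t\,dx$ together with, or alternatively, $\|\nabla\sqrt{\mu_t}\|_{L^2}^2$ — using the spectral gap $\pi^2$ of the torus $\T^d=\R^d/\Z^d$ (the first nonzero Neumann eigenvalue of $-\Delta$, which enters through the $\sigma^2\pi^2$ in the definition of $r_0$). The divergence term $(\div K)*\mu_t$ is the only source term, and it is what must be small; the factor $\sqrt{2\log\lambda}$ will enter because the initial datum satisfies $\lambda^{-1}\le\mu_0\le\lambda$, giving $\|\log\mu_0\|_\infty\le\log\lambda$ and an initial $L^2$-type bound of order $\sqrt{\log\lambda}$.

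Concretely, I would show that $\osc(\log\mu_t)$, or more precisely a Gaussian-type logarithmic moment $\int(\log(\mu_t/\bar\mu_t))\,d\mu_t$ controlled via a logarithmic Sobolev inequality on the torus (which holds with an explicit constant since the uniform measure has spectral gap $\pi^2$), decays toward an equilibrium value dictated by $\|\div K\|_\infty/(\sigma^2\pi^2)$. Grönwall then yields a uniform-in-time bound of the form $\|\div K\|_\infty\sqrt{2\log\lambda}/(\sigma^2\pi^2-\|\div K\|_\infty) = r_0$ on the relevant quantity, and translating back through $\mu_t = e^{g_t}$ with $\int\mu_t=1$ produces the stated two-sided bounds: the lower bound $\mu_t\ge 1/(\lambda e^{r_0})$ and upper bound $\mu_t\le\lambda/(1-r_0e^{r_0})$, where the slightly asymmetric form comes from using $\int_{\T^d}\mu_t = 1$ to convert a one-sided deviation bound into a two-sided pointwise bound (one uses $e^{r_0}\ge 1+r_0$ or a similar convexity estimate for the upper bound, which is why the denominator $1-r_0e^{r_0}$ appears rather than $e^{-r_0}$). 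The smallness hypothesis \eqref{asmp:Ksmall}, $\|\div K\|_\infty < \sigma^2\pi^2/(1+2\sqrt{2\log\lambda})$, is exactly what forces $r_0 < 1/2$, hence $1 - r_0e^{r_0} > 0$ (since $r_0e^{r_0} < \tfrac12 e^{1/2} < 1$), so the upper bound is finite and positive.

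Finally, with the uniform pointwise bounds $\Lambda^{-1}\le\mu_t\le\Lambda$ in hand (with $\Lambda = \lambda e^{r_0}$ or the appropriate product of the two bounds), the LSI follows from the Holley–Stroock perturbation lemma \cite[Proposition 5.1.6]{Bakry}: the uniform measure on $\T^d$ satisfies a LSI, and a density bounded above and below away from zero perturbs the LSI constant by at most the ratio $\sup\mu_t/\inf\mu_t$, squared in the convention used here. Tracking the constants, $\sup\mu_t/\inf\mu_t \le \lambda^2 e^{2r_0}/(1-r_0e^{r_0}) \cdot(\text{const})$; the claimed clean value $\LSI = \lambda^2/(1-2r_0)$ should come out after using $e^{2r_0}\le 1/(1-2r_0)$ (valid for $r_0<1/2$) and absorbing the torus's own LSI constant into the normalization. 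The main obstacle I anticipate is getting the constants in the differential inequality sharp enough to land exactly on $r_0$ with the coefficient $\sqrt{2}$ in front of $\sqrt{\log\lambda}$ and the clean spectral constant $\sigma^2\pi^2$ — this requires choosing the right functional to differentiate (most likely $\tfrac12\int|\nabla g_t|^2$-type energy combined with a maximum-principle argument for $\|g_t - \bar g_t\|_\infty$, or a De Giorgi/Moser iteration) and carefully using the torus Poincaré inequality with its optimal constant $\pi^2$; the rest is bookkeeping.
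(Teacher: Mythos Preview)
Your plan diverges substantially from the paper's proof, and I think it has a real gap in the mechanism for producing the precise constant $r_0$ and the pointwise bounds.

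The paper does \emph{not} argue via the evolution of $\log\mu_t$ or energy functionals. Instead it (i) quotes an entropy decay estimate from \cite{carrillo2020long}, namely $H(\mu_t\,|\,\bm{1})\le e^{-2ct}H(\mu_0\,|\,\bm{1})$ with $c=\sigma^2\pi^2-\|\div K\|_\infty$, (ii) uses the cancellation $\div K*\bm{1}\equiv 0$ together with Pinsker to get $\|(\div K)*\mu_u\|_\infty\le\|\div K\|_\infty\|\mu_u-\bm{1}\|_{\mathrm{TV}}\le\|\div K\|_\infty\sqrt{2\log\lambda}\,e^{-cu}$, and (iii) runs a Feynman--Kac argument: for fixed $T,x$ it introduces the time-reversed process $dY_t=-K*\mu_{T-t}(Y_t)\,dt+\sigma\,dB_t$, $Y_0=x$, applies It\^o to $\mu_{T-t}(Y_t)$, and obtains
\[
\E\mu_{T-t}(Y_t)=\mu_T(x)+\E\int_0^t\mu_{T-s}(Y_s)\,(\div K)*\mu_{T-s}(Y_s)\,ds.
\]
Gronwall on $t\mapsto\E\mu_{T-t}(Y_t)$, using the decaying bound from (ii), then directly yields the pointwise upper and lower bounds on $\mu_T(x)$ with the exact constant $r_0=a/c$ where $a=\|\div K\|_\infty\sqrt{2\log\lambda}$.

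Your route misses two of these ingredients. First, you bound $\|(\div K)*\mu_t\|_\infty\le\|\div K\|_\infty$, which is too crude: without the cancellation $\div K*\bm{1}=0$ and the ensuing exponential decay via Pinsker, your source term is constant in time, and the $\sqrt{2\log\lambda}$ factor in $r_0$ has no way to appear as a multiplier on $\|\div K\|_\infty$. You attribute $\sqrt{2\log\lambda}$ to the initial condition entering an $L^2$-type bound, but in the paper it enters in a completely different place---as the Pinsker bound on $\|\mu_t-\bm{1}\|_{\mathrm{TV}}$ controlling the source. A differential inequality $F'\le -cF+\|\div K\|_\infty\cdot(\text{something})$ with constant source does not reproduce $r_0$ as stated. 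Second, even granting good control on an integral functional like $H(\mu_t\,|\,\bm{1})$ or $\|\nabla\sqrt{\mu_t}\|_2^2$, you have no clear passage to \emph{pointwise} bounds on $\mu_t$; you gesture at ``maximum principle'' or ``De Giorgi/Moser,'' but neither produces the clean constants $\lambda e^{r_0}$ and $\lambda/(1-r_0e^{r_0})$ without substantial further work. The Feynman--Kac representation is exactly what converts the time-integrated decay of the source into a pointwise bound, and it is the key idea you are missing.

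The Holley--Stroock step at the end is correct and matches the paper; the paper simplifies the ratio via $e^{r_0}/(1-r_0e^{r_0})\le 1/(1-2r_0)$ for $0\le r_0<1/2$, which is close to what you wrote.
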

\begin{proof}
We note for future use that  \cite[Proposition 3.1]{carrillo2020long} shows
\begin{align}
H(\mu_t\,|\,\bm{1}) \le e^{-2ct}H(\mu_0\,|\,\bm{1}), \ \ \forall t \ge 0, \text{ where } c := \sigma^2\pi^2 - \|\div\, K\|_\infty,  \label{pf:ent-torus1}
\end{align}
and where $\bm{1}$ denotes the uniform measure on $\T^d$.
The precise setting of  \cite[Proposition 3.1]{carrillo2020long} is somewhat different, as they take $K=\nabla W$ as a gradient field and have a sharper estimate involving only the ``unstable part" $\Delta W_u$ in place of $\div \, K$ in the definition of $c$, but their proof works (and becomes slightly simpler) to yield \eqref{pf:ent-torus1}.
We next prove the pointwise bound on $\mu_t$, following an idea from the proof of \cite[Theorem 2]{guillin2021uniform}. Fix $T > 0$ and $x \in \T^d$, and let $(Y_t)_{t \in [0,T]}$ denote the unique strong solution of the SDE
\begin{align*}
dY_t = -K * \mu_{T-t}(Y_t)dt + \sigma dB_t, \quad Y_0=x.
\end{align*}
Using the PDE for $\mu$ and It\^o's formula, then taking expectations,
we have for $t \in [0,T]$
\begin{align}
\E\mu_{T-t}(Y_t) &= \mu_T(x) + \E\int_0^t\mu_{T-s}(Y_s) \div\, K * \mu_{T-s} (Y_s)\,ds. \label{pf:ent-torus2}
\end{align}
Note if $\div\, K \equiv 0$, as in \cite[Theorem 2]{guillin2021uniform}, then this would immediately yield the desired conclusion $\mu_T(x)=\E\mu_0(Y_T) \in [\lambda^{-1},\lambda]$. In general, we need an additional argument. Noting that $\div\, K * \bm{1} \equiv 0$, we have for any $u \in [0,T]$ that
\begin{align*}
\|\div\, K * \mu_u\|_\infty &\le \|\div\, K * (\mu_u-\bm{1})\|_\infty \le \|\div\, K \|_\infty \| \mu_u-\bm{1} \|_{\mathrm{TV}} \le \|\div\, K \|_\infty \sqrt{2H(\mu_u\,|\,\bm{1})} \\
	&\le \sqrt{2  \log \lambda } \|\div\, K \|_\infty e^{-cu},
\end{align*}
with the last step using \eqref{pf:ent-torus1} and $H(\mu_u\,|\,\bm{1}) \le \log \lambda$. Setting $a=\sqrt{2 \log \lambda} \|\div\, K \|_\infty$, this implies
\begin{align*}
\E\mu_{T-t}(Y_t) &\le \mu_T(x) +  a\int_0^t e^{-cs}\E\mu_{T-s}(Y_s)\,ds.
\end{align*}
By Gronwall's inequality, we obtain  for all $t \in [0,T]$
\begin{align*}
\E\mu_{T-t}(Y_t) \le \mu_T(x)\exp\bigg(a\int_0^t e^{-cs}ds\bigg) \le \mu_T(x) e^{a/c}.
\end{align*}
Setting $t=T$ yields $\mu_T(x) \ge e^{-a/c}\E\mu_0(Y_T) \ge e^{-a/c}\lambda^{-1}$, as well as by \eqref{pf:ent-torus2}
\begin{align*}
\mu_T(x) &\le \E\mu_{0}(Y_T) + a \E\int_0^T e^{-cs}\mu_{T-s}(Y_s)  \,ds  \le \E\mu_{0}(Y_T) + \frac{a}{c} \mu_T(x) e^{a/c}.
\end{align*}
Combining the last two inequalities, we find
\begin{align*}
e^{-a/c}\lambda^{-1} \le \mu_T(x) \le \E\mu_0(Y_T) (1-(a/c)e^{a/c})^{-1} \le \lambda (1-(a/c)e^{a/c})^{-1}.
\end{align*}
This yields the claimed bounds on the density $\mu_t$.
The assumption \eqref{asmp:Ksmall} ensures that $r_0 < 1/2$.
Finally, the claimed LSI follows from the Holley-Stroock argument \cite[Proposition 5.1.6]{Bakry},  after noting that  $\sup\mu_t/ \inf \mu_t \le  \lambda^2 e^{r_0}/(1-r_0 e^{r_0}) \le \lambda^2/(1-2r_0)$ since $0 \le r_0 < 1/2$.
\end{proof}

\begin{proof}[Proof of Corollary \ref{co:torus}]
Note that Lemma \ref{le:torus} justifies the LSI. By Pinsker's inequality, for any $y \in \T^d$ we have 
\begin{align*}
\big|\langle \nu - \mu_t, K (x-\cdot)\rangle \big|^2 = \big|\langle \nu - \mu_t, K (x-\cdot) - y\rangle \big|^2 \le 2\||K - y|^2\|_\infty H(\nu\,|\,\mu_t).
\end{align*}
so we may take
\begin{align*}
\gamma = 2\inf_{y \in \T^d}\||K-y|^2\|_\infty \le \frac12\sup_{x,z \in \T^d}|K(x)-K(z)|^2 =  \frac12\mathrm{diam}^2(K).
\end{align*}
Then $r_c = \frac{\sigma^4}{4\gamma \LSI } - 1$ simplifies to the claimed value.
\end{proof}

\appendix

\section{A form of Gronwall's inequality}

\begin{lemma}\label{le:Gronwall_adapted}
Let $c > 0$. Let $(g_t)_{t\geq 0}$ be a non-negative measurable  function, integrable on bounded sets. 
Let $(H_t)_{t\geq 0}$ be a non-negative measurable function such that $H_0 < \infty$. Suppose that
\begin{equation}
H_t - H_s \leq \int_s^t \left( g_u - c H_u \right) du, \quad  \text{for all } t \ge s \ge 0.
\label{pf:one_sided_entropy_estimate_Gronwall}
\end{equation}
Then 
\begin{equation}
H_t \leq e^{-ct} H_0 + \int_0^t e^{-c(t-u)} g_u du, \quad \text{for all } t \ge 0.
\label{pf:result_one_sided_entropy_estimate_Gronwall}
\end{equation}
\end{lemma}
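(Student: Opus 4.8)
The plan is to convert the one-sided differential inequality \eqref{pf:one_sided_entropy_estimate_Gronwall} into a genuine linear first-order ODE for the primitive $G(t):=\int_0^t H_u\,du$, solve it by the integrating factor $e^{ct}$, and then use the monotonicity hidden in the hypothesis to extract the sharp $e^{-ct}$ decay. First I would observe that $H$ is locally bounded: taking $s=0$ in \eqref{pf:one_sided_entropy_estimate_Gronwall} and discarding the non-negative term $-cH_u$ gives $H_t\le H_0+\int_0^t g_u\,du$, so $0\le H_t\le H_0+\int_0^T g_u\,du<\infty$ on $[0,T]$. Hence $H$ is locally integrable, $G$ is locally absolutely continuous with $G(0)=0$ and $G'=H$ a.e.

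Next I would introduce $\Phi(t):=H_t+cG(t)-\int_0^t g_u\,du$. The hypothesis \eqref{pf:one_sided_entropy_estimate_Gronwall} says precisely that $\Phi(t)-\Phi(s)=(H_t-H_s)-\int_s^t(g_u-cH_u)\,du\le 0$, i.e.\ $\Phi$ is non-increasing; in particular $\Phi(t)\le\Phi(0)=H_0$ for all $t\ge 0$, and $\Phi(s)\ge\Phi(t)$ whenever $0\le s\le t$. Setting $R(t):=\Phi(t)+\int_0^t g_u\,du=H_t+cG(t)$, which is measurable and locally bounded, we have $G'+cG=R$ a.e., so $(e^{ct}G)'=e^{ct}R$ a.e.; since $e^{ct}G$ is locally absolutely continuous and vanishes at $0$, integration gives
\[
G(t)=\int_0^t e^{-c(t-s)}R(s)\,ds .
\]

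Substituting this into the identity $H_t=\Phi(t)-cG(t)+\int_0^t g_u\,du$ and replacing $R(s)=\Phi(s)+\int_0^s g_u\,du$ yields
\[
H_t=\Big(\Phi(t)-c\!\int_0^t e^{-c(t-s)}\Phi(s)\,ds\Big)+\Big(\int_0^t g_u\,du-c\!\int_0^t e^{-c(t-s)}\!\!\int_0^s g_u\,du\,ds\Big).
\]
By Tonelli's theorem, $c\int_0^t e^{-c(t-s)}\int_0^s g_u\,du\,ds=\int_0^t g_u\,(1-e^{-c(t-u)})\,du$, so the second bracket collapses to $\int_0^t e^{-c(t-u)}g_u\,du$. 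For the first bracket, since $\Phi(s)\ge\Phi(t)$ on $[0,t]$ and $e^{-c(t-s)}>0$ we get $\int_0^t e^{-c(t-s)}\Phi(s)\,ds\ge\Phi(t)\frac{1-e^{-ct}}{c}$, whence the first bracket is at most $\Phi(t)e^{-ct}\le H_0 e^{-ct}$ (using $\Phi(t)\le H_0$ and $H_0\ge 0$, treating $\Phi(t)<0$ trivially). Adding the two bounds gives \eqref{pf:result_one_sided_entropy_estimate_Gronwall}.

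The only delicate points are the measure-theoretic bookkeeping—that $G$ and $e^{ct}G$ are absolutely continuous with the expected a.e.\ derivatives, which is needed because $H$ is only assumed measurable and could jump downward—and the observation that it is the \emph{monotonicity} of $\Phi$, not merely the bound $\Phi\le H_0$, that produces the exponential factor; everything else is a routine Fubini computation.
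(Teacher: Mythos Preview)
Your proof is correct and takes a genuinely different route from the paper's. The paper argues by approximation: it mollifies $g$ and $H$, applies the classical differential Gronwall inequality to the smooth mollifications, passes to the limit to obtain \eqref{pf:result_one_sided_entropy_estimate_Gronwall} for almost every $t$, and then upgrades to every $t$ via the upper semicontinuity $\limsup_{t\to s}H_t\le H_s$ implied by \eqref{pf:one_sided_entropy_estimate_Gronwall}. Your argument is direct: you encode the one-sided hypothesis as the monotonicity of $\Phi$, solve the exact ODE for the primitive $G$ via the integrating factor, and then exploit $\Phi(s)\ge\Phi(t)$ on $[0,t]$ to extract the $e^{-ct}$ factor. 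This is more elementary in that it avoids mollification and the black-box differential Gronwall, and it yields the inequality for \emph{every} $t$ in one stroke without a separate upper-semicontinuity step. The paper's version is terser and leans on standard tools; yours is self-contained and makes transparent exactly where the decay comes from (the monotonicity of $\Phi$, as you note). One minor remark: the parenthetical ``treating $\Phi(t)<0$ trivially'' is unnecessary, since $\Phi(t)\le\Phi(0)=H_0$ and $e^{-ct}>0$ already give $\Phi(t)e^{-ct}\le H_0e^{-ct}$ regardless of sign.
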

\begin{proof}
Mollify $g$ and $H$ by a common smooth probability density of compact support. The mollifications satisfy \eqref{pf:one_sided_entropy_estimate_Gronwall}. Applying the usual differential Gronwall's inequality leads to \eqref{pf:result_one_sided_entropy_estimate_Gronwall} for the mollifications. Taking $L^1$ limits to remove the mollification leads to the original claim \eqref{pf:result_one_sided_entropy_estimate_Gronwall}, at least for \emph{almost every} $t$. But the inequality \eqref{pf:one_sided_entropy_estimate_Gronwall} implies that $\limsup_{t \to s}H_t \le H_s$ for every $s \ge 0$, and the claim is thus valid for \emph{every} $t \ge 0$.
\end{proof}

\bibliographystyle{amsplain}
\bibliography{quantsapproach-bib}

\providecommand{\bysame}{\leavevmode\hbox to3em{\hrulefill}\thinspace}
\providecommand{\MR}{\relax\ifhmode\unskip\space\fi MR }
\providecommand{\MRhref}[2]{%
  \href{http://www.ams.org/mathscinet-getitem?mr=#1}{#2}
}
\providecommand{\href}[2]{#2}
\begin{thebibliography}{10}

\bibitem{acebron2005kuramoto}
J.A. Acebr{\'o}n, L.L. Bonilla, C.J.P. Vicente, F.~Ritort, and R.~Spigler,
  \emph{The {K}uramoto model: A simple paradigm for synchronization phenomena},
  Reviews of modern physics \textbf{77} (2005), no.~1, 137.

\bibitem{arnaudon2020second}
M.~Arnaudon and P.~{Del Moral}, \emph{A second order analysis of
  {M}c{K}ean--{V}lasov semigroups}, The Annals of Applied Probability
  \textbf{30} (2020), no.~6, 2613--2664.

\bibitem{Bakry}
D.~Bakry, I.~Gentil, and M.~Ledoux, \emph{Analysis and geometry of markov
  diffusion operators}, Springer, 2014.

\bibitem{bertini2010dynamical}
L.~Bertini, G.~Giacomin, and K.~Pakdaman, \emph{Dynamical aspects of mean field
  plane rotators and the {K}uramoto model}, Journal of Statistical Physics
  \textbf{138} (2010), no.~1, 270--290.

\bibitem{bogachev-book}
V.I. Bogachev, N.V. Krylov, M.~R{\"o}ckner, and S.V. Shaposhnikov,
  \emph{Fokker-{P}lanck-{K}olmogorov {E}quations}, vol. 207, American
  Mathematical Soc., 2015.

\bibitem{Bogachev2016}
V.I. Bogachev, M.~Röckner, and S.V. Shaposhnikov, \emph{Distances between
  transition probabilities of diffusions and applications to nonlinear
  {F}okker–{P}lanck–{K}olmogorov equations}, Journal of Functional Analysis
  \textbf{271} (2016), no.~5, 1262--1300.

\bibitem{bresch2022new}
D.~Bresch, P.-E. Jabin, and J.~Soler, \emph{A new approach to the mean-field
  limit of {V}lasov-{F}okker-{P}lanck equations}, arXiv preprint
  arXiv:2203.15747 (2022).

\bibitem{bresch2020mean}
D.~Bresch, P.-E. Jabin, and Z.~Wang, \emph{Mean-field limit and quantitative
  estimates with singular attractive kernels}, arXiv preprint arXiv:2011.08022
  (2020).

\bibitem{brunick2013mimicking}
B.~Brunick and S.~Shreve, \emph{Mimicking an {I}t{\^o} process by a solution of
  a stochastic differential equation}, The Annals of Applied Probability
  \textbf{23} (2013), no.~4, 1584--1628.

\bibitem{carrillo2020long}
J.A. Carrillo, R.S. Gvalani, G.A. Pavliotis, and A.~Schlichting,
  \emph{Long-time behaviour and phase transitions for the {M}c{K}ean--{V}lasov
  equation on the torus}, Archive for Rational Mechanics and Analysis
  \textbf{235} (2020), no.~1, 635--690.

\bibitem{Malrieu2008}
P.~Cattiaux, A.~Guillin, and F.~Malrieu, \emph{Probabilistic approach for
  granular media equations in the non-uniformly convex case}, Probability
  Theory and Related Fields \textbf{140} (2008), 19--40.

\bibitem{ChaintronDiezII}
L.-P. Chaintron and A.~Diez, \emph{Propagation of chaos: a review of models,
  methods and applications. {II}. {A}pplications}, arXiv preprint
  arXiv:2106.14812 (2021).

\bibitem{ChaintronDiezI}
\bysame, \emph{Propagation of chaos: a review of models, methods and
  applications. {I}. {M}odels and methods}, arXiv preprint arXiv:2203.00446
  (2022).

\bibitem{Bach2018}
L.~Chizat and F.~Bach, \emph{On the global convergence of gradient descent for
  over-parameterized models using optimal transport}, Proceedings of the 32nd
  International Conference on Neural Information Processing Systems (2018),
  3040–3050.

\bibitem{Collet2008}
J.-F. Collet and F.~Malrieu, \emph{Logarithmic {S}obolev inequalities for
  inhomogeneous {M}arkov semigroups}, ESAIM: Probability and Statistics
  \textbf{12} (2008), 492--504.

\bibitem{dawson1983critical}
D.A. Dawson, \emph{Critical dynamics and fluctuations for a mean-field model of
  cooperative behavior}, Journal of Statistical Physics \textbf{31} (1983),
  no.~1, 29--85.

\bibitem{Delarue2021}
F.~Delarue and A.~Tse, \emph{Uniform in time weak propagation of chaos on the
  torus}, arXiv (2021).

\bibitem{delgadino2021phase}
M.G. Delgadino, R.S. Gvalani, G.A. Pavliotis, and S.A. Smith, \emph{Phase
  transitions, logarithmic {S}obolev inequalities, and uniform-in-time
  propagation of chaos for weakly interacting diffusions}, arXiv preprint
  arXiv:2112.06304 (2021).

\bibitem{diaconis1980finite}
P.~Diaconis and D.~Freedman, \emph{Finite exchangeable sequences}, The Annals
  of Probability (1980), 745--764.

\bibitem{Durmus2020}
A.~Durmus, A.~Eberle, A.~Guillin, and R.~Zimmer, \emph{An elementary approach
  to uniform in time propagation of chaos}, Proc. Amer. Math. Soc. \textbf{148}
  (2020), 5387--5398.

\bibitem{figalli2008existence}
A.~Figalli, \emph{Existence and uniqueness of martingale solutions for {SDE}s
  with rough or degenerate coefficients}, Journal of Functional Analysis
  \textbf{254} (2008), no.~1, 109--153.

\bibitem{Gautschi1959}
W.~Gautschi, \emph{Some elementary inequalities relating to the gamma and
  incomplete gamma function}, J. Math. and Phys. \textbf{38} (1959), 77--81.

\bibitem{golse2016dynamics}
F.~Golse, \emph{On the dynamics of large particle systems in the mean field
  limit}, Macroscopic and large scale phenomena: coarse graining, mean field
  limits and ergodicity, Springer, 2016, pp.~1--144.

\bibitem{CLeonard2009}
N.~Gozlan and C.~L\'eonard, \emph{Transport inequalities. a survey}, Markov
  Processes and Related Fields \textbf{16} (2010), 635--736.

\bibitem{guillin2021uniform}
A.~Guillin, P.~{Le Bris}, and P.~Monmarch{\'e}, \emph{Uniform in time
  propagation of chaos for the 2{D} vortex model and other singular stochastic
  systems}, arXiv preprint arXiv:2108.08675 (2021).

\bibitem{guillin2019uniform}
A.~Guillin, W.~Liu, L.~Wu, and C.~Zhang, \emph{Uniform poincar\'{e} and
  logarithmic {S}obolev inequalities for mean field particles systems}, arXiv
  preprint arXiv:1909.07051 (2019).

\bibitem{GuillinKinetic2021}
A.~Guillin and P.Monmarche, \emph{Uniform long-time and propagation of chaos
  estimates for mean field kinetic particles in non-convex landscapes}, Journal
  of Statistical Physics \textbf{185} (2021), no.~15.

\bibitem{han2021class}
J.~Han, R.~Hu, and J.~Long, \emph{A class of dimensionality-free metrics for
  the convergence of empirical measures}, arXiv preprint arXiv:2104.12036
  (2021).

\bibitem{Han2022}
Y.~Han, \emph{Entropic propagation of chaos for mean field diffusion with
  $l_t^q-l_x^p$ interaction, linear growth interaction and fractional noise},
  arXiv preprint https://arxiv.org/abs/2205.02772 (2022).

\bibitem{herrmann2010non}
S.~Herrmann and J.~Tugaut, \emph{Non-uniqueness of stationary measures for
  self-stabilizing processes}, Stochastic Processes and their Applications
  \textbf{120} (2010), no.~7, 1215--1246.

\bibitem{jabin2021mean}
P.-E. Jabin, D.~Poyato, and J.~Soler, \emph{Mean-field limit of
  non-exchangeable systems}, arXiv preprint arXiv:2112.15406 (2021).

\bibitem{jabin-wang-bounded}
P.-E. Jabin and Z.~Wang, \emph{Mean field limit and propagation of chaos for
  {V}lasov systems with bounded forces}, Journal of Functional Analysis
  \textbf{271} (2016), no.~12, 3588--3627.

\bibitem{jabinwang-review}
\bysame, \emph{Mean field limit for stochastic particle systems}, Active
  Particles, Volume 1, Springer, 2017, pp.~379--402.

\bibitem{jabin-wang-W1inf}
\bysame, \emph{Quantitative estimates of propagation of chaos for stochastic
  systems with ${W}^{-1,\infty}$ kernels}, Inventiones mathematicae
  \textbf{214} (2018), no.~1, 523--591.

\bibitem{Jameson2013}
G.J.O. Jameson, \emph{Inequalities for gamma function ratios}, American Math.
  Monthly \textbf{120} (2013), 936–940.

\bibitem{kac1956foundations}
M.~Kac, \emph{Foundations of kinetic theory}, Proceedings of The third Berkeley
  symposium on mathematical statistics and probability, vol.~3, University of
  California Press Berkeley and Los Angeles, California, 1956, pp.~171--197.

\bibitem{LackerFoundation}
D.~Lacker, \emph{Hierarchies, entropy, and quantitative propagation of chaos
  for mean field diffusions}, Preprint (2021).

\bibitem{LackerGibbs}
\bysame, \emph{Quantitative approximate independence for continuous mean field
  gibbs measures}, Preprint (2021).

\bibitem{Malrieu2001}
F.~Malrieu, \emph{Logarithmic sobolev inequalities for some nonlinear {PDE}'s},
  Stochastic Processes and their Applications \textbf{95} (2001), 109--132.

\bibitem{Malrieu2003}
\bysame, \emph{Convergence to equilibrium for granular media equations and
  their euler schemes}, Ann. Appl. Probab \textbf{13} (2003), no.~2, 540--560.

\bibitem{mckean1967propagation}
H.P. McKean, \emph{Propagation of chaos for a class of non-linear parabolic
  equations}, Stochastic Differential Equations (Lecture Series in Differential
  Equations, Session 7, Catholic Univ., 1967) (1967), 41--57.

\bibitem{Mei2018}
S.~Mei, A.~Montanari, and P.-M. Nguyen, \emph{A mean field view of the
  landscape of two-layer neural networks}, Proceedings of the National Academy
  of Sciences of the United States of America \textbf{115} (2018), no.~33.

\bibitem{Moral2019}
P.~Del Moral and J.~Tugaut, \emph{Uniform propagation of chaos and creation of
  chaos for a class of nonlinear diffusions}, Stochastic Analysis And
  Applications \textbf{37} (2019), no.~6, 909--935.

\bibitem{OttoVillani}
F.~Otto and C.~Villani, \emph{Generalization of an inequality by {T}alagrand
  and links with the logarithmic {S}obolev inequality}, Journal of Functional
  Analysis \textbf{173} (2000), no.~2, 361--400.

\bibitem{rosenzweig2021global}
M.~Rosenzweig and S.~Serfaty, \emph{Global-in-time mean-field convergence for
  singular {R}iesz-type diffusive flows}, arXiv preprint arXiv:2108.09878
  (2021).

\bibitem{salem2018gradient}
S.~Salem, \emph{A gradient flow approach of uniform in time propagation of
  chaos for particles in double a well confinement}, arXiv preprint
  arXiv:1810.08946 (2018).

\bibitem{salem2020gradient}
\bysame, \emph{A gradient flow approach of propagation of chaos}, Discrete \&
  Continuous Dynamical Systems \textbf{40} (2020), no.~10, 5729.

\bibitem{Salhi2017}
J.~Salhi, J.~MacLaurin, and S.~Toumi, \emph{On uniform propagation of chaos},
  Stochastics \textbf{90} (2013), no.~1, 49--60.

\bibitem{Sirignano2020}
J.~Sirignano and K.~Spiliopoulos, \emph{Mean field analysis of neural networks:
  A law of large numbers}, SIAM Journal on Applied Mathematics \textbf{80}
  (2020), no.~2.

\bibitem{sznitman1991topics}
A.-S. Sznitman, \emph{Topics in propagation of chaos}, Ecole d'{\'e}t{\'e} de
  probabilit{\'e}s de Saint-Flour XIX—1989, Springer, 1991, pp.~165--251.

\end{thebibliography}

\end{document}